\definecolor{dgreen}{RGB}{0,180,0}
\theoremstyle{plain}
\newtheorem{thm}{Theorem}[section]
\newtheorem{lem}[thm]{Lemma}
\newtheorem{prop}[thm]{Proposition}
\def\@rst #1 #2other{#1}
\newcommand\MR[1]{\relax\ifhmode\unskip\spacefactor3000 \space\fi
  \MRhref{\expandafter\@rst #1 other}{#1}}
\newcommand{\MRhref}[2]{\href{http://www.ams.org/mathscinet-getitem?mr=#1}{MR#2}}
\theoremstyle{definition}
\newtheorem{defn}[thm]{Definition}
\newtheorem{remark}[thm]{Remark}
\numberwithin{equation}{section}
\newcommand{\dsb}{\begin{adjustwidth}{2.5em}{0pt}
\begin{footnotesize}}
\newcommand{\dse}{\end{footnotesize}
\end{adjustwidth}}
\newcommand{\ssb}{\begin{adjustwidth}{2.5em}{0pt}}
\newcommand{\sse}{\end{adjustwidth}}
\newcommand{\aryb}{\begin{eqnarray*}}
\newcommand{\arye}{\end{eqnarray*}}
\def\alb#1\ale{\begin{align*}#1\end{align*}}
\def\allb#1\alle{\begin{align}#1\end{align}}
\newcommand{\eqb}{\begin{equation}}
\newcommand{\eqe}{\end{equation}}
\newcommand{\eqbn}{\begin{equation*}}
\newcommand{\eqen}{\end{equation*}}
\newcommand{\BB}{\mathbbm}
\newcommand{\ol}{\overline}
\newcommand{\ul}{\underline}
\newcommand{\op}{\operatorname}
\newcommand{\frk}{\mathfrak}
\newcommand{\eqD}{\overset{d}{=}}
\newcommand{\ep}{\epsilon}
\newcommand{\rta}{\rightarrow}
\newcommand{\wt}{\widetilde}
\newcommand{\wh}{\widehat} 
\newcommand{\mcl}{\mathcal}
\newcommand{\bdy}{\partial}
\newcommand{\rng}{\mathring}
\newcommand{\indshift}{\theta}
\newcommand{\SLE}{\mathrm{SLE}}
\let\originalleft\left
\let\originalright\right
\renewcommand{\left}{\mathopen{}\mathclose\bgroup\originalleft}
\renewcommand{\right}{\aftergroup\egroup\originalright}
\title{The Tutte embedding of the mated-CRT map converges to Liouville quantum gravity}
\date{  }
\author{
\begin{tabular}{c} Ewain Gwynne\\[-5pt]\small Cambridge \end{tabular}
\begin{tabular}{c} Jason Miller\\[-5pt]\small Cambridge \end{tabular}
\begin{tabular}{c} Scott Sheffield\\[-5pt]\small MIT \end{tabular}
}
\begin{document}

\maketitle

\begin{abstract}
We prove that the Tutte embeddings (a.k.a.\ harmonic/barycentric embeddings) of certain random planar maps converge to $\gamma$-Liouville quantum gravity ($\gamma$-LQG).  
Specifically, we treat mated-CRT maps, which are discretized matings of correlated continuum random trees, and $\gamma$ ranges from $0$ to $2$ as one varies the correlation parameter. We also show that the associated space-filling path on the embedded map converges to space-filling SLE$_{\kappa}$ for $\kappa =16/\gamma^2$ (in the annealed sense) and that simple random walk on the embedded map converges to Brownian motion (in the quenched sense).

This work constitutes the first proof that a discrete conformal embedding of a random planar map converges to LQG. Many more such statements have been conjectured. Since the mated-CRT map can be viewed as a coarse-grained approximation to other random planar maps (the UIPT, tree-weighted maps, bipolar-oriented maps, etc.), our results indicate a potential approach for proving that embeddings of these maps converge to LQG as well.

To prove the main result, we establish several (independently interesting) theorems about LQG surfaces decorated by space-filling SLE. There is a natural way to use the SLE curve to divide the plane into ``cells'' corresponding to vertices of the mated-CRT map.  We study the law of the {\em shape} of the origin-containing cell, in particular proving moments for the ratio of its squared diameter to its area. We also give bounds on the degree of the origin-containing cell and establish a form of ergodicity for the entire configuration. Ultimately, we use these properties to show (with the help of a general theorem proved in a separate paper) that random walk on these cells converges to a time change of Brownian motion, which in turn leads to the Tutte embedding result.
\end{abstract}


\tableofcontents

\section{Introduction}
\label{sec-intro}

\subsection{Overview and main results}
\label{sec-overview}

Since at least the 1980's, physicists have interpreted random planar maps as approximations of continuum random surfaces. We will not survey the (vast) physics literature on this topic here; see the reference overview in \cite{shef-kpz} or \cite{nakayama-lqg}.
 The continuum random surfaces we have in mind are known as {\bf Liouville quantum gravity} (LQG) surfaces, and their local behavior is characterized by a real parameter $\gamma \in (0,2]$ that in some sense determines the ``roughness'' of the surface.  Several precise scaling limit conjectures involving random planar maps and LQG surfaces were formulated in~\cite{shef-kpz,shef-zipper,wedges,hrv-disk,curien-glimpse}.
 
Moreover, there are now several senses in which random planar maps have been {\em proved} to converge to LQG surfaces: as metric spaces~\cite{legall-uniqueness,miermont-brownian-map,lqg-tbm1,lqg-tbm2,lqg-tbm3}, as path-decorated metric spaces \cite{gwynne-miller-saw,gwynne-miller-perc}, as mated pairs of trees~\cite{shef-burger,kmsw-bipolar,gkmw-burger,bhs-site-perc,lsw-schnyder-wood}, as collections of loops~\cite{bhs-site-perc}, etc. However, conspicuously absent from the results obtained thus far is a proof of any of the scaling limit conjectures involving {\em discrete conformal embeddings}.

One way to formulate such a conjecture is as follows. Imagine that one samples a random planar map with $n$ vertices (or $n$ faces) and applies a ``discrete conformal embedding'' that sends each vertex to a point in $\BB C$. The embedding induces a measure on $\BB C$ in which each embedded vertex is assigned mass $1/n$.  The conjecture is that as $n \to \infty$, the measure should converge in law to an LQG measure on the disk as constructed in~\cite{shef-kpz} (or~\cite{rhodes-vargas-log-kpz}). Furthermore, the simple random walk on the embedded map should converge in law to a two dimensional Brownian motion independent from the measure, modulo time parameterization. 

Attempts to formulate such a conjecture in a canonical way are complicated by the fact that there are actually several ``discrete conformal embeddings'' that can be claimed to be canonical in some sense.
\begin{enumerate}
\item {\bf Circle packings} (see the overview in \cite{stephenson-circle-packing}).
\item {\bf Square tilings} via the {\bf Smith diagram}~\cite{brooks-dissection} or Cannon's {\bf combinatorial Riemann mapping theorem}~\cite{cannon-riemann-mapping} or Schramm's {\bf combinatorial square tiling}~\cite{schramm-square-tiling}.
\item {\bf Riemann uniformizations} of the continuum surface obtained by viewing faces as unit-side-length polygons, glued together along edges.
\item {\bf Tutte embeddings} (a.k.a.\ {\bf harmonic embeddings} or {\bf barycentric embeddings}).
\end{enumerate}

There are also several kinds of random planar maps that are known (in at least {\em some} sense) to scale to $\gamma$-LQG  for $\gamma \in (0,2)$.

\begin{enumerate}
\item {\bf Uniform random triangulations} (or quadrangulations, or planar maps with other face sizes).
\item {\bf Randomly {\em decorated} planar maps} of various types, where the decoration may consist of a spanning tree, an Ising or Potts model instance, a self-avoiding walk, an FK-cluster decomposition, a bipolar orientation, a Schynder wood, a Gaussian free field, or an instance of some other statistical physics model.
\item {\bf Mated-CRT maps}, which are discretized matings of continuum random trees~\cite{aldous-crt1,aldous-crt2,aldous-crt3} and which we will define precisely below.
\end{enumerate}

Given $M$ distinct embedding procedures and $N$ distinct random planar map models, one can formulate $MN$ conjectures, each stating that the discrete conformal embedding of the map converges in law to some form of LQG. 
The aim of this paper is to completely solve {\em one} of these conjectures. 
Specifically, we treat the \emph{Tutte embedding} of the mated-CRT map. Mated-CRT maps are defined just below and illustrated in Figure~\ref{fig::map}. These maps are a natural place to start when proving embedding convergence results, for the following reasons.
\begin{enumerate}
\item They are among the easiest random planar map models to define and simulate.
\item They come with a parameter (the correlation of the trees) which one can vary to make the limiting surface $\gamma$-LQG for any $\gamma \in (0,2)$.
\item They each come \emph{a priori} with an embedding into $\BB C$ arising from the theory developed in~\cite{wedges,sphere-constructions,ag-disk} which allows us to reduce the convergence of the Tutte embedding to a quenched invariance principle for random walk in a certain highly inhomogeneous random environment (see Section~\ref{sec-approach} just below for further discussion of this). 
\item They can be understood as coarse-grained approximations of {\em any one} of the other models listed above.\footnote{Rougly speaking, this is because a sample from one of these models can be represented as a planar map obtained by ``gluing together'' two {\em discrete} random trees. The contour functions of the trees together form a two dimensional random walk --- the scaling limit of which is a two dimensional Brownian motion with some diffusion matrix, which in turn encodes a mated pair of continuum random trees. (This is what the authors have called {\em peanosphere convergence}~\cite{mullin-maps,shef-burger,kmsw-bipolar,gkmw-burger,lsw-schnyder-wood}). 
On both the discrete and continuum sides, the ``interface'' between the two trees is a sort of ``space-filling path'' and one can define a ``cell'' to be a region traversed by this path during a fixed interval of time. If one couples the discrete two dimensional walk with the continuum two dimensional Brownian motion, so that they are close with high probability, then one can show that discrete cells closely approximate their continuum analogs (in the sense that the adjacency relation on the set of discrete cells agrees with the continuum analog with high probability), which in turn correspond to a mated-CRT map. Such coarse-graining ideas are used in~\cite{ghs-map-dist,gm-spec-dim,gh-displacement,dg-lqg-dim} to transfer estimates for graph distances and random walk on mated-CRT maps to some of these other random planar map models.}  As we discuss in Section~\ref{sec-extensions}, it is possible that the results of this paper will lead to scaling limit theorems for the Tutte embeddings of these other planar map models as well.
\end{enumerate}

\subsubsection{Mated-CRT maps}

We now explain the construction of the mated-CRT map. Fix $\gamma \in (0,2)$ and consider a two dimensional Brownian motion $(L,R)$ with covariance matrix chosen so that 
\eqb \label{eqn-bm-cov}
\op{Var}(L_t) = \op{Var}(R_t) = |t| \quad \op{and} \quad  \op{Cov}(L_t,R_t) = -\cos\left(\frac{\pi\gamma^2}{4} \right) |t| , 
\eqe
 or some minor variant thereof (this correlation ranges from $-1$ to $1$ as $\gamma$ ranges from $0$ to $2$). For concreteness, we consider the setting shown in Figure~\ref{fig::map}, which is a Brownian motion as in~\eqref{eqn-bm-cov} started from $(0,0)$, run for one unit of time, and conditioned to stay in the positive quadrant and end up at $(1,0)$. Such a conditioned Brownian motion is defined via a limiting procedure in~\cite{sphere-constructions}, building on results of~\cite{shimura-cone}.
This Brownian excursion corresponds to a disk version of the mated-CRT map (mated-CRT maps with other topologies are defined in Section~\ref{sec-mated-crt-map}). 

\begin{figure}
\begin{center}

\begin{tikzpicture}
\node (img) {\reflectbox{\includegraphics[width=0.44\textwidth,angle =90]{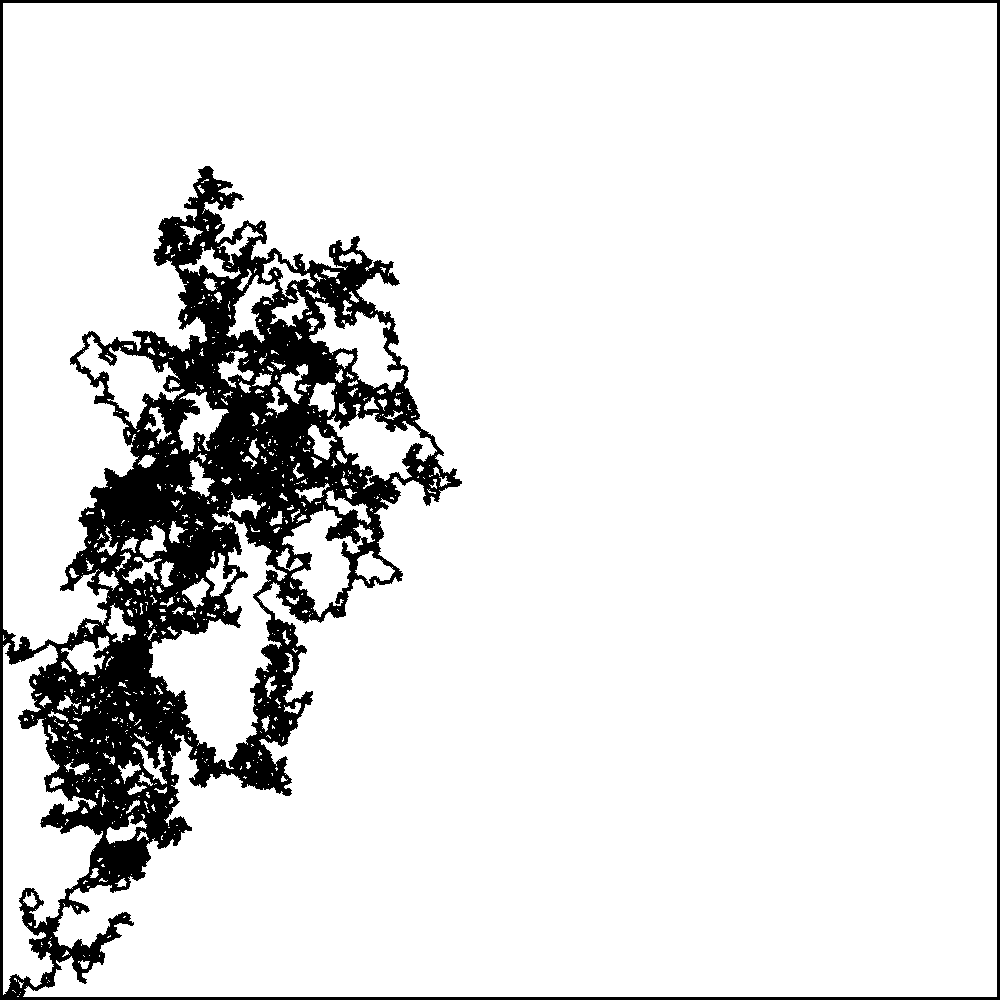}}};
\node[below left,text width=0.2cm,align=center] at (img.north west){{\large $R$}};
\node[above right,text width=0.3cm,align=left,xshift=-.2cm] at (img.south east){{\large $L$}};
\end{tikzpicture} \hspace{0.01\textwidth} 
\begin{tikzpicture}
\node (img) {\includegraphics[width=0.44\textwidth]{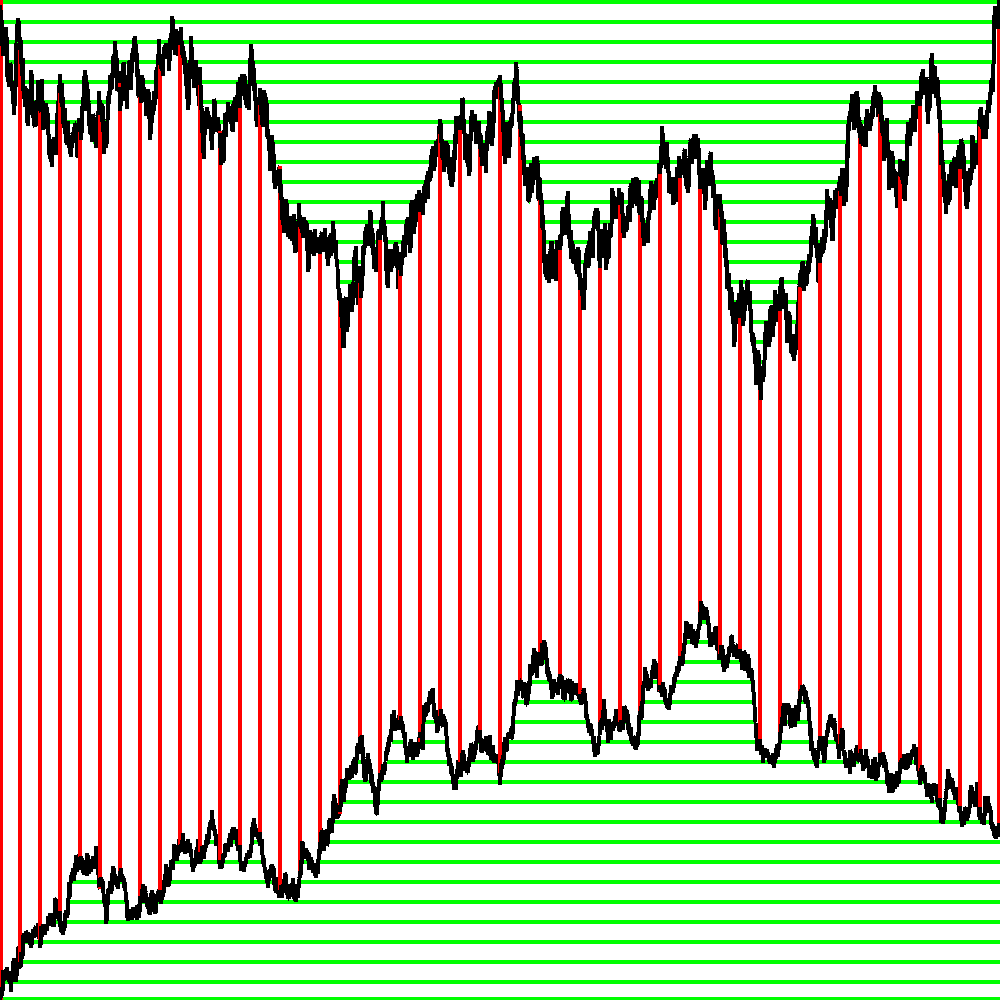}};
\node[below left,text width=-0.2cm,align=right,xshift=-.1cm] at (img.north east){{\large $C~-~\!R$}};
\node[above right,text width=-0.2cm,align=right,xshift=-.2cm] at (img.south east){{\large $L$}};
\end{tikzpicture}

\vspace{0.01\textheight}

\includegraphics[width=0.45\textwidth]{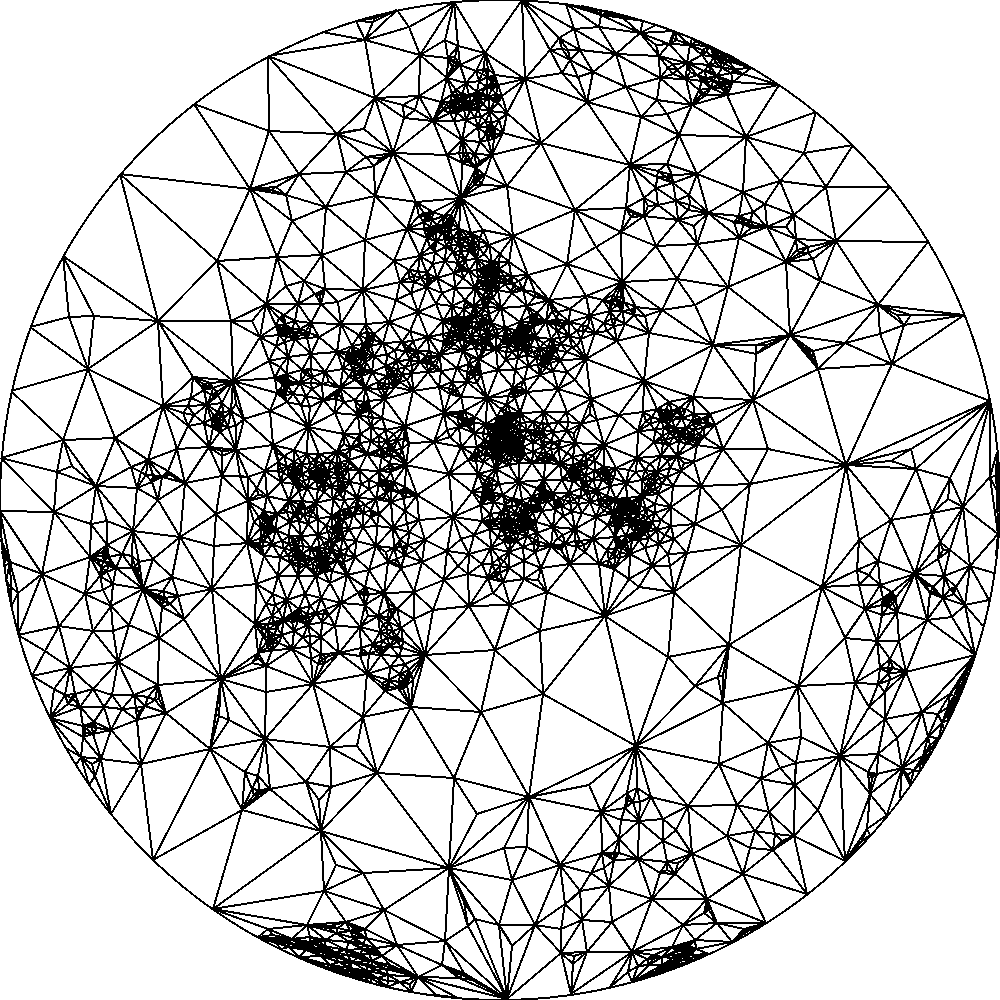}\hspace{0.05\textwidth}\includegraphics[width=0.45\textwidth]{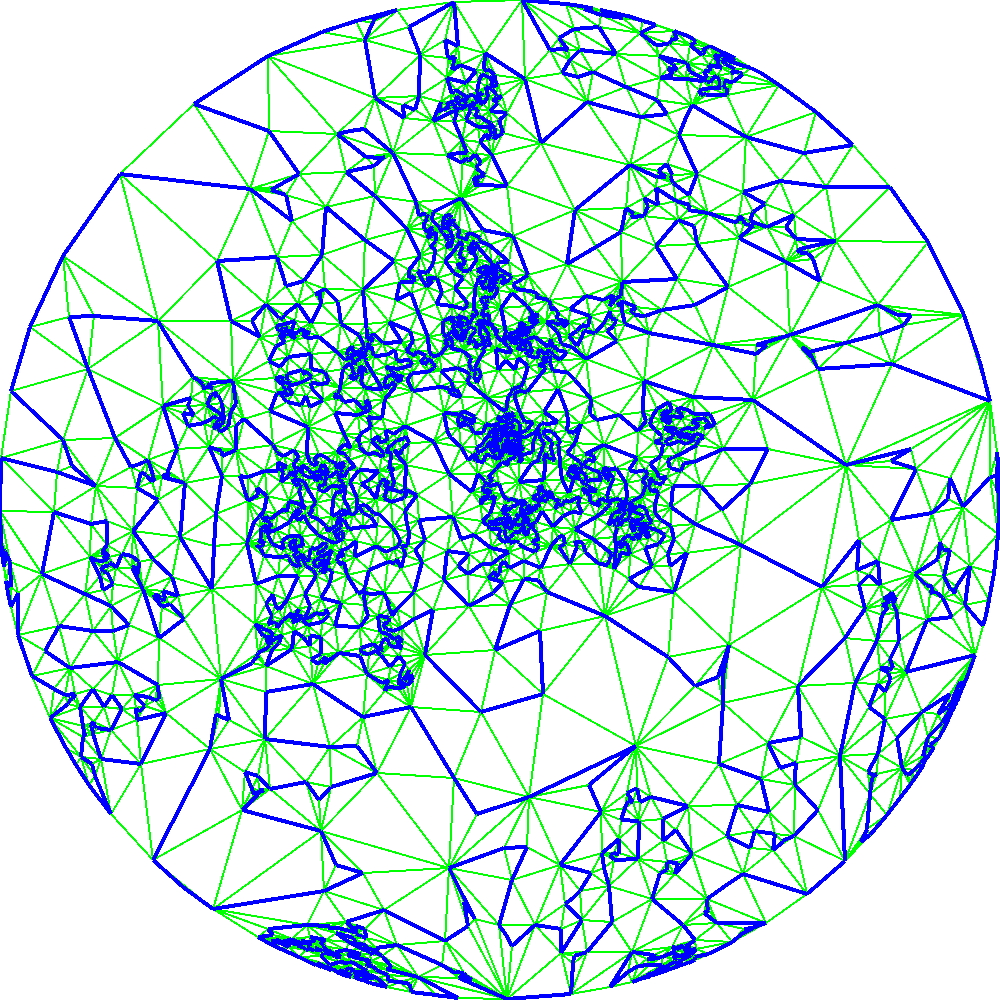}
\end{center}
\vspace{-0.03\textheight}
\caption{\label{fig::map} 
{\bf Top left:} A unit length Brownian excursion $(L,R)$ in ${\BB R}_+^2$ from $(0,0)$ to $(1,0)$. The correlation between $L$ and $R$ is given by $-\cos(\gamma^2\pi/4$); in this simulation we have taken $\gamma=\sqrt{2}$ (hence $\kappa=8$) so that $L,R$ are independent. 
{\bf Top right:} To define the mated-CRT map, we first choose a large constant $C>0$ so that the graphs of $L$ and $C-R$ do not intersect. Then, we divide the space between these two graphs into $n=20,\!000$ vertical strips (red). Each strip corresponds to a vertex of the mated-CRT map (which is identified with the horizontal coordinate of its right boundary), and two strips are connected by an edge if they share a boundary line or they are connected by a horizontal green chord below the graph of $L$ or above the graph of $C-R$, or a double edge if both of these conditions are satisfied; several such chords are shown in green. 
{\bf Bottom left:} The Tutte embedding of the mated-CRT map into the unit disk ${\BB D}$. Vertices (resp.\ edges) are represented by the small disks (resp.\ straight lines).
 {\bf Bottom right:} The space-filling path which corresponds to following the red chords from left to right.  Theorem~\ref{thm-tutte-conv0} says that the measure which assigns mass $1/n$ to each of the embedded vertices converges to the area measure associated with a $\gamma$-LQG surface, this space-filling path converges to a space-filling $\SLE_{\kappa}$ curve for $\kappa=16/\gamma^2$, and the simple random walk on the embedded map converges to Brownian motion (modulo time parameterization).  }
\end{figure}

For $n\in\BB N$, the \emph{mated-CRT map} $\mcl G^{1/n}$ (with the disk topology) with $n$ vertices is the graph with vertex set $ (\frac{1}{n} \BB Z) \cap (0,1]$, with two vertices $x_1,x_2 \in   (\frac{1}{n}\BB Z) \cap (0,1]$ connected by an edge if and only if either
\allb \label{eqn-inf-adjacency0}
&\left( \inf_{t\in [x_1- 1/n , x_1]} L_t \right) \vee \left(\inf_{t\in [x_2-1/n,x_2]} L_t \right) \leq \inf_{t\in [x_1,x_2-1/n]} L_t  \notag \\
&\qquad \op{or} \qquad \left( \inf_{t\in [x_1- 1/n , x_1]} R_t \right) \vee \left(\inf_{t\in [x_2-1/n,x_2]} R_t \right) \leq \inf_{t\in [x_1,x_2-1/n]} R_t  .
\alle
The vertices are connected by two edges if $|x_1-x_2|\not=1/n$ and~\eqref{eqn-inf-adjacency0} holds for both $L$ and $R$. 
The condition for $L$ in~\eqref{eqn-inf-adjacency0} is equivalent to the existence of a horizontal line segment below the graph of $L$ whose endpoints are of the form $(t_1,L_{t_1})$ and $(t_2,L_{t_2})$ for $t_1 \in [x_1-1/n,x_1]$ and $t_2 \in [x_2-1/n,x_2]$, and similarly for $R$. This allows us to give an equivalent, more geometric, version of the definition of $\mcl G^{1/n}$, as illustrated in the top-right panel of Figure~\ref{fig::map}. 
 
The {\em boundary} of $\mcl G^{1/n}$ is defined by 
\eqb \label{eqn-mated-crt-map-bdy}
\bdy\mcl G^{1/n} := \left\{ y \in  (\frac{1}{n} \BB Z) \cap (0,1] : \inf_{t \in [y-1/n,y]} L_t \leq \inf_{t\in [y,1]} L_t \right\} .
\eqe
In the setting of Figure~\ref{fig::map}, these vertices correspond to the vertical strips connected by a horizontal green line segment to a point on the right boundary of the figure.

The above defines the mated-CRT map as a \emph{graph}, but it is easy to see that this graph also comes with a natural \emph{planar map} structure, under which it is a triangulation; see Figure~\ref{fig-arc-graph}.

\begin{figure}[ht!]
\begin{center}
\includegraphics[scale=.8]{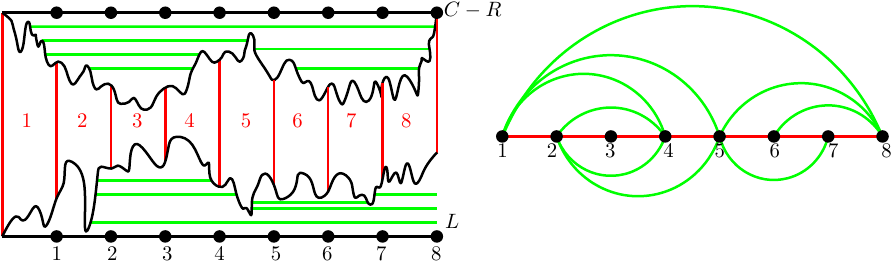} 
\caption{\label{fig-arc-graph} \textbf{Left:} A sketch of the construction of the mated-CRT map with $n=8$ vertices as described in Figure~\ref{fig::map}, with one representative green horizontal segment shown for each edge between non-consecutive vertices and one orange segment for each boundary edge (here time is scaled by $8$ to avoid writing fractions). \textbf{Right:} One can define a planar map structure on the mated-CRT map by identifying each vertex to the point of $(\frac{1}{n} \BB Z)\cap (0,1]$ equal to the right coordinate of the corresponding stripe, then connecting any two nearest-neighbor points of $(\frac{1}{n} \BB Z) \cap (0,1]$ by a line segment (red); and connecting any two non-nearest-neighbor points of $(\frac{1}{n} \BB Z)\cap (0,1]$ whose corresponding stripes are joined by a horizontal line lying below (resp.\ above) the graph of $L$ (resp.\ $C-R$) by an arc lying below (resp.\ above) the real line, shown in green. We note that the mated-CRT map is a triangulation with boundary with no self-loops (but double edges arising from pairs of vertices joined both above and below the real line) when equipped with this planar map structure. See~\cite[Figure 1]{gms-harmonic} for a more detailed explanation.
}
\end{center}
\end{figure}

\subsubsection{Tutte embedding}
 
To construct the Tutte embedding of the mated-CRT map $\mcl G^{1/n}$, we fix an interior vertex $\BB x^n$ and we enumerate the boundary vertices (in numerical order) as $\{y_1,\dots,y_k\}$. For $j = 1,\dots,k$ we let $\frk p(y_j)$ be the conditional probability given $\mcl G^{1/n}$ that a simple random walk started from $\BB x^n$ first hits the boundary at a vertex in the boundary arc $\{y_1,\dots,y_j\}$.
The boundary vertices $y_1, y_2, \ldots, y_k$ are then mapped in counterclockwise order around the complex unit circle via $y_j \mapsto e^{2\pi i \frk p(y_j)}$. This makes it so that the hitting probability of the random walk started from $\BB x^n$ approximates the uniform measure on the unit circle. One then maps the interior vertices of $\mcl G^{1/n}$ into the unit disk via the discrete harmonic extension of the boundary values. This is what we call the {\bf Tutte embedding} of the mated-CRT map centered at $\BB x^n$. Note that $\BB x^n$ is approximately mapped to the center of the disk in this embedding.\footnote{\label{footnote::computingembedding} One may compute $\frk p$ by first finding the function $\psi$ that is zero on $\{ y_1, y_2, \ldots, y_k \}$ and $1$ at $\BB x^n$ and discrete harmonic elsewhere, and then observing that the probability that a random walk from $\BB x^n$ first reaches  $\{ y_1, y_2, \ldots, y_k \}$ via a given edge is proportional to the gradient of $\psi$ along that edge. One can find $\psi$ using a sparse matrix solver, or using more ad hoc harmonic relaxation techniques (which may involve first getting an approximation to $\psi$ by considering a mated-CRT map with fewer vertices built from the same Brownian motion). The same is true for the next step: finding the harmonic extension of the map from $\{ y_1, y_2, \ldots, y_k \}$ to the circle.}

\begin{thm} \label{thm-tutte-conv0}
Fix $\gamma \in (0,2)$ and define the correlated Brownian excursion $(L,R)$ and the associated mated-CRT maps $\mcl G^{1/n}$ for $n\in\BB N$ as above. Also let $\BB t$ be sampled uniformly at random from $[0,1]$ (independently from $(L,R)$) and for $n\in\BB N$, let $\BB x^n \in   (\frac{1}{n} \BB Z) \cap (0,1]$ be the vertex of $\mcl G^{1/n}$ with $\BB t \in [\BB x^n - 1/n , \BB x^n]$.  
For $n\in\BB N$, let $\mu^n$ be the random measure on $\BB D$ which assigns mass $1/n$ to each of the $n$ points in the Tutte embedding of the mated-CRT map centered at $\BB x^n$. We have the following convergence in probability as $n \to \infty$. 
\begin{enumerate}
\item The measures $\mu^n$ converge weakly to a limiting $\gamma$-LQG measure $\mu$ on $\BB D$ that is a.s.\ determined by $(L,R)$. In particular, $\mu$ is the $\gamma$-LQG measure associated with a unit area, unit boundary length quantum disk, as defined in~\cite[Section 4.5]{wedges}. 
\item The space-filling path on the embedded mated-CRT map which comes from the left-right ordering of the vertices converges uniformly to space-filling $\op{SLE}_{\kappa}$ with $\kappa=16/\gamma^2$, parameterized by $\gamma$-LQG mass (see Section~\ref{sec-wpsf-prelim} for a review of the definition of this curve).
\item The conditional law given $\mcl G^{1/n}$ of the simple random walk on the embedded map started from $\BB x^n$ and stopped upon hitting $\bdy\mcl G^{1/n}$ converges to the law of Brownian motion started from 0 and stopped upon hitting $\bdy\BB D$, modulo time parameterization. 
\end{enumerate} 
\end{thm}

\begin{remark} There are only a few ways to draw random paths in a Euclidean domain that {\em provably} converge to SLE {\em in the Euclidean sense} (as parameterized or unparameterized paths). Aside from direct Loewner evolution discretizations or loop soup discretizations \cite{lupu-loop-soup-cle}, previous examples have applied to special values of $\kappa$, namely the $\kappa$ for which $\{\kappa, 16/\kappa \} \cap \{2,3,4,6 \} \not = \emptyset$. By contrast, Theorem~\ref{thm-tutte-conv0} spans the full range of $\kappa > 4$ (with boundaries corresponding to all $\kappa < 4$).

The construction is also computationally efficient: sampling a mated-CRT map with $n$ vertices requires $O(n)$ steps, and the embedding requires solving two sparse systems of linear equations (each with $n$ vertices; recall Footnote~\ref{footnote::computingembedding}) which can be implemented for millions of vertices with a sparse matrix package. 
\end{remark}

\begin{remark} \label{remark-lbm}
This work only shows the convergence of random walk on the embedded mated-CRT map to Brownian motion \emph{modulo time parametrization}. One expects that in fact the embedded walk converges in law w.r.t.\ the uniform topology to \emph{Liouville Brownian motion}~\cite{berestycki-lbm,grv-lbm}, the natural notion of Brownian motion on a $\gamma$-LQG surface in the continuum. It is obtained from ordinary Brownian motion by applying a time change that depends on the LQG surface.  
The convergence of random walk on the embedded mated-CRT map to Liouville Brownian motion was recently established in~\cite{bg-lbm}, building on the results of the present paper.
\end{remark}

\begin{figure}[ht!]
\begin{center}
\includegraphics[width=.75\textwidth]{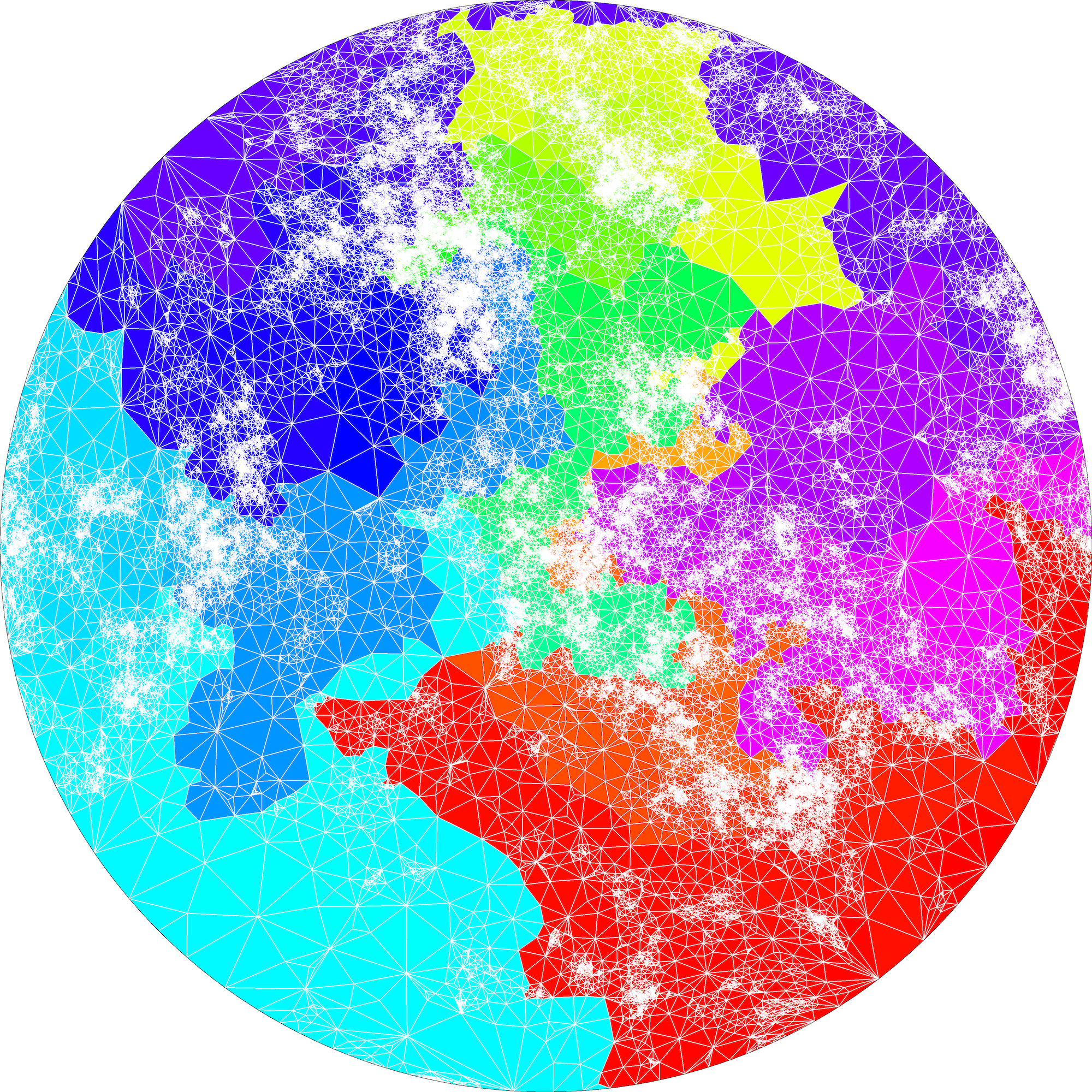} 

\vspace{0.015\textheight}

\includegraphics[width=0.4\textwidth, height=0.02\textheight]{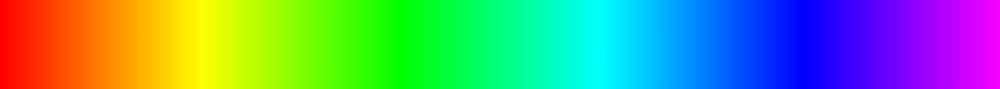}

Time ${\longrightarrow}$
\end{center}
\vspace{-0.03\textheight}
\caption{\label{fig::map2} Another instance of the mated-CRT map as in Figure~\ref{fig::map}, this time with $n=100,\!000$; edges are drawn in white.  Faces are colored according to the order in which they are hit by the associated space-filling path.  Time is parameterized by the fraction of vertices visited so far.  Theorem~\ref{thm-tutte-conv0} implies that this path converges to an $\SLE_{\kappa}$ loop as $n \to \infty$.  In this case, $\kappa=8$ since the coordinates of the Brownian excursion were taken to be independent.  Color changes are abrupt because most of the vertices of the embedded map are clustered together in small regions (which appear white in the figure due to the presence of many edges). The boundary of the region corresponding to a range of colors is a form of $\SLE_\kappa$, with $\kappa=16/\kappa$, which is $2$ in this case. Additional simulations, including ones for other values of $\gamma$ and $\kappa$, can be found at \url{http://statslab.cam.ac.uk/~jpm205/tutte-embeddings.html}. }
\end{figure}

\subsection{From mated-CRT maps to random walk in random environment}
\label{sec-approach}

The main feature that makes the mated-CRT map special is that, thanks to the results in~\cite{wedges,sphere-constructions,ag-disk}, this map comes {\em a priori} with a certain kind of embedding into $\BB C$ described by SLE-decorated LQG. To explain this, we note that the mated-CRT map construction as described in Figure~\ref{fig::map} has a continuous time (``$n=\infty$") variant: the disk version of the so-called \emph{peanosphere}. This is a topological measure space decorated by a space-filling curve $\eta$ obtained as the quotient of $[0,1] \times [0,C]$ when we identify each red line between the graphs of $ L$ and $C - R$ and each green line under the graph of $L$ or above the graph of $C-R$ to a point. 
The measure is the pushforward of Lebesgue measure on a horizontal line segment between the two graphs and the space-filling curve $\eta$ is the one obtained by tracing such a segment from left to right at unit speed. It is easy to see using Moore's theorem~\cite{moore} that this space has the topology of the disk (see, e.g., the argument in \cite[Section~1.3]{wedges} for the case of the sphere --- the disk case is similar).  We use the term {\bf peanosphere} informally to describe the space-filling-path-decorated topological measure space obtained this way. 

The peanosphere does not {\em a priori} come with a conformal structure. However, it is shown in~\cite{wedges,sphere-constructions,ag-disk} that an instance of the disk version of the peanosphere a.s.\ has a canonical (albeit non-explicit) conformal structure, and hence a canonical embedding into $\BB D$.  We will discuss this embedding and its relationship to the mated-CRT map in more detail in Section~\ref{sec-sg-def}, but for now let us give a brief overview.

Under its canonical embedding into $\BB D$, the measure on the peanosphere maps to a variant of the $\gamma$-LQG measure (in particular, the one associated with a quantum disk) and the curve $\eta$ maps to a space-filling form of $\SLE_{\kappa}$ with $\kappa = 16/\gamma^2$, parameterized so that it covers $t$ units of $\gamma$-LQG mass in $t$ units of time. 

Each vertex $x\in  (\frac{1}{n} \BB Z) \cap (0,1]$ of the mated-CRT map $\mcl G^{1/n}$ associated with $(L,R)$ corresponds to the \emph{cell} $\eta([x-1/n,x])$. 
Two vertices of $\mcl G^{1/n}$ are connected by an edge if and only if the intersection of the corresponding cells contains a non-trivial connected set (so that, e.g., intersections along a Cantor-like set do not count). This graph of cells is called the \textbf{$1/n$-structure graph} of $\eta$ (as in~\cite{ghs-dist-exponent}) because it encodes the topological structure of the cells; see Figure~\ref{fig-sg-def-disk} for an illustration.

\begin{remark} \label{remark-explicit}
The proof in~\cite{wedges,sphere-constructions,ag-disk} shows only that the above embedding of the peanosphere into $\BB C$ is a.s.\ equal to a deterministic functional of the correlated Brownian excursion $(L,R)$. It does \emph{not} give an explicit description of this functional. 
However, once Theorem~\ref{thm-tutte-conv0} is established we can make this functional explicit: it is simply the $n\to\infty$ limit of Tutte embeddings of the associated mated-CRT maps described in Figure~\ref{fig::map}.\footnote{To put this a different way, the reader may recall that the construction of $\SLE_4$ as a ``level line'' of the GFF as in~\cite{ss-contour} is {\em discretely approximated}. Precisely, a GFF level line is the limit of its discrete counterparts, which are level sets of projections of the GFF onto spaces of piecewise linear functions~\cite{ss-dgff}. By contrast, the construction of a general imaginary geometry flow line from the GFF in~\cite{ig1,ig2,ig3,ig4} is (conjectured but) not known to be discretely approximated in any similar way. Currently, the only way to construct the path from the field is indirect: one first produces a coupling of the GFF and the path, then shows that in this coupling the field a.s.\ determines the path. Theorem~\ref{thm-tutte-conv0} can be interpreted as saying that construction of SLE-decorated LQG from a correlated CRT pair is discretely approximated in a certain sense.}
\end{remark}

One can define an a priori embedding of the mated-CRT map by sending each vertex $x$ to the corresponding cell $\eta([x-1/n,x])$. Since $\eta$ is parameterized by $\gamma$-LQG mass, it is clear that under the \emph{a priori} embedding, the counting measure on vertices, scaled by $1/n$, converges to the $\gamma$-LQG measure and the discrete space-filling path obtained by following the vertices in left-right order converges to $\eta$. Hence to prove Theorem~\ref{thm-tutte-conv0} (modulo the random walk convergence statement), we only need to show that the a priori embedding is close to the Tutte embedding. Since the Tutte embedding is defined in terms of hitting probabilities for simple random walk on the map, Theorem~\ref{thm-tutte-conv0} will be a straightforward consequence of the following theorem.   

\begin{thm} \label{thm-quenched-clt0}
As $n\rta\infty$, the image of simple random walk on the mated-CRT map under the a priori embedding $x\mapsto \eta(x)$ (i.e., the simple random walk on the graph of cells $\eta([x-1/n,x])$) stopped when it hits $\bdy \mcl G^{1/n}$ converges in law to Brownian motion modulo time parameterization in the quenched sense, uniformly in the choice of starting point. 
\end{thm}

See Theorem~\ref{thm-rw-conv} for a more precise version of Theorem~\ref{thm-quenched-clt0}, which also treats the whole-plane and sphere cases.

\begin{figure}[ht!]
\begin{center}
\includegraphics[scale=.6]{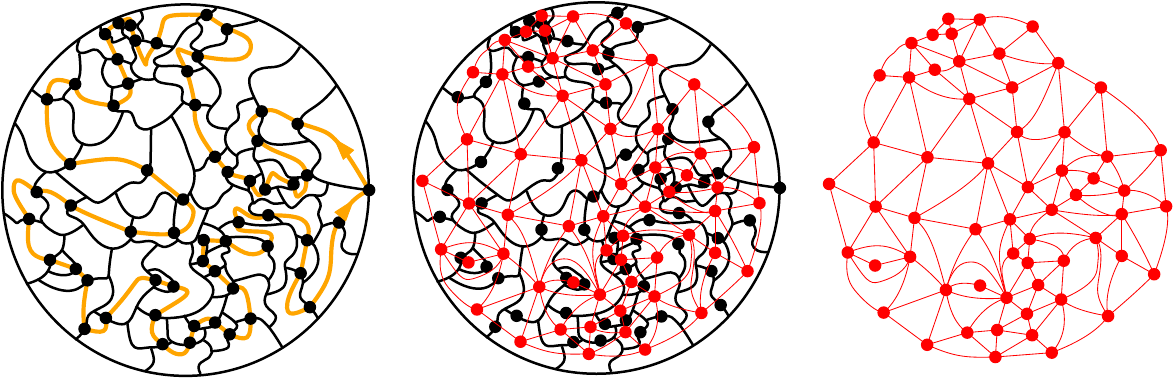} 
\caption{\label{fig-sg-def-disk} \textbf{Left:} A segment of a space-filling curve $\eta : [0,1] \rta \ol{\BB D}$, divided into cells $  \eta([x-1/n ,x])$ for $x\in (\frac{1}{n} \BB Z) \cap (0,1]$. The order in which the cells are hit is shown by the orange path. This figure looks like what we would expect to see for $\kappa \geq 8$ ($\gamma \leq \sqrt 2$), since the cells are simply connected (see Figure~\ref{fig-weird-cell}, left for an illustration of the case $\kappa \in (4,8)$). \textbf{Middle:} We draw a red point in each cell and connect the points whose cells share a corresponding boundary arc. The red graph is then an a priori embedding of the mated-CRT map into $\BB C$. Note that this graph is a triangulation, with faces corresponding to the points where three of the black curves meet. \textbf{Right:} Same as the middle picture but without the original cells, so that only the a priori embedding of the mated-CRT map is visible. 
We will prove that the simple random walk on the graph of cells converges to Brownian motion modulo time parameterization, and hence the a priori embedding is close to the Tutte embedding when the number of vertices is large. 
}
\end{center}
\end{figure}

\begin{figure}[t!]
 \begin{center}
\includegraphics[scale=.85]{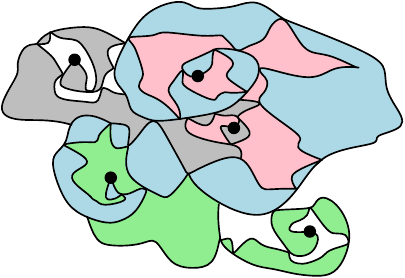}
\vspace{-0.01\textheight}
\caption{Four typical space-filling SLE$_\kappa$ cells for $\kappa\in (4,8)$. The picture is slightly misleading since the set of ``pinch points" where the left and right boundaries of each cell meet is actually uncountable, with no isolated points, but has Hausdorff dimension less than 2. The points where $\eta$ starts and finishes filling in each cell are shown with black dots. Note that the grey and green cells intersect at several points, but do not share a connected boundary arc so are \emph{not} considered to be adjacent. This is natural since one can think of the blue cell as lying in between the grey and green cells. In fact, two cells which intersect, but do not share a connected boundary arc, will always be separated by another cell in this manner. The case when $\kappa \geq 8$ is much simpler: in this setting, the interiors of the cells are simply connected and two cells are connected by an edge if and only if they intersect. 
}\label{fig-weird-cell}
\end{center}
\vspace{-1em}
\end{figure} 

 At first glance, Theorem~\ref{thm-quenched-clt0} would appear something like a conventional random walk in random environment (RWRE) problem: one has a random walk (jumping from cell to cell) and wants to show it approximates Brownian motion. There is a vast literature on the topic of RWRE; see~\cite{bf-rwre-survey,biskup-rwre-survey} for recent surveys. However, our situation does not fit into the conventional RWRE framework. Indeed, due to the fractal nature of the $\gamma$-LQG measure the cell sizes vary dramatically from one location to another, so the environment is highly spatially inhomogeneous. As a consequence of this, one should not expect the random walk on the adjacency graph of cells to converge uniformly to standard Brownian motion---instead, the walk to converge to a Brownian motion with a \emph{random} time parameterization (the so-called \emph{Liouville Brownian motion}; see Remark~\ref{remark-lbm}). Moreover, spatially translating the environment of cells corresponding to the whole-plane mated-CRT map changes the normalization of the field associated with the LQG surface, so the environment is not stationary with respect to spatial translations. 
 
Nevertheless, it follows from basic properties of the LQG measure that our environment is in some sense approximately translation invariant \emph{modulo scaling}, i.e., the collections of cells which intersect small neighborhoods of two different points $z,w \in \BB D$ agree in law (approximately) if we re-scale them both in the same way, e.g., so that the cells containing $z$ and $w$ each have unit Lebesgue measure. In fact, as we will explain below, it is possible to define an infinite-volume graph of cells which locally looks like the disk version considered above and which satisfies a certain exact notion of translation invariance modulo scaling. 

In order to prove Theorem~\ref{thm-quenched-clt0}, we will employ a general quenched invariance principle for random walk in two-dimensional random environments which are only required to be ``translation invariant modulo scaling" in the above sense (Theorem~\ref{thm-general-clt-uniform} just below), which was proven in~\cite{gms-random-walk}. In order to show that the hypotheses of the theorem are satisfied in our setting, we will (among other things) need to establish certain moment estimates for space-filling SLE cells which are of independent interest (for example, these moment bounds are also used in~\cite{gms-harmonic,gm-spec-dim}). 

\subsection{Scaling limit for random walk in random environment}
\label{sec-clt-thm} 

In this subsection we state a version of the main result of~\cite{gms-random-walk} which gives conditions under which random walk on the adjacency graph of a random collection of cells (such as the one discussed in the preceding subsection) converges to Brownian motion. As in~\cite{gms-random-walk}, we will work in the whole-plane rather than the disk. Let us first describe what we mean by an ``adjacency graph of cells". 

\begin{defn} \label{def-cell-config}
A \emph{cell configuration} on $\BB C$ consists of the following objects.
\begin{enumerate}
\item A locally finite collection $\mcl H$ of compact connected subsets of $\BB C$ (``cells") with non-empty interiors whose union is all of $\BB C$ and such that the intersection of any two elements of $\mcl H$ has zero Lebesgue measure.
\item A symmetric relation $\sim$ on $\mcl H\times \mcl H$ (``adjacency") such that if $H\sim H'$, then $H\cap H'\not=\emptyset$ and $H\not=H'$. 
\item A function $\frk c = \frk c_{\mcl H}$ (``conductance") from the set of pairs $(H,H')$ with $H\sim H'$ to $(0,\infty)$ such that $\frk c(H,H') = \frk c(H',H)$. 
\end{enumerate}
\end{defn}
 
We will typically slightly abuse notation by making the relation $\sim$ and the function $\frk c$ implicit, so we write $\mcl H$ instead of $(\mcl H,\sim , \frk c)$.  
We view $\mcl H$ as a weighted graph whose vertices are the cells of $\mcl H$ and whose edge set is
\eqb \label{eqn-cell-edges}
\mcl E\mcl H := \left\{ \{H,H'\} \in \mcl H\times \mcl H : H\sim H' \right\} ,
\eqe
and such that any edge $\{H,H'\}$ is assigned the weight $\frk c(H,H')$. 
Note that any two cells which are joined by an edge intersect but two intersecting cells need not be joined by an edge. 
We emphasize that the cells of $\mcl H$ are \emph{not} required to be simply connected. In particular, space-filling SLE$_{\kappa}$ type cells for $\kappa \in (4,8)$, as illustrated in Figure~\ref{fig-weird-cell}, are allowed.

For a cell configuration $\mcl H$ and $z\in \BB C$, we write $H_z$ for one of the cells in $\mcl H$ containing $z$, chosen in some arbitrary manner if there is more than one such cell (the cell is unique for Lebesgue-a.e.\ $z$). 
Following~\cite{gms-random-walk}, we define
\eqb \label{eqn-cell-restrict}
 \mcl H(A) := \left\{H\in\mcl H : H\cap A \not=\emptyset \right\}   ,\quad \forall A\subset\BB C
\eqe 
and view $\mcl H(A)$ as an edge-weighted graph with edge set consisting of all of the edges in $\mcl E\mcl H$ joining elements of $\mcl H(A)$ and conductances given by the restriction of $\frk c$. We also define a metric on the space of cell configurations by
\allb \label{eqn-cell-metric}
\BB d^{\op{CC}}(\mcl H,\mcl H') &:= \int_0^\infty e^{-r} \wedge \inf_{f_r} \big\{ \sup_{z\in \BB C} |z - f_r(z)|   \notag \\
&\qquad \qquad   + \max_{\{H_1,H_2\} \in \mcl E\mcl H(B_r(0))} |\frk c(H_1,H_2) - \frk c'(f_r(H_1) , f_r(H_2) ) | \big\}  \,dr
\alle
where each of the infima is over all homeomorphisms $f_r : \BB C\rta \BB C$ such that $f_r$ takes each cell in $\mcl H(B_r(0))$ to a cell in $\mcl H'(B_r(0))$ and preserves the adjacency relation, and $f_r^{-1}$ does the same with $\mcl H$ and $\mcl H'$ reversed.  We will typically be working with a random cell configuration, i.e., a random variable taking values in the space of cell configurations equipped with the Borel $\sigma$-algebra generated by the above metric.

We also define the primal and dual stationary measures on $\mcl H$ by
\eqb \label{eqn-stationary-measure}
\pi(H) := \sum_{\substack{H'\in\mcl H : H'\sim H}} \frk c(H,H') \quad \op{and} \quad
\pi^*(H) := \sum_{\substack{H'\in\mcl H : H'\sim H}} \frac{1}{ \frk c(H,H') }. 
\eqe

In~\cite{gms-random-walk}, we proved that the random walk on a random cell configuration $\mcl H$ with conductances $\frk c$ which satisfies the following hypotheses converges to Brownian motion.
Here, for $C>0$ and $z\in\BB C$ we write $C(\mcl H-z)$ for the cell configuration obtained by translating all of the cells by $-z$ then scaling all of the cells by $C$.  
\begin{enumerate}
\item \textbf{Translation invariance modulo scaling.} There is a (possibly random and $\mcl H$-dependent) increasing sequence of open sets $U_j \subset \BB C$, each of which is either a square or a disk, whose union is all of $\BB C$ such that the following is true. Conditional on $\mcl H$ and $U_j$, let $z_j$ for $j\in\BB N$ be sampled uniformly from Lebesgue measure on $U_j$. Then the shifted cell configurations $\mcl H - z_j$ converge in law to $\mcl H$ modulo scaling as $j\rta\infty$, i.e., there are random numbers $C_j >0$ (possibly depending on $\mcl H$ and $z_j$) such that $ C_j(\mcl H-z_j) \rta \mcl H$ in law with respect to the metric~\eqref{eqn-cell-metric}. \label{item-hyp-resampling} 
\item \textbf{Ergodicity modulo scaling.} Every real-valued measurable functions $F = F(\mcl H)$ which is invariant under translation and scaling, i.e., $F(C(\mcl H-z)) = F(\mcl H)$ for each $z\in\BB C$ and $C>0$, is a.s.\ equal to a deterministic constant. \label{item-hyp-ergodic} 
\item \textbf{Finite expectation.} With $H_0$ the cell containing 0 and $\pi$ and $\pi^*$ as in~\eqref{eqn-stationary-measure},  \label{item-hyp-moment}
\eqb\label{eqn-hyp-moment}
\BB E\left[ \frac{\op{diam}(H_0)^2}{\op{area}(H_0)} \pi(H_0)     \right] <\infty  \quad\op{and} \quad 
\BB E\left[ \frac{\op{diam}(H_0)^2}{\op{area}(H_0)} \pi^*(H_0)     \right] <\infty 
\eqe 
where $\op{diam}$ and $\op{area}$ denote Euclidean diameter and Lebesgue measure, respectively. 
\item \textbf{Connectedness along lines.} Almost surely, for each horizontal or vertical  line segment $L \subset \BB C$, the subgraph of $\mcl H$ induced by the set of cells which intersect $L$ is connected. \label{item-hyp-adjacency}
\end{enumerate}

We note that the combination of hypotheses~\ref{item-hyp-resampling} and~\ref{item-hyp-ergodic} is referred to as \emph{ergodicity modulo scaling} in~\cite{gms-random-walk}. 
Several equivalent formulations of hypothesis~\ref{item-hyp-resampling} are given in~\cite[Definition 1.2]{gms-random-walk}, but we state only the formulation which we will use in the present paper. 
The following is~\cite[Theorem 3.10]{gms-random-walk}. 

\begin{thm}[\cite{gms-random-walk}] \label{thm-general-clt-uniform}
Let $\mcl H$ be a random cell configuration satisfying the above four hypotheses. 
For $z\in\BB C$, let $Y^z$ denote the random walk on $\mcl H$ started from $H_z$ (with conductances $\frk c$). For $j\in\BB N_0$, let $\wh Y_j^z$ be an arbitrarily chosen point of the cell $Y_j^z$ and extend $\wh Y^z$ from $\BB N_0$ to $[0,\infty)$ by piecewise linear interpolation. 
There is a deterministic covariance matrix $\Sigma$ with $\det\Sigma\not=0$ such that the following is true. For each fixed compact set $A \subset \BB C$, it is a.s.\ the case that as $\ep \rta 0$, the maximum over all $z\in A$ of the Prokhorov distance between the conditional law of $\ep \wh Y^{z/\ep}$ given $\mcl H$ and the law of Brownian motion started from $z$ with covariance matrix $\Sigma$, with respect to the topology on curves modulo time parameterization (as defined in Section~\ref{sec-cmp-metric}), tends to 0. 
\end{thm}

We emphasize that the random walk in Theorem~\ref{thm-general-clt-uniform} converges to Brownian motion \emph{uniformly} on compact subsets of $\BB C$. This will be important for us since it implies the convergence of certain discrete harmonic functions (e.g., the coordinates of the Tutte embedding functions) to their continuum counterparts.

\begin{figure}[ht!]
\begin{center}
\includegraphics[scale=.8]{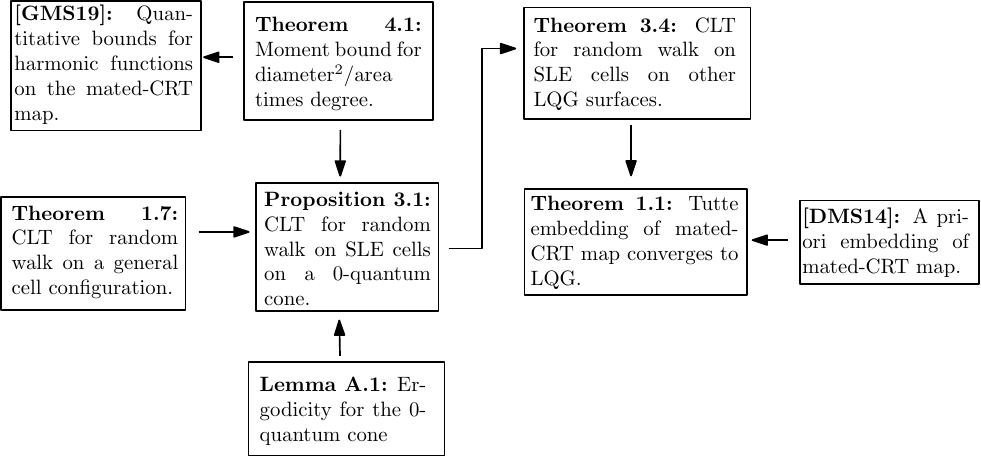} 
\caption{\label{fig-outline}
Schematic outline of the proof of Theorem~\ref{thm-tutte-conv0}. 
}
\end{center}
\end{figure}

Figure~\ref{fig-outline} illustrates how the various statements in this paper fit together to yield Theorem~\ref{thm-tutte-conv0}. 
In order to extract Theorem~\ref{thm-quenched-clt0}, and thereby Theorem~\ref{thm-tutte-conv0}, from Theorem~\ref{thm-general-clt-uniform}, we will first consider a variant of the adjacency graph of cells discussed in Section~\ref{sec-approach} which satisfies the hypotheses of Theorem~\ref{thm-general-clt-uniform}. In particular, we will look at the adjacency graph of cells associated with a whole-plane space-filling SLE$_\kappa$ curve parameterized by $\gamma$-LQG length with respect to an independent 0-quantum cone. The 0-quantum cone is a $\gamma$-LQG surface with the topology of the whole plane which describes the local behavior of a general $\gamma$-LQG surface near a Lebesgue typical point. See Section~\ref{sec-lqg-prelim} for more on the 0-quantum cone. 

We will use Theorem~\ref{thm-general-clt-uniform} to show that random walk on the above adjacency graph of cells converges to Brownian motion modulo time parameterization. To do this, we need to check the four hypotheses of the theorem (Proposition~\ref{prop-0cone}). We note that the limiting covariance matrix $\Sigma$ will be a positive scalar multiple of the identity matrix in our setting due to the rotational invariance of the law of the 0-quantum cone. 
As we will see, hypotheses~\ref{item-hyp-resampling},~\ref{item-hyp-ergodic} and~\ref{item-hyp-adjacency} follow relatively easily from the definitions of the 0-quantum cone and the associated graph of cells. The fact that the origin is ``Lebesgue typical" is what allows us to check hypothesis~\ref{item-hyp-resampling}. To check hypothesis~\ref{item-hyp-ergodic}, we will need a basic ergodicity property for the 0-quantum cone, which we prove in Appendix~\ref{sec-ergodicity}.

The verification of the finite expectation condition given in hypothesis~\ref{item-hyp-moment} will require more work. We first note that the hypothesis is equivalent to $\BB E[\op{deg}(H_0) \op{diam}(H_0)^2/\op{area}(H_0) ]  < \infty$, where $\op{deg}$ denotes the degree, since the conductances on our cell configuration will take values in $\{1,2\}$ (conductance 2 correponds to double edges). The proof of this estimate is based on estimates for SLE and LQG and is given in Section~\ref{sec-moment}. We will in fact prove a slightly stronger statement (giving moments up to order $4/\gamma^2$) which also has other applications. For example, our moment bound is used in~\cite{gms-harmonic} to prove several quantitative estimates for harmonic functions on the mated-CRT map, which are then transferred to other random planar maps --- like spanning-tree weighted maps and the UIPT --- in~\cite{gm-spec-dim}.

Once we have checked the hypotheses of Theorem~\ref{thm-general-clt-uniform} in the case of space-filling SLE$_\kappa$ cells on a 0-quantum cone, the theorem will tell us that random walk on the adjacency graph of such cells converges to Brownian motion. Using local absolute continuity, we will then argue that the same is true with any other $\gamma$-LQG surface in place of a 0-quantum cone (see Theorem~\ref{thm-rw-conv}). This will in particular imply Theorem~\ref{thm-quenched-clt0}.

\subsection{Related works} 
\label{sec-related-works}

Several celebrated papers have recently established qualitative results about random walk and discrete conformal embeddings that apply to general classes of infinite random planar maps, sometimes including the infinite volume version of the mated-CRT map.  See, for example,~\cite{benjamini-schramm-topology,benjamini-curien-uipq-walk,gn-recurrence,lee-conformal-growth,lee-uniformizing,gill-rohde-type,abgn-bdy,georg-poisson-bdy,angel-hyperbolic,ahnr-hyperbolic,chn-causal,cg-liouville,gjn-macroscopic-circles}. 
These papers have explored recurrence versus transience, parabolicity versus hyperbolicity, Tutte embeddings, Martin boundaries, heat kernel bounds, circle packings, discrete harmonic functions and other related topics. 
Some of these works have specific consequences in our setting; for example, the main result of~\cite{gn-recurrence} directly implies recurrence for infinite volume mated-CRT maps (see~\cite[Section 2.2]{gms-harmonic}). However these papers do not treat fine mesh scaling limits as we do here.

The papers~\cite{ghs-dist-exponent,ghs-map-dist,gms-harmonic,gm-spec-dim,gh-displacement,dg-lqg-dim,gp-dla,gjn-macroscopic-circles} also study mated-CRT maps (\cite{ghs-dist-exponent} uses the term ``structure graph" instead of ``mated-CRT map"). The papers~\cite{ghs-dist-exponent,ghs-map-dist,dg-lqg-dim,gp-dla} prove estimates for graph distances in mated-CRT maps, which are then transferred to other random planar maps (e.g., uniform maps and/or spanning tree-decorated maps) using a strong coupling of random walk and Brownian motion~\cite{zaitsev-kmt}. 
The work~\cite{gms-harmonic} proves quantitative bounds for the Dirichlet energy and modulus of continuity of harmonic functions on the mated-CRT map, using the moment bound of Theorem~\ref{thm-moment} in the present paper as a starting point. The results of~\cite{gms-harmonic} are used in~\cite{gm-spec-dim} to prove a lower bound for the return probability of the random walk on the mated-CRT map to its starting point and an upper bound for the graph distance displacement of the walk after $n$ steps. These results are then transferred to other random planar maps (such as the UIPT) using strong coupling. The paper~\cite{gh-displacement} proves an upper bound for the graph distance displacement of the random walk on the mated-CRT map and on other random planar maps.
The paper~\cite{gjn-macroscopic-circles} shows that there are no macroscopic circles in the circle packing of the mated-CRT map, building on the estimates of~\cite{gms-harmonic}. 
 
Biskup, Ding, and Goswami~\cite{bdg-lqg-rw} study random walk in a different environment which is also a natural discretization of LQG, obtained by weighting the edges of $\BB Z^2$ by the exponential of $\gamma$ times a discrete GFF, and prove various estimates for this walk.

The article~\cite{gms-poisson-voronoi} considers simple random walk on the Voronoi tessellation associated with a Poisson point process on a Brownian surface and shows that it converges (modulo time parameterization) to Brownian motion.

Finally, we mention that there is an extensive ongoing project involving papers by the present authors as well as Holden, Sun, and others which aims to prove that uniformly random triangulations under the so-called \emph{Cardy embedding} converge in the scaling limit to $\sqrt{8/3}$-LQG. The Cardy embedding (named for Cardy's formula for percolation~\cite{smirnov-cardy}) is a form of discrete conformal embedding that is defined using crossing probabilities for percolation interfaces (rather than hitting probabilities for random walk like in the Tutte embedding). Papers used in the proof of the Cardy embedding convergence include~\cite{gwynne-miller-char,gwynne-miller-perc,bhs-site-perc,ghss-negative-moments,hlls-cut-pts,hls-sle6,bhs-site-perc} as well as several works in preparation. 

Unlike the present paper, the convergence proof for the Cardy embedding makes heavy use of results about the metric space structure of uniform random planar maps and $\sqrt{8/3}$-LQG so only works for $\gamma=\sqrt{8/3}$.

\bigskip

\noindent\textbf{Acknowledgements.} 
We thank an anonymous referee for helpful comments on an earlier version of this paper.
We thank the Mathematical Research Institute of Oberwolfach for its hospitality during a workshop where part of this work was completed.  E.G.\ was partially funded by NSF grant DMS-1209044.  S.S.\ was partially supported by NSF grants DMS-1712862 and DMS-1209044 and a Simons Fellowship with award number 306120.

\section{Preliminaries}
\label{sec-prelim}

\subsection{Basic definitions}
\label{sec-basic}

\subsubsection{Basic notation} 
\label{sec-basic-notation}

\noindent
We write $\BB N$ for the set of positive integers and $\BB N_0 = \BB N\cup \{0\}$. 
\vspace{6pt}

\noindent
For $a,b \in \BB R$ with $a<b$, we define the discrete interval $[a,b]_{ \BB Z} := [a, b]\cap \BB Z $.
\vspace{6pt}
   
\noindent
If $a$ and $b$ are two ``quantities'' (i.e., functions from any sort of ``configuration space'' to the real numbers) we write $a\preceq b$ (resp.\ $a \succeq b$) if there is a constant $C > 0$ (independent of the values of $a$ or $b$ and certain other parameters of interest) such that $a \leq C b$ (resp.\ $a \geq C b$). We write $a \asymp b$ if $a\preceq b$ and $a \succeq b$. We typically describe dependence of implicit constants in lemma/proposition statements and require constants in the proof to satisfy the same dependencies.
\vspace{6pt}

\noindent
If $a$ and $b$ are two quantities depending on a variable $x$, we write $a = O_x(b)$ (resp.\ $a = o_x(b)$) if $a/b$ remains bounded (resp.\ tends to 0) as $x\rta 0$ or as $x\rta\infty$ (the regime we are considering will be clear from the context). 
\vspace{6pt}
 
\noindent
We write $a = o_x^\infty(b)$ if $a = o_x(b^p)$ for every $p\in\BB R$. 
\vspace{6pt}

\noindent
For a graph $G$, we write $\mcl V(G)$ and $\mcl E(G)$, respectively, for the set of vertices and edges of $G$, respectively. We sometimes omit the parentheses and write $\mcl VG = \mcl V(G)$ and $\mcl EG = \mcl E(G)$. For $v\in\mcl V(G)$, we write $\op{deg}(v )$ for the degree of $v$ (i.e., the number of edges with $v$ as an endpoint). 
\vspace{6pt}

\subsubsection{Metric on curves modulo time parameterization}
\label{sec-cmp-metric}

 If $\beta_1 : [0,T_{\beta_1}] \rta \BB C$ and $\beta_2 : [0,T_{\beta_2}] \rta \BB C$ are continuous curves defined on possibly different time intervals, we set 
\eqb \label{eqn-cmp-metric}
\BB d^{\op{CMP}} \left( \beta_1,\beta_2 \right) := \inf_{\phi } \sup_{t\in [0,T_{\beta_1} ]} \left| \beta_1(t) - \beta_2(\phi(t)) \right| 
\eqe 
where the infimum is over all increasing homeomorphisms $\phi : [0,T_{\beta_1}]  \rta [0,T_{\beta_2}]$ (the CMP stands for ``curves modulo parameterization"). It is shown in~\cite[Lemma~2.1]{ab-random-curves} that $\BB d^{\op{CMP}}$ induces a complete metric on the set of curves viewed modulo time parameterization. 

In the case of curves defined for infinite time, it is convenient to have a local variant of the metric $\BB d^{\op{CMP}}$. Suppose $\beta_1 : [0,\infty) \rta \BB C$ and $\beta_2 : [0,\infty) \rta \BB C$ are two such curves. For $r > 0$, let $T_{1,r}$ (resp.\ $T_{2,r}$) be the first exit time of $\beta_1$ (resp.\ $\beta_2$) from the ball $B_r(0)$ (or 0 if the curve starts outside $B_r(0)$). 
We define 
\eqb \label{eqn-cmp-metric-loc}
\BB d^{\op{CMP}}_{\op{loc}} \left( \beta_1,\beta_2 \right) := \int_1^\infty e^{-r} \left( 1 \wedge \BB d^{\op{CMP}}\left(\beta_1|_{[0,T_{1,r}]} , \beta_2|_{[0,T_{2,r}]} \right) \right) \, dr ,
\eqe 
so that $\BB d^{\op{CMP}}_{\op{loc}} (\beta^n , \beta) \rta 0$ if and only if for Lebesgue a.e.\ $r > 0$, $\beta^n$ stopped at its first exit time from $B_r(0)$ converges to $\beta$ stopped at its first exit time from $B_r(0)$ with respect to the metric~\eqref{eqn-cmp-metric}. 
We note that the definition~\eqref{eqn-cmp-metric} of $\BB d^{\op{CMP}}\left(\beta_1|_{[0,T_{1,r}]} , \beta_2|_{[0,T_{2,r}]} \right)$ makes sense even if one or both of $T_{1,r}$ or $T_{2,r}$ is infinite, provided we allow $\BB d^{\op{CMP}}\left(\beta_1|_{[0,T_{1,r}]} , \beta_2|_{[0,T_{2,r}]} \right) = \infty$ (this doesn't pose a problem due to the $1\wedge$ in~\eqref{eqn-cmp-metric-loc}).

\subsection{Space-filling SLE}
\label{sec-wpsf-prelim}

Space-filling SLE$_{\kappa}$ for $\kappa  > 4$ is a variant of SLE$_{\kappa}$~\cite{schramm0} which was introduced in~\cite[Section~1.2.3]{ig4} (see also~\cite[Section 1.4.1]{wedges} for the whole-plane case). Space-filling SLE$_\kappa$ for $\kappa \geq 8$ is the same as ordinary SLE$_\kappa$ (which is already space-filling~\cite{schramm-sle}), whereas space-filling SLE$_\kappa$ for $\kappa \in (4,8)$ can be obtained from ordinary SLE$_\kappa$ be iteratively filling in the ``bubbles" which it disconnects from its target point by space-filling SLE$_\kappa$ type curves to obtain a space-filling curve (which is not a Loewner evolution). 

We will now review the construction of whole-plane space-filling SLE$_{\kappa}$ from~$\infty$ to~$\infty$, which is the version we will use most frequently. The basic idea of the construction is to first construct the outer boundary of the curve stopped when it hits each fixed point $z\in\BB Q^2$ using a pair of flow lines of a Gaussian free field, then use these outer boundary curves to define a partial order on $\BB Q^2$ which can be extended to a continuous curve. 

Let $\ul\kappa  = 16/\kappa \in (0,4)$ and let $\chi^{\op{IG}} := 2/\sqrt{\ul\kappa} -\sqrt{\ul\kappa}/2$.  Let $h^{\op{IG}}$ be a whole-plane GFF viewed modulo a global additive multiple of $2\pi\chi^{\op{IG}}$, as in~\cite{ig4} (here IG stands for ``Imaginary Geometry" and is used to distinguish the field $h^{\op{IG}}$ from the field $h$ corresponding to an LQG surface).  

For $z\in \BB C$ and $\theta \in (0,2\pi)$, we can define the \emph{flow line} of $h^{\op{IG}}$ started from $z$ with angle $\theta$ as in~\cite[Theorem~1.1]{ig4}. This flow line is a whole-plane SLE$_{\ul\kappa}(2-\ul\kappa)$ curve from $z$ to $\infty$ which is a.s.\ determined by $h^{\op{IG}}$ (whole-plane SLE$_{\ul\kappa}(2-\ul\kappa)$ is a variant of SLE$_{\ul\kappa}$ which is defined rigorously in~\cite[Section~2.1]{ig4}). For $z\in\BB Q^2$, let $\eta_z^L$ and $\eta_z^R$ be the flow lines started from $z$ with angles $\pi/2$ and $-\pi/2$, respectively. These curves will be the left and right boundaries of $\eta$ stopped upon hitting $z$. 

For distinct $z,w \in \BB Q^2$, the flow lines $\eta_z^L$ and $\eta_w^L$ a.s.\ merge upon intersecting, and similarly with $R$ in place of $L$. The two flow lines $\eta_z^L$ and $\eta_z^R$ started at the same point a.s.\ do not cross, but these flow lines bounce off each other without crossing if and only if $\kappa \in (4,8)$~\cite[Theorem~1.7]{ig4}. 

We define a total order on $\BB Q^2$ by declaring that $z$ comes before $w$ if and only if $w$ is in a connected component of $\BB C\setminus (\eta_z^L\cup \eta_z^R)$ which lies to the right of $\eta_z^L$ (equivalently, to the left of $\eta_z^R$).  The whole-plane analog of~\cite[Theorem~4.12]{ig4} (which can be deduced from the chordal case; see~\cite[Footnote 4]{wedges}) shows that there is a well-defined continuous curve $\eta : \BB R\rta \BB C$ which traces the points of $\BB Q^2$ in the above order, is such that $\eta^{-1}(\BB Q^2)$ is a dense set of times, and is continuous when parameterized by Lebesgue measure, i.e., in such a way that $\op{area}(\eta([a,b])) =b-a$ whenever $a < b$. The curve $\eta$ is defined to be the \emph{whole-plane space-filling SLE$_{\kappa}$ from $\infty$ to $\infty$} associated with $h^{\op{IG}}$.

The topology of $\eta$ is rather simple when $\kappa \geq 8$. In this case, the left/right boundary curves $\eta_z^L$ and $\eta_z^R$ do not bounce off each other, so for $a < b$ the set $\eta([a,b])$ has the topology of a closed disk. In the case $\kappa \in (4,8)$, matters are more complicated. The curves $\eta_z^L$ and $\eta_z^R$ intersect in an uncountable fractal set and for $a<b$ the interior of the set $\eta([a,b])$ a.s.\ has countably many connected components, each of which has the topology of a disk. See Figure~\ref{fig-weird-cell}. 

A similar construction, which is explained in~\cite[Section~1.2.3]{ig4}, shows that if $h^{\op{IG}}$ is a GFF on a simply connected domain $\mcl D \subset\BB C$, $\mcl D\not=\BB C$, with appropriate Dirichlet boundary data and $x,y\in\bdy \mcl D$ then one can also define a chordal space-filling SLE$_{\kappa}$ from $x$ to $y$ in $\mcl D$. Taking a limit as $y\rta x$ from the clockwise (resp.\ counterclockwise) direction gives a counterclockwise (resp.\ clockwise) space-filling SLE$_{\kappa}$ loop in $\mcl D$ based at $x$ (see~\cite[Appendix A.3]{bg-lbm} for details). In the case when $\kappa \in (4,8)$, this latter curve can also be defined as the restriction of a whole-plane or chordal space-filling SLE$_{\kappa}$ to one of the time intervals during which it is filling in a ``bubble" which it disconnects from its target point \cite{ig4}.

\subsection{Liouville quantum gravity} 
\label{sec-lqg-prelim}

For $\gamma \in (0,2)$ and $k\in \BB N_0$, a \emph{$\gamma$-Liouville quantum gravity surface} with $k$ marked points is an equivalence class of $k+2$-tuples $(\mcl D,h,z_1,\dots,z_k)$ where $\mcl D\subset\BB C$ is an open domain, $h$ is a distribution on $\mcl D$ (which we will always take to be a realization of some variant of the Gaussian free field on $\mcl D$), and $z_1,\dots,z_k \in \mcl D\cup \bdy \mcl D$. Two such $k+2$-tuples $(\mcl D,h,z_1,\dots,z_k)$ and $(\wt{\mcl D} , \wt h , \wt z_1,\dots , \wt z_k)$ are declared to be equivalent if there is a conformal map $f : \wt{\mcl D} \rta \mcl D$ such that
\eqb \label{eqn-lqg-coord}
\wt h = h\circ f + Q\log |f'| \quad \op{and} \quad f(\wt z_j) = z_j  ,\quad \forall j\in [1,k]_{\BB Z}  \quad \text{where} \quad Q = \frac{2}{\gamma}  + \frac{\gamma}{2} .
\eqe
We call different choices of the distribution $h$ corresponding to the same LQG surface different \emph{embeddings} of the surface. The above definition first appeared in~\cite{shef-kpz}, and also plays an important role, e.g., in~\cite{shef-zipper,wedges}.

If the field $h$ above is locally absolutely continuous in law with respect to the Gaussian free field on $\mcl D$ (see~\cite{shef-gff} and the introductory sections of~\cite{ss-contour,shef-zipper,ig1,ig4} for more on the GFF), then we can define the \emph{$\gamma$-LQG area measure} $\mu_h$ on $\mcl D$, which is the a.s.\ limit of regularized versions of $e^{\gamma h(z)} \,dz$~\cite{shef-kpz} as well as the \emph{$\gamma$-LQG boundary length measure} $\nu_h$, which is a measure on certain curves in $\mcl D$, including $\bdy \mcl D$~\cite{shef-kpz} and SLE$_\kappa$ type curves for $\kappa =\gamma^2$ which are independent from $h$~\cite{shef-zipper}. If $h$ and $\wt h$ are related by a conformal map as in~\eqref{eqn-lqg-coord}, then $f_* \mu_{\wt h} = \mu_h$ and $f_* \nu_{\wt h} = \nu_{ h}$, so $\mu_h$ and $\nu_h$ can be viewed as measures on the LQG surface. We note that there is an alternative, more general approach for defining regularized measures of the above form building on the work of Kahane~\cite{kahane}; see~\cite{rhodes-vargas-review} for an overview of this theory.

In this paper, we will be interested in several different types of $\gamma$-LQG surfaces which are introduced in~\cite{wedges}.

\subsubsection{Quantum cones}

The most important LQG surface for our purposes is the \emph{$\alpha$-quantum cone} for $\alpha \in (-\infty,Q)$, which was first defined in~\cite[Definition~4.10]{wedges}. 
The $\alpha$-quantum cone is a doubly marked surface $(\BB C ,h , 0, \infty)$ whose $\gamma$-LQG measure $\mu_h$ has infinite total mass, but assigns finite mass to every bounded subset of $\BB C$. 
Roughly speaking, this quantum surface is obtained by starting with a whole-plane GFF plus $\alpha\log (1/|\cdot|)$ then ``zooming in" near the origin and re-scaling (i.e., adding a constant to the field) so that the quantum mass of the unit disk remains of constant order~\cite[Proposition~4.13(i)]{wedges}. 

We will primarily be interested in quantum cones with $\alpha \in \{0,\gamma\}$. 
The case $\alpha =\gamma$ is special since a GFF a.s.\ has a $-\gamma$-log singularity at a point sampled from its $\gamma$-LQG measure~\cite[Section 3.3]{shef-kpz}, so this surface can be thought of as describing the behavior of a general $\gamma$-LQG surface near a quantum typical point. Moreover, the $\gamma$-quantum cone arises in the mating-of-trees construction of SLE-decorated LQG~\cite[Theorems 1.9 and 1.1]{wedges}, and hence is involved in the a priori embedding of the $\gamma$-mated-CRT map. 
The 0-quantum cone describes the local behavior of a $\gamma$-LQG surface near a \emph{Lebesgue} typical point. 
This type of quantum cone will be important for us since it can be used to construct cell configurations which satisfy the hypotheses of Theorem~\ref{thm-general-clt-uniform} (Section~\ref{sec-0cone}).
 
We will need some properties of quantum cones which follow from the definition in~\cite[Definition 4.10]{wedges}, so we now recall this definition. Let $\alpha < Q$ and let $A : \BB R \rta \BB R$ be the process such that $A_t =B_t  + \alpha t$ for $t\geq 0$, where $B$ is a standard linear Brownian motion; and for $t < 0$, let $A_t = \wh B_{-t} + \alpha t$, where $\wh B$ is a standard linear Brownian motion conditioned so that $\wh B_t  + (Q-\alpha) t > 0$ for all $t> 0$, taken to be independent from $B$. 
We define $h$ to be the random distribution such that if $h_r(0)$ denotes the circle average of $h$ on $\partial B_r(0)$ (see~\cite[Section 3.1]{shef-kpz} for the definition and basic properties of the circle average), then $t\mapsto h_{e^{-t}}(0)$ has the same law as the process $A$; and $h - h_{|\cdot|}(0)$ is independent from $h_{|\cdot|}(0)$ and has the same law as the analogous process for a whole-plane GFF.

By the definition of an LQG surface, the distribution $h$ is only defined up to re-scaling (as we have fixed only two marked points), but we will almost always consider the particular choice of distribution $h$ defined just above. This choice of $h$ is called the \emph{circle average embedding}, and is characterized by the fact that $1 = \sup\{r > 0 : h_r(0) + Q\log r = 0\}$. The circle average embedding is particularly convenient since it has the property that $h|_{\BB D}$ agrees in law with the restriction to $\BB D$ of a whole-plane GFF plus $-\alpha\log |\cdot|$, normalized so that its circle average over $\bdy\BB D$ is 0. 
 
We will also use a certain special scale invariance property of the $\alpha$-quantum cone which we now describe.
Let $h$ be the circle-average embedding of an $\alpha$-quantum cone, $\alpha < Q$, and for $r > 0$ and $z\in \BB C$, let $h_r(z)$ be the circle average of $h$ over $\bdy B_r(z)$. For $b > 0$, let
\eqb \label{eqn-mass-hit-time}
R_b := \sup\left\{ r > 0 : h_r(0) + Q \log r = \frac{1}{\gamma} \log b \right\} 
\eqe 
where here $Q$ is as in~\eqref{eqn-lqg-coord}. That is, $R_b$ gives the largest radius $r > 0$ so that if we scale spatially by the factor $r$ and apply the change of coordinates formula~\eqref{eqn-lqg-coord}, then the average of the resulting field on $\partial \BB D$ is equal to $\gamma^{-1} \log b$. Note that $R_0 = 0$. It is easy to see from the above definition of $h$ (and is shown in~\cite[Proposition 4.13(i)]{wedges}) that for each fixed $b>0$, 
\eqb \label{eqn-cone-scale}
h \eqD h(R_b \cdot) + Q \log R_b -  \frac{1}{\gamma} \log b .
\eqe
By~\eqref{eqn-lqg-coord}, if we let $h^b$ be the field on the left side of~\eqref{eqn-cone-scale}, then a.s.\ $\mu_{h^b}(A) = b \mu_h(R_b^{-1} A)$ for each Borel set $A\subset \BB C$. In particular, typically $\mu_h(B_{R_b}) \asymp b$. 

We now record a basic estimate for the radii $R_b$ in~\eqref{eqn-mass-hit-time}.

\begin{lem} \label{lem-cone-hit-tail}
There is a constant $a = a(\alpha,\gamma)  > 0$ such that for each $b_2>b_1>0$ and each $C > 1$,
\begin{align} \label{eqn-cone-hit-tail}
&\BB P\left[ C^{-1} (b_2/b_1)^{ \tfrac{1 }{\gamma (Q-\alpha) }} \leq R_{b_2}/R_{b_1} \leq C (b_2/b_1)^{ \tfrac{1 }{\gamma (Q-\alpha)}} \right] \notag\\
&\qquad\geq 1  -   3 \exp\left( - \frac{  a  (\log C)^2 }{   \log (b_2/b_1) +  \log C   } \right)  .
\end{align}
\end{lem}
\begin{proof}   
By the definition of an $\alpha$-quantum cone given just above and the strong Markov property of Brownian motion, $ \log(R_{b_2}/R_{b_1})$ has the same law as the first time a standard linear Brownian motion with negative linear drift $-(Q-\alpha) t$ hits $ -  \frac{1}{\gamma} \log(b_2/b_1) $. Applying the Gaussian tail bound in the final inequality below, if we write $B(\cdot)$ for a standard Brownian motion then
\alb
&\BB P\left[  \log(R_{b_2}/R_{b_1}) >   \frac{ \log (b_2/b_1) }{\gamma (Q-\alpha)} + \log C \right]  \\
 \leq& \BB P\left[ B\left(  \frac{ \log (b_2/b_1) }{\gamma (Q-\alpha)}+ \log C \right) - \frac{1}{\gamma} \log(b_2/b_1) - (Q-\alpha) \log C >  - \frac{1}{\gamma} \log(b_2/b_1) \right]\\  
\leq& \exp\left( - \frac{ (Q-\alpha)^2  (\log C)^2 }{  2  \left( (\gamma(Q-\alpha))^{-1} \log (b_2/b_1) +  \log C  \right)        } \right).
\ale 
This gives the upper bound for $R_{b_2}/R_{b_1}$ in~\eqref{eqn-cone-hit-tail} for a suitable choice of $a$. For the lower bound, we use a similar argument as above together with the reflection principle to get that if $(\gamma(Q-\alpha))^{-1} \log (b_2/b_1) -  \log C  > 0$, then 
\alb
&\BB P\left[  \log(R_{b_2}/R_{b_1}) <  \frac{ \log (b_2/b_1) }{\gamma (Q-\alpha)} - \log C \right] \notag \\
&\qquad \leq 2 \exp\left( - \frac{ (Q-\alpha)^2 (\log C)^2 }{  2  \left( (\gamma(Q-\alpha))^{-1} \log (b_2/b_1) -  \log C  \right)       } \right) .
\ale
On the other hand, the left side equals $0$ if $(\gamma(Q-\alpha))^{-1} \log (b_2/b_1) -  \log C  \leq 0$. 
\end{proof}

\subsubsection{Quantum disks, spheres, and wedges.}

A \emph{quantum disk} is a finite-mass quantum surface $(\BB D , h)$ defined in~\cite[Definition~4.21]{wedges} (we will not need the precise definition here). One can consider quantum disks with specified boundary length $\nu_h(\BB D)$ or with specified boundary length and area $\mu_h(\BB D)$. It is natural to consider quantum disks with any number of marked boundary points and interior points sampled uniformly from $\nu_h$ and $\mu_h$, respectively, but we will typically work with quantum disks with a single marked boundary point.  

A \emph{(doubly marked) quantum sphere} is a quantum surface $(\BB C ,h , 0, \infty)$ introduced in~\cite[Definition~4.21]{wedges} with $\mu_h(\BB C ) < \infty$ (again, we will not need the precise definition here). The marked points 0 and $\infty$ correspond to $\pm\infty$ in the infinite cylinder in the parameterization considered in~\cite[Definition~4.21]{wedges}. Typically one considers a unit-area quantum sphere, which means we fix $\mu_h(\BB C) = 1$. Quantum spheres with other areas are obtained by re-scaling (equivalently, adding a constant to $h$). A singly marked quantum sphere is obtained by forgetting one of the marked points.

For $\alpha \leq Q$, an \emph{$\alpha$-quantum wedge} is a quantum surface $(\BB H , h , 0,\infty)$ defined in~\cite[Definition~4.5]{wedges} which has finite mass in every neighborhood of 0 but infinite total mass. It is the half-plane analog of the $\alpha$-quantum cone considered above. 

One can extend the definition of the $\alpha$-quantum wedge to the case when $\alpha \in (Q,Q+\gamma/2)$~\cite[Definition~4.15]{wedges}. In this case, an $\alpha$-quantum cone does not have the topology of the upper half-plane but instead is a Poissonian string of \emph{beads}, each of which is itself a finite-mass quantum surface homeomorphic to the disk with two marked boundary points. One can consider a single bead of an $\alpha$-quantum wedge with specified area $\frk a$ and left/right boundary lengths $\frk l_L$ and $\frk l_R$, which means we sample from the regular conditional law of the intensity measure on beads conditioned so that the total quantum mass is $\frk a$ and the arcs between the two marked points have quantum lengths $\frk l_L$ and $\frk l_R$, respectively.

\subsection{Mated-CRT maps with the plane and sphere topology}
\label{sec-mated-crt-map}

The special case of a mated-CRT map with the disk topology is defined in Section~\ref{sec-overview} and illustrated in Figure~\ref{fig::map}. For $n\in\BB N$, the mated-CRT map with the sphere topology with $n$ total vertices is defined in exactly the same way, except we replace $(L,R)$ with a pair of Brownian motions with variances and covariances as in~\eqref{eqn-bm-cov} conditioned to stay in the first quadrant for one unit of time and end up at $(0,0)$ (instead of at $(0,1)$), so that the boundary is empty. 

To define the mated-CRT map with the plane topology, we let $ (L_t ,R_t)_{t\in \BB R}$ be a correlated two-sided two-dimensional Brownian motion with variances and covariances as in~\eqref{eqn-bm-cov} (and no conditioning). For $n\in\BB N$, we then let $\mcl G^{1/n}$ be the map with vertex set $\frac{1}{n} \BB Z$, with two such vertices joined by an edge if and only if~\eqref{eqn-inf-adjacency0} holds. This is the variant of the mated-CRT map considered in~\cite{ghs-dist-exponent,ghs-map-dist,gm-spec-dim,gh-displacement}. Note that, by Brownian scaling, the law of $\mcl G^{1/n}$ (as a graph) does not depend on $n$, but it is convenient to view the maps $\{\mcl G^{1/n}\}_{n\in\BB N}$ as all being coupled together with the same Brownian motion.

\subsection{Structure graph and a priori embedding}
\label{sec-sg-def} 
  
The results of~\cite{wedges,sphere-constructions,ag-disk} imply that mated-CRT maps can be realized as cell configurations called \emph{structure graphs} constructed from space-filling SLE$_{\kappa }$ curves parameterized by quantum mass with respect to a certain independent LQG surface. In this subsection, we define these structure graphs and explain their relationship to mated-CRT maps.
 
Suppose we are in one of the following three cases, which correspond to the mated-CRT maps with the topology of the plane, sphere, and disk. 
\begin{enumerate}
\item \textbf{Plane:} $\mcl D = \BB C$; $(\BB C ,h , 0, \infty)$ is a $\gamma$-quantum cone, and $\eta : \BB R\rta\BB C$ is an independent whole-plane space-filling SLE$_{\kappa}$ from $\infty$ to $\infty$ parameterized by $\gamma$-quantum mass with respect to $h$ in such a way that $\eta(0) = 0$. \label{item-rw-conv-cone}
\item \textbf{Sphere:} $\mcl D = \BB C$; $(\BB C ,h ,  \infty)$ is a unit area quantum sphere, and $\eta : [0,1]\rta\BB C$ is an independent whole-plane space-filling SLE$_{\kappa}$ from $\infty$ to $\infty$ parameterized by $\gamma$-quantum mass with respect to~$h$.   \label{item-rw-conv-sphere}
\item \textbf{Disk:} $\mcl D = \BB D$; $(\BB D , h , 1)$ is a singly marked quantum disk with unit area and unit boundary length and $\eta : [0,1] \rta \ol{\BB D}$ is an independent chordal space-filling SLE$_{\kappa}$ loop in $\BB D$ based at 1 parameterized by $\gamma$-quantum mass with respect to $h$. \label{item-rw-conv-disk}  
\end{enumerate}

For $n\in\BB N$, we define the \emph{$1/n$-structure graph} $\mcl G^{1/n}$ associated with $h$ as follows. 
The vertex set of $\mcl G^{1/n}$ is
\eqb \label{eqn-sg-vertex-set}
\mcl V\mcl G^{1/n} := \left\{ x\in (\frac{1}{n} \BB Z) \cap \BB I   \right\} ,
\eqe 
where $\BB I = \BB R$ in case~\ref{item-rw-conv-cone} and $\BB I = (0,1]$ in cases~\ref{item-rw-conv-sphere} and~\ref{item-rw-conv-disk}. 
Two vertices $x_1,x_2\in \frac{1}{n} \BB Z$ are connected by an edge if and only if the corresponding cells $\eta([x_1-1/n ,x_1]  )$ and $\eta([x_2-1/n ,x_2] )$ intersect along a connected boundary arc. They are connected by two edges if these cells share both a left boundary arc and a right boundary arc. 
The following is proven in~\cite{wedges,sphere-constructions}.

\begin{figure}[ht!]
\begin{center}
\includegraphics[scale=.8]{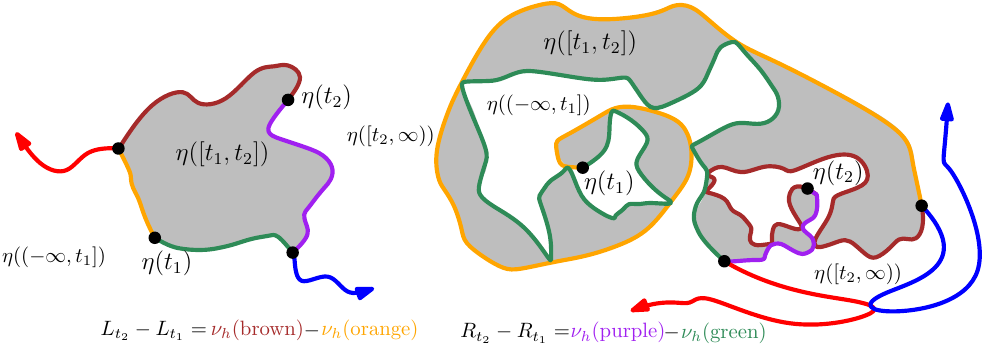} 
\caption{\label{fig-LR-def} Illustration of the definition of the left/right boundary length process $(L,R)$ in the whole-plane case. The left figure corresponds to the case when $\kappa \geq 8$, so that $\eta([t_1,t_2])$ is simply connected. The right figure corresponds to the case when $\kappa  \in (4,8)$, in which case the topology is more complicated, but the definition of $(L,R)$ is the same. In both cases, the intersection of the left (resp.\ right) outer boundaries of $\eta((-\infty,t_1])$ and $\eta([t_2,\infty))$ is shown in red (resp.\ blue).
}
\end{center}
\end{figure}

\begin{prop} \label{prop-a-priori-embedding}
In case~\ref{item-rw-conv-cone},~\ref{item-rw-conv-sphere}, and~\ref{item-rw-conv-disk}, respectively, the family of structure graphs $\{\mcl G^{1/n}\}_{n\in\BB N}$ agrees in law with the family of $1/n$-mated-CRT maps with the plane, sphere, or disk topology, as defined in Section~\ref{sec-mated-crt-map}. In case~\ref{item-rw-conv-disk}, the boundary $\bdy \mcl G^{1/n}$ as defined in~\eqref{eqn-mated-crt-map-bdy} is precisely the set of $y\in (\frac{1}{n}\BB Z)\cap (0,1]$ for which $\eta([y-1/n,y]) \cap \bdy\BB D\not=\emptyset$. 
\end{prop}
\begin{proof}
Let us first consider the whole-plane case~\ref{item-rw-conv-cone}. 
For $t\in\BB R$, let $L_t$ (resp.\ $R_t$) be the net change in the $\gamma$-quantum length of the left (resp.\ right) outer boundary of $\eta$ between times 0 and $t$, so that, e.g., for $t\geq 0$, 
\alb
L_t &=\nu_h\left( \text{left boundary of $ \eta([0,t]) \cap \eta([t,\infty))$} \right) \notag \\
&\qquad - \nu_h\left( \text{left boundary of $ \eta([0,t]) \cap \eta((-\infty,0])$} \right)  
\ale
and similar formulas hold for $t\leq 0$ and/or with $R$ in place of $L$. See Figure~\ref{fig-LR-def} for an illustration.
It is shown in~\cite[Theorem~1.9]{wedges} (and~\cite[Theorem 1.1]{kappa8-cov} in the case $\gamma  < \sqrt 2$) that $(L_t,R_t)_{t\in\BB R}$ has the law of a two-sided correlated two-dimensional Brownian motion with $\op{Corr}(L_t,R_t)  = -\cos(\pi\gamma^2/4)$ for each $t\in\BB R\setminus \{0\}$, i.e., $(L,R)$ is exactly the Brownian motion used to define the whole-plane mated-CRT map. 

It can be checked using the above definition of $(L,R)$ that two cells $\eta([x_1-1/n , x_1])$ and $\eta([x_2-1/n,x_2])$ for $x_1,x_2 \in \frac{1}{n} \BB Z $ intersect along a non-trivial connected boundary arc if and only if the mated-CRT map adjacency condition~\eqref{eqn-inf-adjacency0} holds; see~\cite[Section 8.2]{wedges}. The conditions in terms of $L$ and $R$ correspond to intersections along left or right boundary arcs, respectively.
Thus the proposition statement holds in the whole-plane case. 

The sphere case follows similarly using~\cite[Theorem~1.1]{sphere-constructions}. The disk case follows similarly follows from~\cite[Theorem~1.1]{ag-disk} (see~\cite[Theorem~2.1]{sphere-constructions} for an earlier proof in the special case when $\gamma\in(\sqrt 2 , 2)$). 
\end{proof}

Proposition~\ref{prop-a-priori-embedding} gives us an a priori embedding of the mated-CRT map into $\mcl D$ by sending each $x\in \mcl V\mcl G^{1/n}$ to the point $\eta(x) \in \BB C$. To prove Theorem~\ref{thm-tutte-conv0}, we will show that in the disk case this a priori embedding is close to the Tutte embedding of the mated-CRT map when $n$ is large. 
 
\begin{remark} \label{remark-other-disk}
Proposition~\ref{prop-a-priori-embedding} gives rise to a priori embeddings of a much more general class of mated-CRT map variants, as we now explain.
It is shown in~\cite[Theorem 1.11]{wedges} that the Brownian motion $(L,R)$ a.s.\ determines the pair $(h,\eta)$ modulo rotation and scaling. 
In fact, the Brownian motion $(L,R)$ determines $(h,\eta)$ locally in the sense that for $[a,b]\subset \BB R$, the quantum surface obtained by restricting $h$ to $\eta([a,b])$ and the curve $\eta|_{[a,b]}$, viewed as a curve on this quantum surface, are a.s.\ determined by $((L,R) - (L_a,R_a))|_{[a,b]}$ (however, $\eta([a,b])$ itself is not a.s.\ determined by $((L,R) - (L_a,R_a))|_{[a,b]}$). This is implicit in the proof of~\cite[Theorem~1.11]{wedges} and is proven explicitly in, e.g.,~\cite[Lemma~3.13]{ghs-bipolar}. This implies that if $(\wt L , \wt R)  : [ a_0 ,b_0] \rta \BB R$ is a process such that the law of $((\wt L , \wt R)  - (\wt L_a , \wt R_a))|_{[a,b]}$ is absolutely continuous with respect to the law of $((L,R) - (L_a,R_a))|_{[a,b]}$ for each interval $[a,b] \subset (a_0 ,b_0)$, then one can construct from $(\wt L ,\wt R)$ a random measure space with a conformal structure decorated by a space-filling curve parameterized by the mass of the region it fills in which locally looks like a $\gamma$-LQG surface decorated by an independent space-filling SLE$_{\kappa}$ curve. Moreover, the $1/n$-structure graphs associated with this curve will be the same as the $1/n$-mated-CRT maps defined as in~\eqref{eqn-inf-adjacency0} with $(\wt L , \wt R)$ in place of $(L,R)$. 
\end{remark}

\section{Proofs of main results}
\label{sec-embedding}
 
In this section we will prove the main results stated in Section~\ref{sec-intro}, modulo an SLE/LQG estimate which is proven in Section~\ref{sec-moment}. We start in Section~\ref{sec-0cone} by showing that random walk on the structure graph associated with space-filling SLE$_\kappa$ on a 0-quantum cone converges to Brownian motion. This is done by checking the hypotheses of Theorem~\ref{thm-general-clt-uniform}. We then want to transfer this from the 0-quantum cone to other $\gamma$-LQG surfaces---in particular, the ones involved in the a priori embeddings of mated-CRT maps discussed in Section~\ref{sec-sg-def}---using local absolute continuity. To this end, we need a technical lemma to the effect that locally absolutely continuous GFF-type distributions give rise to locally absolutely continuous structure graphs, which we prove in Section~\ref{sec-cell-abs-cont}. Section~\ref{sec-rw-conv} contains the proof of Theorem~\ref{thm-rw-conv}, which states that random walk on the a priori embedding of the mated-CRT map with the plane, sphere, or disk topology converges to $\gamma$-LQG. We then use this to prove Theorem~\ref{thm-tutte-conv0} in Section~\ref{sec-embedding-proof}.

\subsection{Cell configuration arising from a 0-quantum cone}
\label{sec-0cone}

The purpose of this subsection is to transfer from the general setting of Theorem~\ref{thm-general-clt-uniform} to the particular setting of SLE-decorated Liouville quantum gravity. 
For this, we need to work with an LQG surface wherein the origin is a Lebesgue typical point (due to hypothesis~\ref{item-hyp-resampling} of Theorem~\ref{thm-general-clt-uniform}) instead of working directly with the LQG surfaces described in Section~\ref{sec-sg-def}. 

Let $(\BB C ,h , 0, \infty)$ be a 0-quantum cone (as discussed in Section~\ref{sec-lqg-prelim}). For concreteness we assume that $h$ is given the circle average embedding, as in~\cite[Definition 4.10]{wedges}. Let $\eta$ be a whole-plane space-filling SLE$_{\kappa}$ sampled independently from $h$ and then parameterized by $\gamma$-LQG mass with respect to $h$ in such a way that $\eta(0)=0$. Let $\indshift$ be sampled uniformly from $[0,1]$, independently from everything else, and let
\eqb \label{eqn-0cone-cells}
\mcl H = \left\{ \eta([x-1,x]) : x\in \BB Z + \indshift\right\} .
\eqe 
For $H,H' \in \mcl H$ with $H\not=H'$, we declare that $H\sim H'$ if and only if $H$ and $H'$ intersect along a non-trivial boundary arc. 
We set $\frk c(H,H') = 2$ if $H$ and $H'$ intersect along both their left and right boundaries (this corresponds to a double edge of the mated-CRT map) and otherwise we set $\frk c(H,H')  =1$. 
Then $\mcl H$ is a cell configuration (Definition~\ref{def-cell-config}). (The reason for including the index shift $\indshift$ in~\eqref{eqn-0cone-cells} is to avoid making $0 = \eta(0)$ a special point; this is necessary in order to check that hypothesis~\ref{item-hyp-resampling} is satisfied).

\begin{prop} \label{prop-0cone}
The conclusion of Theorem~\ref{thm-general-clt-uniform} holds for the cell configuration $\mcl H$ above. Moreover, the covariance matrix $\Sigma$ of the limiting Brownian motion is a positive scalar multiple of the identity matrix.
\end{prop}
\begin{proof}
We will check that the hypotheses of Theorem~\ref{thm-general-clt-uniform} are satisfied. Our proofs of two of the hypotheses (ergodicity modulo scaling and coordinate change) use results which we prove later, in Section~\ref{sec-moment} and Appendix~\ref{sec-ergodicity}.
\medskip

\noindent\textbf{Translation invariance modulo scaling.} For $j\in\BB N$, let $R_j$ be the largest $r> 0$ for which $h_r(0) + Q\log r =  \gamma^{-1} \log j$, where $h_{r}(0)$ denotes the circle average, as in~\eqref{eqn-mass-hit-time}. We will check the needed resampling property for $U_j = B_{R_j}(0)$. By~\eqref{eqn-cone-scale}, the field $h^j := h(R_j\cdot) + Q\log R_j - \gamma^{-1} \log j$ agrees in law with $h$. In particular, by the discussion just after~\cite[Definition 4.10]{wedges}, $h^j|_{\BB D}$ agrees in law with the corresponding restriction of a whole-plane GFF, normalized so that its circle average over $\bdy\BB D$ is 0. Consequently, if we sample $z_j$ uniformly from Lebesgue measure on $B_{R_j}(0)$, then the proof of~\cite[Proposition 4.13(ii)]{wedges} along with the translation invariance of the law of the whole-plane GFF, modulo additive constant, shows that $\mu_h(\cdot-z_j)$ converges in law to $\mu_h$ modulo scaling as $j\rta\infty$ in the local total variation sense. That is, there is a sequence of random constants $C_j\rta\infty$ such that for each $r>0$, the restriction to $B_r(0)$ of $\mu_h(C_j(\cdot-z_j)) = \mu_{h(C_j(\cdot-z_j)) + Q\log C_j}$ converges in the total variation sense to $\mu_h|_{B_r(0)}$. Since the law of $\eta$, viewed modulo time parameterization, is invariant under translation and scaling, we find that also $C_j(\mcl H-z_j)$ converges in law to $\mcl H$ as $j\rta\infty$ (here we also use that if $\tau_z$ is the first time that $\eta$ hits $z\in\BB C$, then the fractional part of $\indshift-\tau_z$ is uniform on $[0,1]$). 
\medskip

\noindent\textbf{Ergodicity modulo scaling.} 
Let $F = F(\mcl H)$ be a bounded measurable function of $\mcl H$ satisfying $F(C(\mcl H - z)) = F(\mcl H)$ for every $C > 0$ and $z\in\BB C$. 
We need to show that $F$ is a.s.\ equal to a deterministic constant.
Write $\wh\eta$ for the space-filling SLE curve $\eta$, viewed modulo time parametrization.  
By definition, $\mcl H$ is a measurable function of $h , \wh\eta$, and $\theta$. 
We can therefore write $F = G(h ,\wh\eta , \theta) $ for some measurable function $G$. 

We now express the scale and translation invariance property of $F$ in terms of $G$. 
For $z\in\BB C$, let $\tau_z$ be the first time that $\eta$ hits $z$. Then, by the definition of $\mcl H$ and the LQG coordinate change formula, for every $C > 0$ and $z\in\BB C$,  
\eqb \label{eqn-G-invariant}
G\left( h(C^{-1} \cdot  +z) + Q\log C^{-1} , C \wh\eta  +z ,  [\theta - \tau_z] \right) 
= G(h ,  \wh\eta  , \theta) ,
\eqe 
where $[x] := \lceil x \rceil - x$. 

In Lemma~\ref{lem-cone-trivial} below, we show that any measurable functional of $h$ which is invariant under $h \mapsto h(C^{-1} \cdot  +z) + Q\log C^{-1}$ for $C> 0$ and $z\in\BB C$ is a.s.\ equal to a deterministic constant. 
We want to use this to argue that $G$ is a.s.\ equal to a deterministic constant. 
To do this we need to reduce to looking at a function which depends only on $h$. 

We first eliminate the dependence on $\theta$ using an averaging trick. 
Let $\phi : \BB R\rta \BB R$ be a bounded measurable function and define
\eqbn
G'  =G'(h,\wh\eta) 
:= \BB E\left[ \phi(G) \,|\, h ,  \wh\eta \right] 
= \int_0^1 (\phi\circ G)(h,\wh\eta , t ) \, dt .
\eqen
For each fixed $z$, the mapping $t \mapsto  [t - \tau_z]$ is measure-preserving from $[0,1]$ to $[0,1]$. Consequently,
\allb \label{eqn-Gbar-invariant}
&G' \left( h(C^{-1} \cdot  +z) + Q\log C^{-1} , C \wh\eta + z \right) \notag\\
&\qquad = \int_0^1  (\phi\circ G) \left( h(C^{-1} \cdot  +z) + Q\log C^{-1} , C \wh\eta+ z , t \right) \,dt \notag\\ 
&\qquad= \int_0^1  (\phi\circ G)\left( h(C^{-1} \cdot  +z) + Q\log C^{-1} , C \wh\eta + z , [t-\tau_z] \right) \,dt \notag\\
&\qquad= \int_0^1 (\phi\circ G)\left( h  ,  \wh\eta  , t \right) \,dt \quad \text{(by~\eqref{eqn-G-invariant})} \notag\\
&\qquad= G'(h , \wh\eta) .
\alle
That is, $G'$ is a translation and scale invariant functional of the pair $(h, \wh\eta)$. 

We next eliminate the dependence on $\wh\eta$ using a similar trick. Let $\phi' : \BB R\rta \BB R$ be another bounded measurable function and let 
\eqbn
G'' = G''(h) = \BB E\left[ \phi'(G') \,|\, h \right] .
\eqen 
For a given realization of $h$ and $C>0$ and $z\in\BB C$, we can construct $G''(h(C^{-1} \cdot + z) + Q\log C^{-1} )$ by first sampling a whole-plane space-filling SLE$_\kappa$ $\wh\eta'\eqD \wh \eta$, then averaging $(\phi'\circ G')(h(C^{-1} \cdot + z) + Q\log C^{-1} , \wh\eta')$ w.r.t.\ the law of $\wh\eta'$. 
Since the law of $\wh\eta$ is scale and translation invariant, we have $\wh\eta' \eqD C \wh \eta + z$. 
Therefore, $G''$ is also given by averaging $(\phi'\circ G')(h(C^{-1} \cdot + z) + Q\log C^{-1} , C\wh\eta +z )$ w.r.t.\ the law of $\wh\eta$. 
Hence by~\eqref{eqn-Gbar-invariant}, we obtain $G''(h(C^{-1} \cdot + z) + Q\log C^{-1}) = G''(h) $. 

By Lemma~\ref{lem-cone-trivial}, $G''$ is a.s.\ equal to a deterministic constant. This holds for any possible choice of $\phi'$, so $G'$ is independent from $h $. Therefore, $G'$ is a.s.\ equal to a measurable function of only $\wh\eta$.
Recall from Section~\ref{sec-wpsf-prelim} that $\wh\eta$ is constructed from a whole-plane GFF $h^{\op{IG}}$. From this construction one gets that replacing $h^{\op{IG}}$ by $h^{\op{IG}}(C^{-1}(\cdot + z))$ corresponds to replacing $\wh\eta$ by $C\wh\eta+z$. 
It is easy to see using the orthonormal basis expansion of the GFF and the zero-one law for translation invariant events that the tail $\sigma$-algebra generated by the functions of $h^{\op{IG}}$ which are invariant under replacing $h^{\op{IG}}$ by $h^{\op{IG}}(C^{-1}\cdot + z)$ is trivial (see Remark~\ref{remark-gff-trivial}). 
Therefore, $G'$ is a.s.\ equal to a deterministic constant. 
Since the bounded measurable function $\phi$ used to define $G'$ is arbitrary, we infer that $G$ is independent from $h$ and $h^{\op{IG}}$, so $G$ is a.s.\ equal to a function of only $\theta$. 
By~\eqref{eqn-G-invariant}, we have $G([\theta-\tau_z]) = G(\theta)$ for each $z\in\BB C$. Since $[\theta-\tau_z]$ ranges over all of $[0,1]$ as $z$ ranges over $\BB C$, we get that $G$ is a.s.\ equal to a deterministic constant.  
\medskip

\noindent\textbf{Finite expectation.} Since our conductances are all equal to 1 or 2,~\eqref{eqn-hyp-moment} is equivalent to\\
$\BB E\left[ \op{deg}(H_0) \op{diam}(H_0)^2/\op{area}(H_0)\right] < \infty$, where $\op{deg}$ denotes the degree (number of incident edges). The proof of this estimate is given in Section~\ref{sec-moment} (in fact, we get finite moments up to order $4/\gamma^2$). 
\medskip

\noindent\textbf{Connectivity along lines.} Obviously, any two points on a line in $\BB C$ can be joined by a path of cells which intersect at at least one point (not necessarily along a non-trivial boundary arc). If $H$ and $H'$ are cells which intersect at a point $z$, but do not share a non-trivial boundary arc, then we can find a finite number of cells $H = H_0 , H_1 , \dots , H_N = H'$ such that each $H_j$ contains $z$ and $H_j$ and $H_{j-1}$ share a non-trivial boundary arc for each $j=1,\dots,N$ (see Figure~\ref{fig-weird-cell}). Indeed, this follows from the fact that space-filling SLE can hit a fixed point at most a deterministic constant number of times~\cite[Section 8.2]{wedges}. Combining these facts gives the desired connectivity property. 
\medskip 

\noindent The covariance matrix $\Sigma$ is a scalar multiple of the identity since the law of $(h,\eta)$ is invariant under rotations about the origin. 
\end{proof}

\subsection{Local absolute continuity for space-filling SLE cells}
\label{sec-cell-abs-cont}

The purpose of this brief subsection is to address the following technical difficulty. 
The cells of the structure graphs $\mcl G^{1/n}$ constructed from a GFF-type distribution $h$ and an independent space-filling SLE$_{\kappa}$ curve $\eta$ parameterized by $\gamma$-LQG mass as in Section~\ref{sec-sg-def} are \emph{not} locally determined by $h$ and $\eta$. Indeed, if $z\in\BB C$ and $U\subset \BB C$ is an open set containing $z$, then the cell of $\mcl G^{1/n}$ containing $z$ is only determined by $h|_U$ and $\eta$ (viewed modulo time parameterization) by an index shift: it could be any of the sets $\eta([t-{1/n},t])$ for $t$ in some ${1/n}$-length interval of time. The index shifts could be different for different points $z,w$ in $U$ if $\eta$ exits $U$ between hitting $z$ and $w$. 

Nevertheless, we will have occasion to compare the local structure graphs associated with two different GFF's whose laws are locally mutually absolutely continuous. In this subsection we will prove a lemma which allows us to do so.

Let $\mcl D \subset \BB C$ be a simply connected open set (possibly all of $\BB C$) and let $U\subset U'\subset \mcl D$ be open sets with $\ol U \subset U'$. 
Let $\mu_1,\mu_2$ be random non-atomic locally finite Borel measures on $\mcl D$ which assign positive mass to every open subset of $\mcl D$. Assume that the laws of the restrictions $\mu_1|_{U'}$ and $\mu_2|_{U'}$ are each mutually absolutely continuous with respect to the law of the corresponding restriction of the $\gamma$-LQG measure induced by a (free or zero boundary) GFF on $\mcl D$. 

Let $\eta$ be a space-filling SLE$_{\kappa}$ in $\mcl D$ whose starting and target points are in $\ol{\mcl D} \setminus U'$ (if $\mcl D\not=\BB C$) or are each equal to $\infty$ (if $\mcl D=\BB C$), sampled independently of $\mu_1,\mu_2$ and parameterized by Lebesgue measure. Let $\eta_1 : [0,T_1]\rta \ol{\mcl D}$ (resp.\ $\eta_2 : [0,T_2]\rta\ol{\mcl D}$) be the curve obtained by parameterizing $\eta$ by $\mu_1$- (resp.\ $\mu_2$-mass). 

For $i\in \{1,2\}$, let $\indshift_i \in [0,1]$ be a random variable determined by $\mu_i|_{\mcl D\setminus U'}$ and for $i\in \{1,2\}$, ${1/n} \in (0,1]$, and $z\in \ol{\mcl D}$, let $H_{z,i}^{1/n}$ be the cell $\eta([x-{1/n},x])$ where $x = x_{z,i}^{1/n} \in \frac{1}{n} \BB Z+ \frac{1}{n} \indshift_i$ is chosen so that $z \in \eta([x-{1/n},x] \cap [0,T_i])$ (we make an arbitrary choice if there is more than one such $x$).

\begin{defn}[Uniform absolute continuity in the limit] \label{def-umac-limit}
Let $\{P_1^n\}_{n \in \BB N}$ and $\{P_2^n\}_{n \in \BB N}$ be two sequences of probability measures such that for each $n\in\BB N$, $P_1^n$ and $P_2^n$ are defined on a common measure space $(\Omega^n , \mcl M^n)$. We say that $\{P_1^n\}_{n\in\BB N}$ is \emph{uniformly absolutely continuous as $n\rta\infty$} with respect to $\{P_2^n\}_{n\in\BB N}$ if for each $p \in (0,1)$, there exists $q = q(p) \in (0,1)$ and $n_* = n_*(p) > 0$ such that for each $n\geq n_*$ and each measurable event $E \subset \Omega^n $ with $P_2^n[E] \leq p$, we have $P_1^n[E] \leq q$. We say that $\{P_1^n\}_{n \in \BB N}$ and $\{P_2^n\}_{n \in \BB N}$ are \emph{uniformly mutually absolutely continuous} as $n\rta\infty$ if each of $\{P_1^n\}_{n \in \BB N}$ and $\{P_2^n\}_{n \in \BB N}$ is uniformly absolutely continuous with respect to each other as $n\rta\infty$. 
\end{defn}

\begin{lem} \label{lem-cell-abs-cont}
The laws of the cell configurations
\eqb \label{eqn-sle-cell-config}
\left\{ H_{z,1}^{1/n} \right\}_{z\in U} \quad \op{and} \quad \left\{H_{z,2}^{1/n} \right\}_{z\in U}
\eqe 
are uniformly mutually absolutely continuous as $n\rta\infty$. 
\end{lem}
\begin{proof}
Let $\sigma_0$ be the first time $\eta$ enters $\ol U'$. Inductively, if $k \in\BB N$ and $\sigma_{k-1}$ has been defined, let $\tau_k$ be the first time after $\sigma_{k-1}$ at which $\eta$ enters $\ol U$ (or $\tau_k=\infty$ if no such time exists) and let $\sigma_k$ be the first time after $\tau_k$ at which $\eta$ exits $U'$ (or $\sigma_k=\infty$ if no such time exists). 
Since $\eta$ is continuous, the integer $K+1:= \min\{k\in\BB N:  \tau_k = \infty\}$ is a.s.\ finite. 
 
For $k\in [1,K]_{\BB Z}$, let $A_k := \eta([ \sigma_{k-1} ,\tau_k]) \subset \mcl D \setminus U$ and let $\mcl A := \bigcup_{k=1}^K A_k$.
For $n\in\BB N$, $i\in \{1,2\}$, and $k\in[1,K]_{\BB Z}$ let $s_{k,i}^n \in [0,1/n)$ be chosen so that $\mu_i(A_k) - s_{k,i}^{1/n} \in \frac{1}{n} \BB Z + \frac{1}{n} \indshift_i$. 
On the event that each of the cells $H_{z,i}^{1/n}$ for $z\in U$ is contained in $U'$ (which holds with probability tending to 1 as $n\rta\infty$ since $\eta$ is continuous and $\mu_i$ assigns positive mass to every open set), the cell configuration $\left\{ H_{z,1}^{1/n}  \right\}_{z\in U}$ is a.s.\ determined by $\eta$, $\mu_i|_{U' \setminus \mcl A}$, and $\{s_{k,i}^n \}_{k\in [1,K]_{\BB Z}}$, in a deterministic manner which is the same for each $i\in \{1,2\}$. By assumption, the conditional laws of $\mu_1|_{U'\setminus \mcl A}$ and $\mu_2|_{U'\setminus \mcl A}$ given $\eta$ are mutually absolutely continuous, so we just need to check that the conditional laws of $\{s_{k,i}^n\}_{k\in [1,K]_{\BB Z}}$ given $\eta$ and $\mu_i|_{U' \setminus \mcl A}$ are mutually absolutely continuous as $n \rta\infty$. 

We will show that each of these conditional laws is mutually absolutely continuous with respect to Lebesgue measure on $[0,1/n]^K$. 
Indeed, the law of the restriction of $\mu_i$ to $U'$ is mutually absolutely continuous with respect to a $\gamma$-LQG measure by hypothesis and each $A_k$ intersects $U'$ in an open set (this is because $\eta$ covers an open subset of $\mcl D$ in any non-trivial time interval). Using this, one can check via the spatial Markov property of the GFF that for each $i\in \{1,2\}$, under the conditional law given $\eta$ and $\mu_i|_{\mcl D\setminus ( \mcl A \cap U')}$, the law of the $K$-tuple $(\mu_i(A_1),\dots,\mu_i(A_K))$ is mutually absolutely continuous with respect to Lebesgue measure on some open subset of $(0,\infty)^K$ (possibly depending on $\eta$ and $\mu_i|_{\mcl D\setminus \mcl A}$). From this and the fact that $\indshift_i$ is determined by $\mu_i|_{\mcl D\setminus U'}$, it follows that the conditional law of $\{s_{k,i}^n \}_{k\in [1,K]_{\BB Z}}$ is mutually absolutely continuous with respect to Lebesgue measure on $[0,1/n]^K$ as $n\rta\infty$. 
\end{proof}

\subsection{Random walk on the embedded mated-CRT map converges to Brownian motion}
\label{sec-rw-conv}

In this subsection we use Proposition~\ref{prop-0cone} and absolute continuity arguments to show that the random walk on the a priori embedding of the mated-CRT map with the plane, sphere, or disk topology, as described in Section~\ref{sec-sg-def}, converges to Brownian motion modulo time parameterization in the quenched sense. Recall that $\eta $ is a space-filling SLE$_{\kappa}$ curve on an appropriate type of $\gamma$-LQG surface, parameterized by $\gamma$-LQG mass. Also recall that $\mcl G^{1/n}$ is the corresponding mated-CRT map, or equivalently the adjacency graph of cells $\eta([x-1/n,x])$, with two cells considered adjacent if they intersect along a non-trivial connected boundary arc. 

\begin{thm} \label{thm-rw-conv}
Suppose we are in one of the three settings listed at the beginning of Section~\ref{sec-sg-def}. For $z\in\BB C$ and $n\in\BB N$, write $x_z^n$ for the smallest $x \in \frac{1}{n} \BB Z$ for which $z\in \eta([x-1/n,x])$. 
For $z\in \ol{\mcl D}$ and $n\in\BB N$, let $Y^{z,n}  $ be a random walk on $\mcl G^{1/n}$ started from $Y_0^{z,n} = x_z^n$, and stopped when it hits $\bdy\mcl G^{1/n}$ in the disk case and run for all time in the sphere and plane cases.
Let $\wh Y^{z,n} := \eta(Y^{z,n})$ and extend $\wh Y^{z,n}$ to a function from $[0,\infty)$ to $\ol{\mcl D}$ by piecewise linear interpolation at constant speed.  

For each deterministic compact set $K\subset \ol{\mcl D}$, the supremum over all $z\in K$ of the Prokhorov distance between the conditional law of $\wh Y^{z,n} $ given $(h,\eta  )$ and the law of a standard two-dimensional Brownian motion started from $z$ (and stopped when it hits $\bdy \BB D$ in the disk case), with respect to the metric on curves viewed modulo time parameterization (i.e., the metric~\eqref{eqn-cmp-metric} in the disk case or the metric~\eqref{eqn-cmp-metric-loc} in plane or sphere cases) converges to 0 in probability as $n \rta\infty$.  
\end{thm} 
 
We note that, as discussed just after the statement of Theorem~\ref{thm-quenched-clt0}, one can easily transfer Theorem~\ref{thm-rw-conv} to other $\gamma$-LQG surfaces decorated by SLE$_{\kappa}$-type curves using local absolute continuity. 

We now commence with the proof of Theorem~\ref{thm-rw-conv}. We first consider the case when $h$ is replaced by a 0-quantum cone, which follows almost immediately from Proposition~\ref{prop-0cone}.
Note that this is the point in the proof where we move from a.s.\ convergence to convergence in probability since we switch from spatial scaling to scaling the lengths of the time intervals corresponding to the cells.

\begin{lem} \label{lem-rw-conv-0cone}
Theorem~\ref{thm-rw-conv} is true in case~\ref{item-rw-conv-cone} with a 0-quantum cone in place of the $\gamma$-quantum cone and with the $\frac{1}{n} \BB Z$ replaced by $\frac{1}{n} \BB Z +\frac{1}{n} \indshift$ in~\eqref{eqn-sg-vertex-set}, where $\indshift \in [0,1]$ is a uniform random variable independent from everything else. 
\end{lem}
\begin{proof}
By Brownian scaling the statement of the lemma is invariant under the operation of changing the embedding $h$ (i.e., replacing $h$ by $h(r\cdot)  + Q\log r$ for some possibly random $r>0$), so we can assume without loss of generality that $h$ has the circle-average embedding, as described in Section~\ref{sec-lqg-prelim} and~\cite[Definition 4.10]{wedges} (we could also, e.g., embed so that $\mu_h(\BB D ) =1$). 

Let $\mcl H$ be the cell configuration as in~\eqref{eqn-0cone-cells}. 
Proposition~\ref{prop-0cone} together with Theorem~\ref{thm-general-clt-uniform} tells us that a.s.\ the conditional law given $\mcl H$ of the random walk on $\ep\mcl H$ converges in law as $\ep\rta0$ to standard two-dimensional Brownian motion modulo time parameterization, and the convergence is uniform over all starting points in any fixed compact subset of $\BB C$. 

The graph of cells $\mcl H$ is isomorphic to the graph $\mcl G^1$ via the map $x \mapsto \eta([x-1,x])$. However, $\mcl G^{1/n}$ is typically \emph{not} isometric to $\ep  \mcl H$ for any $\ep > 0$. Nevertheless, we can use scale invariance \emph{in law} to compare $\mcl G^{1/n}$ with $\ep \mcl H$ for a certain random choice of $\ep $, as we now explain. 

For $b > 0$, let $R_b > 0$ be as in~\eqref{eqn-mass-hit-time} and let $h^b := h(R_b\cdot) + Q\log R_b - \gamma^{-1} \log b$, so that by~\eqref{eqn-cone-scale}, $h^b\eqD h$. If we set $\eta^b := R_b^{-1}\eta(b \cdot)$, then the $\gamma$-LQG coordinate change formula~\cite[Proposition 2.1]{shef-kpz} implies that $\eta^b$ is parameterized by $\gamma$-LQG mass with respect to $h^b$. Therefore, $(h^b,\eta^b) \eqD (h,\eta)$. 
 
If we define $\mcl H^b$ as above with $(h^b,\eta^b)$ in place of $(h,\eta)$, then $\mcl H^b \eqD \mcl H$ and $\mcl G^b$ is isometric to $R_b \mcl H^b$ via $x\mapsto R_b \eta^b([x-b,x])$. It is easily seen from the definition of the 0-quantum cone in Section~\ref{sec-lqg-prelim} that a.s.\ $R_b \rta 0$ as $b\rta 0$. Setting $b = 1/n$, we now get the desired convergence in probability from Proposition~\ref{prop-0cone} and Theorem~\ref{thm-general-clt-uniform}.
\end{proof}

We next transfer from the 0-quantum cone to the $\gamma$-quantum cone using local absolute continuity.

\begin{proof}[Proof of Theorem~\ref{thm-rw-conv} in case~\ref{item-rw-conv-cone}]
As in the proof of Lemma~\ref{lem-rw-conv-0cone}, we work with the circle-average embedding of the $\gamma$-quantum cone, which has the property that $h|_{\BB D}$ agrees in law with the corresponding restriction of a whole-plane GFF plus $-\gamma \log |\cdot|$, normalized so that its circle average over $\bdy\BB D$ is 0. We also let $\wh h$ be the circle-average embedding of a 0-quantum cone in $(\BB C, 0,\infty)$, so that $\wh h|_{\BB D} \eqD (h +\gamma\log|\cdot|)|_{\BB D}$. 

The statement of the lemma is essentially a consequence of Lemma~\ref{lem-rw-conv-0cone} and local absolute continuity (in the form of~\cite[Proposition 3.4]{ig1}), but a little care is needed since we only have local absolute continuity between the laws of a $h$ and $\wh h$ on domains at positive distance from 0 (due to the $\gamma$-log singularity of $h$) and from $\bdy \BB D$ (due to our choice of embedding). Throughout the proof, the Prokhorov distance is always taken with respect to the metric on curves viewed modulo time parameterization.
 
For $\rho  > 0$ and $z\in B_\rho(0)$, let $J_\rho^{z,n}$ for $n\in\BB N$ be the exit time of the embedded walk $\wh Y^{z,n}$ from $ B_\rho(0) $. Also let $\mcl B^z$ be a standard two-dimensional Brownian motion started from $z$ and let $\tau_\rho^z$ be its exit time from $B_\rho(0)$. We need to show that for each $\rho > 0$, the supremum over all $z\in B_\rho(0)$ of the Prokhorov distance between the conditional laws of $\wh Y^{z,n}|_{[0,J_\rho^{z,n}]}$ and $\mcl B^z|_{[0,\tau_\rho^z]}$ given $(h,\eta  )$ converges to 0 in probability as $n\rta\infty$.

We first consider a radius $\rho  \in (0,1)$ and deal with the log singularity at 0. 
For $\delta \in (0,\rho)$, choose $\alpha = \alpha(\delta) \in (0,\delta)$ such that the probability that a Brownian motion started from any point of $\BB D\setminus B_\delta(0)$ hits $B_\alpha(0)$ before leaving $\BB D$ is at most $\delta$. By Lemma~\ref{lem-rw-conv-0cone} and local absolute continuity (using Lemma~\ref{lem-cell-abs-cont}) it holds with probability tending to 1 as $n \rta\infty$ that for each $z\in B_\rho(0) \setminus B_\delta(0)$, the Prokhorov distance between the conditional laws of $\wh Y^{z,n}|_{[0,J_\rho^{z,n}]}$ and $\mcl B^z|_{[0,\tau_\rho^z]}$ given $(h,\eta  )$ is at most $\delta$. 
Since the law of $\mcl B^z|_{[0,\tau_\rho^z]}$ depends continuously on $z$, the Prokhorov-distance diameter of the set of laws of the curves $\mcl B^z|_{[0,\tau_\rho^z]}$ for $z\in B_\delta(0)$ tends to 0 as $\delta\rta 0$.  
 
By the last two sentences of the preceding paragraph and the strong Markov property of $Y^{z,n}$ and of $\mcl B^z$, it holds with probability tending to 1 as $n\rta\infty$ that for each $z\in B_\delta(0)$, the Prokhorov distance between the conditional laws of $Y^{z,n}|_{[J_\delta^{z,n} ,J_\rho^{z,n}]}$ and $\mcl B^z|_{[0,\tau_\rho^z]}$ given $(h,\eta )$ is $o_\delta(1)$, at a deterministic rate depending only on $\rho$. The distance between the curves $Y^{z,n}|_{[J_\delta^{z,n} ,J_\rho^{z,n}]}$ and $Y^{z,n}|_{[0,J_\rho^{z,n}]}$, viewed modulo time parameterization, is at most $2\delta$. Sending $\delta \rta 0$ now gives the theorem statement in the case $\rho < 1$. 

The case when $\rho\geq 1$ follows from the case when $\rho \in (0,1)$ and the scale invariance property of the $\gamma$-quantum cone~\cite[Proposition 4.13(i)]{wedges}, applied similarly as in Lemma~\ref{lem-rw-conv-0cone}. 
\end{proof}

We next transfer from the case of a $\gamma$-quantum cone to the case of a special type of quantum wedge which can be naturally found inside of a $\gamma$-quantum cone. This is what allows us to extend our result from surfaces without boundary to surfaces with boundary.

\begin{lem} \label{lem-rw-conv-wedge}
Theorem~\ref{thm-rw-conv} is true with $(\mcl D, h)$ replaced by a $\frac{3\gamma}{2}$-quantum wedge $(\BB H , h , 0, \infty)$ (if $\gamma \leq \sqrt 2$) or a single bead $(\BB H , h , 0, \infty)$ of such a quantum wedge with random area $\frk a$ and left/right boundary lengths $(\frk l_L , \frk l_R)$ sampled from some probability measure which is absolutely continuous with respect to Lebesgue measure on $(0,\infty)^3$ (if $\gamma \in (\sqrt 2 , 2)$); and $\eta$ replaced by an independent chordal space-filling SLE$_{\kappa}$ from $0$ to $\infty$ in $\BB H$, parameterized by $\gamma$-quantum mass with respect to $h$ (here we stop at the exit time from $\BB H$ and use the metric~\eqref{eqn-cmp-metric}, as in case~\ref{item-rw-conv-disk}).
\end{lem}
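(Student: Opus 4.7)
The plan is to couple the wedge (or bead) $(h,\eta)$ with a $\gamma$-quantum cone $(h^{\op{cone}},\eta^{\op{cone}})$ via the mating of trees theorem of~\cite{wedges,sphere-constructions}, and then reduce to the already-proven cone case (Theorem~\ref{thm-rw-conv} case~\ref{item-rw-conv-cone}) by exploiting conformal invariance of Brownian motion. First, I would apply~\cite[Theorem 1.13]{wedges} (for $\gamma\leq\sqrt 2$) or the bead decomposition of a $\frac{3\gamma}{2}$-quantum wedge (for $\gamma\in(\sqrt 2,2)$) to produce a coupling in which the wedge/bead is realized as the restriction of the cone to $\eta^{\op{cone}}([a,b])$ followed by a conformal map $\psi:\eta^{\op{cone}}([a,b])\to \BB H$, where $[a,b]=[0,\infty)$ for the wedge and $[a,b]$ is a bubble-filling interval for the bead. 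Under this coupling, the structure graph $\mcl G^\ep$ associated with $(h,\eta)$ is canonically identified with the subgraph of $\mcl G^{\ep,\op{cone}}$ on vertices $x\in\ep\BB Z$ with $[x-\ep,x]\cap[a,b]\not=\emptyset$, and the boundary vertices $\mcl V(\bdy\mcl G^\ep)$ correspond to cells with at least one $\mcl G^{\ep,\op{cone}}$-neighbor whose index lies outside $[a,b]$.

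Next, I would observe that, under this identification, random walk on $\mcl G^\ep$ starting at $x_z^\ep$ and stopped on first reaching $\mcl V(\bdy\mcl G^\ep)$ is coupled exactly to random walk on $\mcl G^{\ep,\op{cone}}$ started at the corresponding vertex $x_{\psi^{-1}(z)}^{\ep,\op{cone}}$ and stopped one step before the first time it tries to transition to an index outside $[a,b]$. The two walks agree on every transition between non-boundary positive cells, so the coupling is exact up to this final step, which introduces an error of at most one cell diameter; since the maximum cell diameter tends to $0$ in probability (by Proposition~\ref{prop-area-diam-deg-lln} together with~\cite[Lemma~3.3]{ghs-dist-exponent}), this discrepancy is negligible in the scaling limit modulo time parameterization.

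Then, I would fix a compact set $K\subset \BB H$ (respectively, compactly contained in the bead) and set $D:=\psi^{-1}(K)$. By the continuous extension of $\psi$ up to the boundary together with standard SLE boundary regularity, $D$ is compactly contained in the interior of $\eta^{\op{cone}}([a,b])$. The hypotheses of Section~\ref{sec-standard-setup} are satisfied (around any compact subset of $\BB C\setminus\{0,\infty\}$ the $\gamma$-quantum cone agrees with a whole-plane GFF plus a bounded deterministic function), so Proposition~\ref{prop-rw-bm-conv} applied to the cone gives the quenched convergence of the cone walk stopped on exit from $D$ to Brownian motion on the cone stopped on exit from $D$, uniformly over starting points in $D$. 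Conformal invariance of Brownian motion modulo time parameterization now pushes this statement forward by $\psi$: the wedge walk stopped on exit from $K$ converges quenched to Brownian motion in $\BB H$ (or in the bead domain) stopped on exit from $K$, uniformly over starting points in $K$. An exhaustion $K_n\nearrow \BB H$ combined with the strong Markov property and the fact that Brownian motion a.s.\ exits the wedge/bead in finite time yields the full convergence statement up to $\tau^z$.

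The main obstacle will be controlling the boundary comparison uniformly: the wedge walk stops on first entry to a boundary cell, which in the cone picture is a positive cell with a neighbor across $[a,b]^c$, while Brownian motion stops exactly on $\bdy\BB H$. The gap between these is at most the supremum of $|\psi(\zeta_1)-\psi(\zeta_2)|$ over pairs $\zeta_1,\zeta_2$ in the same cell (times two), and vanishes in probability by the cell diameter estimates of Section~\ref{sec-area-diam-deg} together with uniform continuity of $\psi$ on compact neighborhoods of $D$. A secondary technical point, in the bead case, is giving sense to the single-bead restriction with prescribed area and boundary lengths, which requires the conditioning/limiting constructions of~\cite{wedges,sphere-constructions}; once this is set up, the argument above applies verbatim, with $\BB H$ replaced by the bead's half-plane uniformization.
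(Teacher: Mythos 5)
Your overall strategy — realize the wedge (resp.\ bead) as the restriction of a $\gamma$-quantum cone to $\wh\eta([0,\infty))$ (resp.\ a single bubble-filling interval) via the mating-of-trees theorems, then transfer the cone case by conformal invariance of Brownian motion — is the same as the paper's. There are, however, two genuine gaps that you would need to close.

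First, you invoke Proposition~\ref{prop-rw-bm-conv} directly on the cone, justifying this with the remark that the cone agrees with a whole-plane GFF plus a bounded function ``around any compact subset of $\BB C\setminus\{0,\infty\}$.'' That parenthetical is the problem: the $\gamma$-quantum cone has a $-\gamma\log|\cdot|$ singularity at $0$, so the standard setup of Section~\ref{sec-standard-setup} (which requires $|h-\wt h|\le c_{\op{GFF}}$ on $B_\rho(\mcl D)$) fails for any $\mcl D$ containing $0$ — and $\psi^{-1}(K)$ contains $0$ whenever the compact set $K\subset\ol{\BB H}$ does, which the theorem allows. Moreover your exhaustion $K_n\nearrow\BB H$ produces cone-side domains $\psi^{-1}(K_n)$ that eventually leave any ball $B_{1-\rho}(0)$, so the requirement $\mcl D\subset B_{1-\rho}(0)$ fails as well. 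The paper sidesteps both issues by citing case~\ref{item-rw-conv-cone} of Theorem~\ref{thm-rw-conv}, whose proof extends Proposition~\ref{prop-rw-bm-conv} to the cone precisely by combining a Markov/absorption argument near the log-singular point with the exact scale invariance of the cone (via the stopping radii $R_b$). You should cite that instead.

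Second, your treatment of the bead case does not actually produce the object in the lemma statement. A bubble-filling interval of $\wh\eta$ gives \emph{some} bead, but you need a bead whose joint law of area and left/right boundary lengths is absolutely continuous with respect to Lebesgue measure on $(0,\infty)^3$. The paper's trick is to sample $U$ uniform on $[0,1]$ independent of everything, take the \emph{first} bead of the $\tfrac{3\gamma}{2}$-wedge with quantum mass at least $U$, and use the peanosphere description to check that the law of this bead's $(\frk a,\frk l_L,\frk l_R)$ is mutually absolutely continuous with respect to Lebesgue on $(0,\infty)^3$; the target bead is then reached by absolute continuity, with the induced index shift $(\ep\wh\indshift - T_1)\mod \ep\BB Z$ still conditionally uniform given the bead and the curve segment. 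You flag this as a ``secondary technical point'' but it is an essential part of the argument and not automatic. Finally, a minor imprecision: your coupled cone walk should be stopped at the first cell having a $\mcl G^{\ep,\op{cone}}$-neighbor outside $[a,b]$ (the first boundary cell), not ``one step before the first time it tries to transition to an index outside $[a,b]$'' — the latter is a strictly later stopping time, since the walk can visit a boundary cell and return to the interior before attempting the forbidden transition.
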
 
\begin{proof}
Let $(\BB C , \wh h , 0, \infty)$ be a $\gamma$-quantum cone and let $\wh\eta$ be an independent space-filling SLE$_{\kappa}$ from $\infty$ to $\infty$, parameterized by $\gamma$-quantum mass, as in case~\ref{item-rw-conv-cone} of Theorem~\ref{thm-rw-conv}. Let $\wh{\mcl G}^{1/n}$ for $n\in\BB N$ be its associated $1/n$-structure graph. 
  
By~\cite[Theorem~1.9]{wedges}, the law of the quantum surface $\wh{\mcl S} := (\wh\eta([0 , \infty)), \wh h|_{\wh \eta([0 , \infty))} , 0, \infty)$ is that of a $\frac{3\gamma}{2}$-quantum wedge. Furthermore, the curve $\wh \eta|_{[0 ,\infty)}$ is a chordal space-filling SLE$_{\kappa}$ going between the two marked points of $\wh{\mcl S}$ (if $\gamma \leq \sqrt 2$) or a concatenation of independent chordal space-filling SLE$_{\kappa}$'s in the beads of $\wh{\mcl S}$ (if $\gamma \in (\sqrt 2 , 2)$). 
  
By the $\gamma$-quantum cone case, the conditional law given $(\wh h,\wh\eta)$ of the linearly interpolated image under $\wh\eta$ of the simple random walk on $\wh{\mcl G}^{1/n}$ stopped upon exiting $\wh\eta([0,\infty))$ converges in probability to Brownian motion modulo time parameterization as $n\rta\infty$, uniformly over all choices of starting points in any fixed compact subset of $\wh\eta([0,\infty))$. 
Combining this with the conformal invariance of Brownian motion yields the statement of the lemma in the case $\gamma \leq \sqrt 2$. 
 
In the case when $\gamma \in (\sqrt 2 , 2)$, we sample $U$ uniformly from the uniform measure on $[0,1]$ and let $\wh{\mcl B} $ be the first bead of $\wh{\mcl S}$ with quantum mass at least $U$ (so that $\wh{\mcl B}$ is a quantum surface with the topology of the disk). The conditional law of $\wh{\mcl B}$ given $U$ is that of a sample from the intensity measure on beads of a $\frac{3\gamma}{2}$-quantum wedge conditioned to have area at least $U$. Using the peanosphere construction~\cite[Theorem~1.9]{wedges}, it is easy to see that the conditional law of the area and left/right boundary lengths of $\wh{\mcl B}$ given $U$ is mutually absolutely continuous with respect to Lebesgue measure on $(U,\infty) \times (0,\infty)^2$, so the marginal law of the area and left/right boundary lengths of $\wh{\mcl B}$ is mutually absolutely continuous with respect to Lebesgue measure on $(0,\infty)^3$. 
We then conclude exactly as in the case $\gamma \leq \sqrt 2$ except with $U$ used in place of $\wh\eta([0,\infty))$. 
\end{proof}

\begin{proof}[Proof of Theorem~\ref{thm-rw-conv}]
The case~\ref{item-rw-conv-cone} of a $\gamma$-quantum cone was treated above.
The case~\ref{item-rw-conv-sphere} of a unit-area quantum sphere decorated by a whole-plane space-filling SLE$_{\kappa}$ from $\infty$ to $\infty$ is an immediate consequence of the $\gamma$-quantum cone case and the local absolute continuity between the laws of the $\gamma$-quantum cone and the single marked quantum sphere near their respective marked points. 

Case~\ref{item-rw-conv-disk}, when $((\BB D , h , 1) , \eta)$ is a quantum disk decorated by an independent space-filling SLE$_{\kappa}$ loop, follows from Lemma~\ref{lem-rw-conv-wedge} and the local absolute continuity of (a) a quantum disk with respect to a quantum wedge (or a bead of such a wedge if $\gamma \in (\sqrt 2 , 2)$) away from the marked points and (b) a space-filling SLE$_{\kappa}$ loop with respect to a chordal space-filling SLE$_{\kappa}$ curve away from the start and end points. The former absolute continuity statement is immediate from the definitions in~\cite{wedges} and the latter absolute continuity statement can be obtained by using standard local absolute continuity results for the GFF (e.g.,~\cite[Proposition 3.4]{ig1}) applied to the imaginary geometry field $h^{\op{IG}}$ used to construct $\eta$ as in Section~\ref{sec-wpsf-prelim}. 
\end{proof}

\subsection{Convergence of the Tutte embedding}
\label{sec-embedding-proof}

\begin{proof}[Proof of Theorem~\ref{thm-tutte-conv0}]
For $n\in\BB N$, let $\mcl G^{1/n}$ be the mated-CRT map with the disk topology and let $\phi^{1/n} : \mcl V\mcl G^{1/n} \rta \ol{\BB D}$ be its Tutte embedding, as defined in Section~\ref{sec-overview}. Also let $\eta : [0,1]\rta \ol{\BB D}$ be the associated space-filling SLE$_{\kappa}$ curve parameterized by $\gamma$-LQG mass as in Proposition~\ref{prop-a-priori-embedding}, so that $x \mapsto \eta(x)$ is the a priori embedding of $\mcl G^{1/n}$. Theorem~\ref{thm-rw-conv} 
implies that the maximum over all vertices $x \in \mcl V\mcl G^{1/n}$ of the Prokhorov distance between the $ \mcl G^{1/n}$-harmonic measure on $\eta(\bdy\mcl G^{1/n})$ as viewed from $x$ and the Euclidean harmonic measure on $\bdy\BB D$ as viewed from $\eta(x)$ tends to 0 in probability as $n\rta\infty$. By this and the definition of $\phi^{1/n}$, 
\eqb \label{eqn-tutte-conv}
\max_{x\in\mcl V\mcl G^{1/n}} |\phi^{1/n}(x) - \eta(x)| \rta 0 \quad \text{in probability}.  
\eqe
The uniform convergence statement for curves is an immediate consequence of~\eqref{eqn-tutte-conv}, the claimed measure convergence statement follows from~\eqref{eqn-tutte-conv} and the fact that $\eta$ is parameterized by $\mu_h$-mass, and the random walk convergence follows from~\eqref{eqn-tutte-conv} and Theorem~\ref{thm-rw-conv}. 
\end{proof}

\begin{remark}[Tutte embeddings in the whole-plane and sphere cases] \label{remark-other-embedding}
One can also define Tutte embeddings of the mated-CRT map with the topology of the plane or sphere for which an analog of Theorem~\ref{thm-tutte-conv0} holds, but the definition is more complicated since these maps do not have a boundary. We explain this in the plane case (the sphere case is similar). Let $\{\mcl G^{1/n} \}_{n\in\BB N}$ be the mated-CRT maps with the whole-plane topology, as in Section~\ref{sec-mated-crt-map}, and let $\eta$ be the corresponding whole-plane space-filling SLE$_{\kappa}$ parameterized by $\gamma$-LQG mass, as in Section~\ref{sec-sg-def}. Assume that we have chosen the embedding of our curve-decorated quantum surface (i.e., rotated and scaled) in such a way that $\eta(1)=1$. 

For $T>1$, let $\mcl G^{1/n}_T$ be the sub-graph of $\mcl G^{1/n}$ induced by $[-T,T]\cap (\frac{1}{n} \BB Z)$. Then $\mcl G^{1/n}_T$ is a planar map with boundary (the boundary corresponds to vertices which are joined by edges to vertices not in $[-T,T]\cap (\frac{1}{n} \BB Z)$), so we can define its Tutte embedding into $\BB D$ exactly as in the disk case (Section~\ref{sec-overview}). Let $\phi_T^n$ be obtained from this Tutte embedding by multiplying by a complex number so that $1 \in \mcl V\mcl G^{1/n}$ is mapped to $1\in\BB C$, and define $\phi_{T }^{1/n}$ arbitrarily outside of $[-T  ,T ]\cap (\frac{1}{n} \BB Z)$. One can check using Theorem~\ref{thm-rw-conv} that if $\{T^n\}_{n\in\BB T}$ is a deterministic sequence which tends to 0 sufficiently slowly, then the uniform distance between $\phi_{T^n }^n$ and $\eta$ on any fixed interval of the form $[a,b]\cap (\frac{1}{n} \BB Z)$ tends to 0 in probability as $n\rta\infty$, and hence the conclusions of Theorem~\ref{thm-tutte-conv0} hold for the image of $\mcl G^{1/n}$ under $\phi_{T^n}^{1/n}$.\footnote{To check this, let $\wt\psi_T$ be the conformal map from the connected component of the interior of $\eta([-T,T])$ containing 0 to $\BB D$ which sends 0 to 0 and let $\psi_T$ be obtained by multiplying $\wt\psi_T$ by a complex number chosen so that $\psi_T(\eta(1) ) = 1$. 
Theorem~\ref{thm-rw-conv} shows that for each fixed $T>1$, the uniform distance between $\phi_T^n$ and $x\mapsto (\psi_T\circ \eta)(x)$ tends to 0 in probability as $n\rta\infty$. If $T>>T'$, then $\psi_T$ is nearly constant on $\eta([-T',T'])$, so the desired statement is indeed true if we make $T^n$ tend to $\infty$ sufficiently slowly.}
 
We also note that Proposition~\ref{prop-0cone} together with~\cite[Theorem~1.16]{gms-random-walk} shows that a variant of Theorem~\ref{thm-tutte-conv0} holds for the adjacency graph of cells arising from space-filling SLE$_{\kappa}$ on a 0-quantum cone, but the discrete harmonic function $\phi_\infty$ of~\cite[Theorem~1.16]{gms-random-walk} is not an explicit functional of the graph structure. 
\end{remark}

\section{Moment bound for diameter$^2$/area and degree of cells}
\label{sec-moment}
 
Throughout this subsection we assume we are in the setting of Section~\ref{sec-0cone}, so that $(\BB C ,h , 0, \infty)$ is a 0-quantum cone with the circle average embedding and $\eta$ is a whole-plane space-filling SLE$_{\kappa }$ sampled independently from $h$ and then parameterized by $\gamma$-LQG mass with respect to $h$ in such a way that $\eta(0)= 0$. We define the cell configuration $\mcl H$ as in~\eqref{eqn-0cone-cells}. The goal of this section is to check hypothesis~\ref{item-hyp-moment} of Theorem~\ref{thm-general-clt-uniform} for the cell configuration $\mcl H$ (which is needed in the proof of Proposition~\ref{prop-0cone}). In fact, we will prove the following stronger statement.

\begin{thm} \label{thm-moment}
Let $\mcl H$ be as above and let $H_0$ be the cell of $\mcl H$ containing the origin. Then
\eqb \label{eqn-area-diam-moment}
\BB E\left[ \left( \frac{\op{diam}\left(H_0 \right)^2}{\op{area}\left( H_0 \right) } \right)^p \right] < \infty ,\quad \forall p \geq 1 \quad \text{and} \quad
\eqe  
\eqb \label{eqn-deg-moment}
\BB E\left[ \op{deg}\left( H_0 \right)^p \right]  < \infty ,\quad \forall p \in [1,4/\gamma^2) .
\eqe 
\end{thm}

\begin{remark}  
It is easily seen from Brownian motion estimates that the degree of the root vertex in the whole-plane mated-CRT map has an exponential tail~\cite[Lemma 2.2]{gms-harmonic}. However, this does not help us prove~\eqref{eqn-deg-moment} since we are working with a 0-quantum cone so the origin is a Lebesgue typical point, not a quantum typical point.
\end{remark}

The proof of Theorem~\ref{thm-moment} builds on estimates for SLE and LQG from~\cite{shef-kpz,rhodes-vargas-review,ghm-kpz}. 
Roughly speaking,~\eqref{eqn-area-diam-moment} comes from that fact that $\eta$ is exponentially unlikely to travel Euclidean distance $r$ without first swallowing a Euclidean ball of radius a little bit less than $r$~\cite[Lemma 3.6]{ghm-kpz}; and the fact that such a ball is very unlikely to have smaller quantum mass than we would expect~\cite[Lemma 4.5]{shef-kpz}.  

Our estimate for $\op{deg}(H_0)$ is split into two parts: we will separately bound the ``inner degree", defined as the maximal number of disjoint $\ep$-length segments of $\eta$ which intersect $\eta([-1,1]) \supset H_0$ and which are contained in the ball $\ol B_0$ of radius $4 \op{diam} \eta([-1,1])$ centered at 0; and the ``outer degree", defined as the number of times that $\eta$ crosses between $\eta([-1,1])$ and $\bdy \ol B_0$ (see Definition~\ref{def-deg}). The inner degree is bounded by $\mu_h(\ol B_0)$, which we will show has finite moments up to order $4/\gamma^2$ using moment bounds for the $\gamma$-LQG measure~\cite[Theorem 2.11]{rhodes-vargas-review}. The outer degree can be bounded using the fact that with high probability, each crossing of $\eta$ of an annulus has to contain a Euclidean ball with radius slightly smaller than the distance across the annulus~\cite[Proposition 3.4]{ghm-kpz}. 
 
The aforementioned bounds for space-filling SLE and the LQG measure require us to work in a Euclidean ball of a fixed size, but the diameter of $\eta([-1,1])$, hence the radius of $\ol B_0$, is random. In order to get around this difficulty, we will use the radii $R_b$ defined as in~\eqref{eqn-mass-hit-time} and bound from above the probability that $\eta([-1,1])$ is contained in $B_{R_{1/C}}(0)$ and the probability that it is not contained $B_{R_{C}}(0)$ when $C$ is large (Lemma~\ref{lem-radius-segment}). We will then re-scale by $R_b^{-1}$ for a certain value of $b>0$ and work with a re-scaled version of $(h,\eta)$ which has the same law as $(h,\eta)$ due to~\eqref{eqn-cone-scale}. 
 
Our proof in fact yields a stronger version of Theorem~\ref{thm-moment} which gives analogous moment bounds for quantities which are deterministically larger than $\op{diam}(H_0)^2/\op{area}(H_0)$ and $\op{deg}(H_0)$, respectively, but are locally determined by $h$ and $\eta$ (we recall from Section~\ref{sec-cell-abs-cont} that cells of $\mcl H$ are not locally determined by $h$ and $\eta$). We define these quantities at the beginning of Sections~\ref{sec-area-diam} and~\ref{sec-deg-bound}. The moment bound for quantities which are locally determined by $h$ and $\eta$ will be important in~\cite{gms-harmonic}.

\subsection{Estimating the diameter of $\eta([-1,1])$ via circle averages}
\label{sec-radius-segment}

Throughout the rest of this section, we will use the following notation. Let $h$ the circle-average embedding of a 0-quantum cone, as above, and let $\{h_r(z)\}_{r > 0, \, z\in \BB C}$ be its circle-average process. 
For $b > 0$, let $R_b$ be as in~\eqref{eqn-mass-hit-time} and let   
\eqb \label{eqn-shift-fields}
 h^b  :=     h(R_b \cdot  )    + Q\log R_b  \quad \op{and} \quad \eta^b(t) := R_b^{-1} \eta(t) ,\: \forall t\in \BB R .  
\eqe 
By~\eqref{eqn-cone-scale}, $h^b \eqD h + \frac{1}{\gamma} \log b $, so in particular $h^b|_{\BB C}$ agrees in law with the corresponding restriction of a whole-plane GFF normalized so that its circle average over $\bdy\BB D$ is $\frac{1}{\gamma} \log b$. 
By the $\gamma$-LQG coordinate change formula~\eqref{eqn-lqg-coord}, $\mu_h(X) = \mu_{h^b}(R_b^{-1} X)$ for each Borel set $X\subset \BB C$, hence $\eta^b$ is parameterized by $\gamma$-quantum mass with respect to $h^b$.
We will also make frequent use of the times
\eqb \label{eqn-sle-hit-time}
\tau(r) := \inf\left\{ t  > 0 : |\eta(t)| \geq r \right\} \quad\op{and} \quad 
\tau^-(r) := \sup\left\{t < 0 : |\eta(t)| \geq r \right\} ,\quad \forall r > 0. 
\eqe 
The following lemma allows us to estimate the size of $\eta([-1,1])$ in terms of the radii $R_b$. 
   
\begin{lem} \label{lem-radius-segment}  
For $C >1$,  
\eqb \label{eqn-radius-segment-upper}
\BB P\left[ \eta([ 0 ,1]) \not\subset B_{R_{1/C} }(0) \right] \geq 1 - C^{-\tfrac{4}{\gamma^2} + o_C(1)}
\eqe
and 
\eqb\label{eqn-radius-segment-lower}
\BB P\left[  \eta([  - 1 , 1])  \subset B_{  R_C / 8  }(0) \right] = 1 - o_C^\infty(C) 
\eqe
as $C\rta\infty$, at a rate which depends only on $\gamma$ (see Section~\ref{sec-basic-notation} for an explanation of our usage of $o(\cdot)$ notation).  
\end{lem}

\begin{proof}
Let $h^b$ and $\eta^b$ for $b > 0$ be as in~\eqref{eqn-shift-fields}. 
We first prove~\eqref{eqn-radius-segment-upper}.
Since $\eta^{1/C}$ is parameterized by $h^{1/C}$-quantum mass, for each $C>1$, 
\allb
\BB P\left[ \eta \left( \left[ 0 , 1 \right] \right) \subset    B_{R_{1/C} }(0) \right]
 = \BB P\left[ \eta^{1/C} \left( \left[0,  1\right] \right) \subset \BB D   \right] 
 \leq \BB P\left[   \mu_{  h^{1/C} } (\BB D) >     1  \right] . \label{eqn-radius-segment-coord}
\alle
Since $h^{1/C} \eqD h + \frac{1}{\gamma} \log(1/C)$, we can apply a standard moment bound from Gaussian multiplicative chaos theory~\cite[Theorem~2.11]{rhodes-vargas-review} to get that $C \mu_{ h^{1/C}}(\BB D)$ has finite $C$-independent moments of all orders $\beta \in (0, 4/\gamma^2)$.  
By the Chebyshev inequality, we obtain that for any such $\beta$, $\BB P\left[  \mu_{  h^{1/C} } (\BB D) >   1  \right]  \preceq  C^{-\beta}$.
Combining this with~\eqref{eqn-radius-segment-coord} and sending $\beta\rta 4/\gamma^2$ yields~\eqref{eqn-radius-segment-upper}. 

To prove~\eqref{eqn-radius-segment-lower}, let $\zeta \in (0,1/100)$ be a small parameter which we will eventually send to 0 and fix $C>1$. 
We define the time $\tau(R_C/8)$ as in~\eqref{eqn-sle-hit-time}, so that $\tau(R_C/8)$ is the exit time of $\eta^{C } $ from $B_{1/8}(0)$. Since $\eta^{C }$ is parameterized by $\mu_{h^{C }}$-mass, $\tau(   R_C / 8 )  = \mu_{h^{C }}( \eta^{C }([0, \tau(  R_C / 8)]) )$. 
We will prove a lower bound for this quantum mass.  

To this end, let $M$ be the radius of the largest Euclidean ball contained in $ \eta^{C }([0,\tau(R_C/8)])$. Let $w$ be the center of such a ball with maximal radius, chosen in a manner which depends only on  $ \eta^{C} ([0, \tau(R_C/8)])$. By~\cite[Lemma~3.6]{ghm-kpz},
\eqb \label{eqn-max-rad-tail}
\BB P[M \geq C^{-\zeta} ] = 1 - o_C^\infty(C)
\eqe 
as $C\rta\infty$, at a rate depending only on $\gamma$. Since $B_M(w)$ depends only on the curve $ \eta^{C }$, viewed modulo monotone re-parameterization, this ball is independent from the field $ h^{C }$. Since $h^C \eqD h + \frac{1}{\gamma} \log C$, we can apply a standard estimate for the $\gamma$-LQG measure (see, e.g.,~\cite[Lemma~3.9]{ghm-kpz}) to get
\eqb \label{eqn-measure-tail}
\BB P\left[ \mu_{ h^{C } }(B_M(w)) \geq C^{-\zeta} M^{2+\frac{\gamma^2}{2}} e^{\gamma h^{C }_M(w)} \,|\, B_M(w) \right] = 1 -  o_C^\infty(C)
\eqe 
at a deterministic rate depending only on $\gamma$ (here $h^{C }_M(w)$ is the circle average of $h^{C }$ over $\bdy B_M(w)$). Since $h^C|_{\BB D}$ agrees in law with the corresponding restriction of a whole-plane GFF normalized so that its circle average over $\bdy\BB D$ is $\frac{1}{\gamma} \log C$, the conditional law of the circle average $ h^{C }_M(w)$ given $B_M(w)$ is Gaussian with mean $\frac{1}{\gamma} \log(C ) $ and variance at most $\log M^{-1} + O(1)$, where $O(1)$ denotes a quantity bounded above in absolute value by a universal constant. By the Gaussian tail bound,
\eqb \label{eqn-circle-avg-tail}
\BB P\left[     h^{C }_M(w) \geq   \frac{1}{\gamma} \log(C^{100\zeta}  )   \,|\, B_M(w)\right] \BB 1_{(M \geq C^{-\zeta})}
\geq \left( 1 -  C^{-\frac{(1-100\zeta)^2}{2\gamma^2 \zeta}   }  \right) \BB 1_{(M \geq C^{-\zeta})} 
\eqe 
at a rate depending only on $\gamma$. 
By using~\eqref{eqn-max-rad-tail} and~\eqref{eqn-circle-avg-tail} to bound the quantity $ C^{-\zeta} M^{2+\frac{\gamma^2}{2}} e^{\gamma h^{C }_M(w)}$ appearing in~\eqref{eqn-measure-tail} from below and then sending $\zeta \rta 0$, we get
\eqb \label{eqn-radius-exit-mass}
\BB P\left[  \mu_{ h^{C } }(B_M(w)) \geq  1   \right] = 1 -  o_C^\infty(C)  .
\eqe  
Therefore,
\alb
\BB P\left[   \tau (R_C/8 )     \geq 1  \right]  
&= \BB P\left[    \mu_{ h^{C } }(\eta^C([ 0 ,  \tau (R_C/8 )  ]))    \geq  1  \right] \\
&\geq \BB P\left[   \mu_{ h^{C } }(B_M(w))  \geq 1   \right] \quad \text{(since $B_M(w) \subset \eta^{C }([0,\tau(R_C/8)])$)} \notag \\
&= 1 -  o_C^\infty(C)  \quad \text{(by~\eqref{eqn-radius-exit-mass})} .
\ale
By the definition~\eqref{eqn-sle-hit-time} of $\tau(R_C/8)$, if $\tau(R_C/8) \geq 1$ then $\eta([0,1]) \subset B_{R_C/8}(0)$. By the forward/reverse symmetry of $\eta$, this implies~\eqref{eqn-radius-segment-lower}.
\end{proof}

\subsection{Moment bound for diameter$^2$/area}
\label{sec-area-diam}

We will now prove~\eqref{eqn-area-diam-moment} of Theorem~\ref{thm-moment}. In fact, we will prove a moment bound for the larger quantity
\eqb \label{eqn-area-diam-localize}
\op{DA}(H_0) := \sup\left\{ \frac{\op{diam}\left(\eta([s-1 , s   ]) \right)^2}{\op{area}\left(\eta([s-1 , s   ]) \right) } \,:\,  s \in [0,1] \right\} .
\eqe 
In words, $\op{DA}(H_0)$ is equal to the maximum ratio of the squared diameter to the area over all of the segments of $\eta$ with unit quantum mass which contain $0$.
By definition, the cell $H_0$ is one of these segments of $\eta$ (namely, the one with $s =\indshift$), so 
\eqb \label{eqn-area-diam-localize-bound}
 \frac{\op{diam}(H_0)^2}{\op{area}(H_0)} \leq   \op{DA}(H_0) .
\eqe  
 
\begin{prop} \label{prop-area-diam}
Define $\op{DA}(H_0)$ as in~\eqref{eqn-area-diam-localize}. Then
\eqb \label{eqn-area-diam}
 \BB E\left[     \op{DA}(H_0)^p \right]  < \infty ,\quad \forall p \geq 1 .
\eqe  
\end{prop}  
\begin{proof}
Let $\zeta\in (0,1)$ be a small parameter which we will eventually send to $0$. We will apply~\cite[Lemma 3.6]{ghm-kpz} (which says that $\eta$ is unlikely to cross an annulus without absorbing a Euclidean ball of radius comparable to the distance across the annulus) at several scales and use Lemma~\ref{lem-radius-segment} to argue that with high probability, if $s \in [0,1]$, then $\eta([s-1 , s])$ has to cross one of these scales. We will then integrate to convert a probability estimate to a moment estimate. 
 
The random variables $R_{C^\zeta}$ and $R_{1/C^{1/\zeta}}$ for $C > 1$ depend only on $ h$, so are independent from the curve $ \eta$, viewed modulo monotone re-parameterization. By~\cite[Lemma~3.6]{ghm-kpz} and a union bound over $k$, we have (in the notation~\eqref{eqn-sle-hit-time})
\alb
&\BB P\left[ \op{area}\left( \eta([ 0 ,\tau(  e^k ) ]  ) \right) \geq C^{-1} e^{2k} ,\: \forall k \in \left[  \log R_{1/C^{1/\zeta}} -1  ,   \log R_{C^\zeta}     +1 \right]_{\BB Z} \,|\, h \right]  \\
&\qquad \geq 1 - \left( \log \left( R_{C^\zeta} / R_{1/C^{1/\zeta}} \right)+2\right) e^{-c_0 C^{1/2}}            .
\ale
for $c_0 > 0$ a constant depending only on $\gamma$. By combining this with Lemma~\ref{lem-cone-hit-tail} (applied with $\alpha = 0$ and, e.g., $e^{C^{1/4}}$ in place of $C$), we get that except on an event of probability $o_C^\infty(C)$ (at a rate depending only on $\zeta$ and $\gamma$), 
\eqb \label{eqn-exp-scale-area}
 \op{area}\left( \eta([ 0 ,\tau(  e^k ) ]) \right) \geq C^{-1} e^{2k} ,\quad \forall k \in \left[  \log R_{1/C^{1/\zeta}} -1   ,   \log R_{C^\zeta}   +1  \right]_{\BB Z}.
\eqe 
By the invariance of the law of $\eta$ under time reversal, it also holds except on an event of probability $o_C^\infty(C)$ that
\eqb \label{eqn-exp-scale-area'}
 \op{area}\left( \eta([\tau^-(e^k), 0  ]) \right) \geq C^{-1} e^{2k} ,\quad \forall k \in \left[  \log R_{1/C^{1/\zeta}} -1   ,   \log R_{C^\zeta}  +1  \right]_{\BB Z} 
\eqe
where here $\tau^-(\cdot)$ is as in~\eqref{eqn-sle-hit-time}.
By Lemma~\ref{lem-radius-segment} and the invariance of the law of $\eta$ under time reversal, it holds with probability $1-C^{-\frac{4 }{\gamma^2 \zeta} + o_C(1)}$ (at a rate depending only on $\zeta$ and $\gamma$) that 
\eqb \label{eqn-use-free-gff-mass}
\eta([0, 1/2]) ,\eta([-1/2 , 0]) \not\subset B_{R_{1/C^{1/\zeta}}}(0)  
\quad \op{and} \quad   
\eta([ -1 ,1]) \subset B_{R_{C^\zeta }}(0)   .
\eqe  

Suppose now that~\eqref{eqn-exp-scale-area},~\eqref{eqn-exp-scale-area'}, and~\eqref{eqn-use-free-gff-mass} all hold (which happens with probability at least $1-C^{-\frac{4 }{\gamma^2 \zeta} + o_C(1)}$). We seek to bound the quantity $\op{DA}(H_0)$ from~\eqref{eqn-area-diam-localize}. 
To this end, suppose $s\in [0,1]$. Then
\alb
  \eta([s - 1 , s]) \subset \eta([-1 , 1] )  
  \quad \op{and}\quad
 \eta([ s - 1 , s]) \not\subset  \eta([-1/2, 1/2] ) .
\ale
By~\eqref{eqn-use-free-gff-mass}, if we let $k$ be the smallest integer for which $\eta([ s -1 ,  s]) \subset B_{e^{k+1}}(0)$, then 
\eqbn
k\in \left[  \log R_{1/C^{1/\zeta}} -1   ,    \log R_{C^\zeta}    +1  \right]_{\BB Z} .
\eqen
Furthermore, by the definition of $k$ either $ \eta([  0 , \tau(  e^k ) ]) $ or $ \eta([ \tau^-(e^k) , 0  ]) $ is contained in $ \eta([ s - 1 , s])$. By~\eqref{eqn-exp-scale-area} and~\eqref{eqn-exp-scale-area'},
\eqb \label{eqn-area-diam-implication}
\op{area}\left(  \eta([ s - 1 ,  s]) \right) \geq e^{-2} C^{-1} \op{diam}\left(  \eta([ s -1 ,  s])  \right)^2 .
\eqe 
Therefore, for $C>1$,
\eqbn
\BB P\left[ \op{DA}(H_0)  >   e^2 C \right]  \leq C^{-\tfrac{4 }{\gamma^2 \zeta} + o_C(1)}  .
\eqen
Sending $\zeta \rta 0$, then multiplying this estimate by $p C^{p-1}$ for $p \geq 1$ and integrating from 1 to $\infty$ with respect to $C$ shows that $\BB E[ \op{DA}(H_0)^p ] < \infty$.
\end{proof}

\subsection{Moment bound for degree}
\label{sec-deg-bound}

As noted just after Theorem~\ref{thm-moment}, we will prove our moment bound for $\op{deg}(H_0)$ in two parts.
Define the closed ball
\eqb \label{eqn-localize-set}
\ol B_0 := B_{4 \op{diam}\left( \eta([-1,1]) \right) }(0) .
\eqe  
We will define two quantities which count the number of unit-$\mu_h$ mass segments of $\eta$ which intersect $\eta([-1,1])$ and are contained in and not contained in $\ol B_0$, respectively, whose sum provides an upper bound for $\op{deg}(H_0)$. 

\begin{defn} \label{def-deg}
\begin{itemize}
\item \textbf{Inner degree.} Let $\op{deg}_{\op{in}}(H_0)$ be the largest number $N\in\BB N$ with the following property: there is a collection of $N$ intervals $\{[a_j , b_j]\}_{j\in [1,N]_{\BB Z}}$ which intersect only at their endpoints, each of which has length $b_j  -a_j =1$, satisfies $\eta([a_j ,b_j]) \subset \ol B_0$, and is such that $\eta([a_j,b_j])\cap \eta([-1,1]) \not=\emptyset$.
We note that
\eqb \label{eqn-deg-in-upper}
\op{deg}_{\op{in}}(H_0) \leq  \mu_h(\ol B_0) .
\eqe 
\item \textbf{Outer degree.} Let $\op{deg}_{\op{out}}(H_0)$ be the largest number $N' \in\BB N$ with the following property: there is a collection of $N'$ intervals $\{[a_j , b_j]\}_{j\in [1,N']_{\BB Z}}$ which intersect only at their endpoints such that for each $j\in [1,N']_{\BB Z}$, $\eta((a_j,b_j)) $ is contained in the interior of $\ol B_0$, one of the endpoints $\eta(a_j)$ or $\eta(b_j)$ is contained in $\bdy \ol B_0$, and the other endpoint is contained in $\eta([-1,1])$.  
\end{itemize}
\end{defn} 

Since $H_0 \subset \eta([-1,1])$, the set of intervals $ [x-1 , x] $ for $x\in \BB Z +  \indshift$ whose corresponding cell $\eta([x-1,x]) \in \mcl H$ is adjacent to $H_0$ and contained in $\ol B_0$ is a collection as in the definition of $\op{deg}_{\op{in}}(H_0)$, so the number of such $x$ is at most $\op{deg}_{\op{in}}(H_0)$. Similarly, the number of $x\in \BB Z +  \indshift$ such that $\eta([x-1,x])$ is adjacent to $H_0$ in $\mcl H$ and $\eta([x-1,x])\not\subset \ol B_0$ is at most $\op{deg}_{\op{out}}(H_0)$, since for any such $x$ the cell $\eta([x-1,x])$ contains a different interval $[a_j,b_j]$ as in the definition of $\op{deg}_{\op{out}}(H_0)$. Therefore,
\eqb \label{eqn-deg-localize-bound}
\op{deg}\left(H_0\right) \leq   \op{deg}_{\op{in}}(H_0) +   \op{deg}_{\op{out}}(H_0)    .
\eqe  
 
\begin{prop} \label{prop-fixed-pt-deg} 
Define $\op{deg}_{\op{in}}(H_0)$ and $\op{deg}_{\op{out}}(H_0)$ as in Definition~\ref{def-deg}. Then
\eqb \label{eqn-fixed-pt-deg-in}
\BB E\left[   \op{deg}_{\op{in}}(H_0)^p \right] < \infty ,\quad \forall p \in [1,4/\gamma)^2 \quad \text{and} \quad
\eqe   
\eqb \label{eqn-fixed-pt-deg-out}
\BB E\left[    \op{deg}_{\op{out}}(H_0)^p \right] < \infty ,\quad \forall p \geq 1 .
\eqe    
\end{prop}
 
\begin{proof}[Proof of~\eqref{eqn-fixed-pt-deg-in} of Proposition~\ref{prop-fixed-pt-deg}]
In light of~\eqref{eqn-deg-in-upper}, we need an upper bound for the $\mu_h$-mass of the ball $ \ol B_0 $ defined in~\eqref{eqn-localize-set}.
Fix $\zeta \in (0,1)$ (which we will eventually send to 0) and let $C  > 1$. By~\eqref{eqn-radius-segment-lower} of Lemma~\ref{lem-radius-segment} and the forward/reverse symmetry of the law of $\eta$, it holds with probability at least $1-o_C^\infty(C)$ (at a rate depending only on $  \zeta$ and $\gamma$) that $\eta([  -1  ,   1]) \subset B_{  R_{C^\zeta} /8}(0)$. If this is the case, then the ball $\ol B_0 $ of~\eqref{eqn-localize-set} is contained in $ B_{R_{C^\zeta} }(0)$, so by~\eqref{eqn-deg-in-upper} it holds with probability at least $1-o_C^\infty(C)$ that
\eqb \label{eqn-in-deg-contain}
\op{deg}_{\op{in}}(H_0) \leq  \mu_h( B_{R_{C^\zeta } }(0) )  =  \mu_{h^{C^\zeta}}(\BB D) .
\eqe  
By~\cite[Theorem~2.11]{rhodes-vargas-review}, $C^{-\zeta} \mu_{ h^{C^\zeta } }( \BB D )$ has finite, $C $-independent moments of all orders $\beta \in (0, 4/\gamma^2)$  
so by the Chebyshev inequality, for any such $\beta$ and any $C > 1$,
\eqb  \label{eqn-in-deg-chebyshev}
\BB P\left[ \op{deg}_{\op{in}}(H_0) > C \right]  
\leq \BB P\left[  \mu_{h^{C^\zeta}}( \BB D )  >C   \right]  + o_C^\infty(C)
\preceq   C^{\beta(1-\zeta)}  
\eqe  
with the implicit constant depending only on $\beta$ and $\gamma$. Sending $\beta \rta 4/\gamma^2$ and $\zeta\rta 0$, then multiplying the resulting estimate by $C^p$ for $p\in [1,4/\gamma^2)$ and integrating from 1 to $\infty$ with respect to $C$ shows that~\eqref{eqn-fixed-pt-deg-in} holds.
\end{proof}

Next we turn our attention to the outer localized degree $\op{deg}_{\op{out}}(H_0)$. The idea of the proof is to apply~\cite[Proposition 3.4]{ghm-kpz}---which says that every segment of $\eta$ contained in $\BB D$ much contain a Euclidean ball of radius slightly smaller than the diameter of the segment---at several scales; and use Lemma~\ref{lem-radius-segment} to argue that with high probability the diameter of $\eta([-1,1])$ is comparable to the size of one of these scales. For the proof, we will work on the regularity event defined in the following lemma (the parameter $\zeta$ will eventually be sent to 0).

\begin{lem} \label{lem-deg-segment-event}
Let $\zeta \in (0,1)$ and for $C> 1$, let $G_C = G_C(\zeta)$ be the event that the following is true.
\begin{enumerate}
\item $ \eta^{C^\zeta }\left(\left[- 1 ,   1     \right]\right)     \subset  B_{1/8}(0)$. \label{item-deg-segment-upper}
\item $\eta^{C^\zeta}\left(\left[-  1 ,    1        \right]\right) \not\subset B_{C^{-1/\zeta}}(0)$. \label{item-deg-segment-lower}
\item For each $k \in [0 , \log_8 C^{1/\zeta}+1]_{\BB Z}$ and each $a,b\in \BB R$ with $\eta^{C^\zeta }([a,b]) \subset B_{8^{-k}}(0)$ and $\op{diam}(\eta^{C^\zeta }([a,b])) \geq C^{- (1-\zeta)/2 } 8^{-k}$, the set $\eta^{C^\zeta }([a,b])$ contains a Euclidean ball of radius at least $C^{-1/2} 8^{-k}$.  \label{item-deg-segment-ball}
\end{enumerate}
Then
\eqb \label{eqn-deg-segment-event}
\BB P\left[G_C \right] \geq 1 - C^{\frac{4}{\gamma^2}  \zeta-1/\zeta + o_C(1)} ,
\eqe 
at a rate depending only on $\zeta$ and $\gamma$. 
\end{lem}
\begin{proof}
It follows from~\eqref{eqn-radius-segment-lower} of Lemma~\ref{lem-radius-segment} that condition~\ref{item-deg-segment-upper} in the definition of $G_C$ holds except on an event of probability $o_C^\infty(C)$.

To show that condition~\ref{item-deg-segment-lower} also holds with high probability, we first use that $h^{C^\zeta} \eqD h  +\frac{1}{\gamma} \log C^\zeta$ and basic moment estimates for the $\gamma$-LQG measure (see, e.g.,~\cite[Lemma~5.2]{ghm-kpz}) to get that for $\beta \in (0,4/\gamma^2)$ and $r\in (0,1)$, 
\eqb \label{eqn-deg-ball-moment}
\BB E\left[ \left( C^{-\zeta} \mu_{h^{C^\zeta }}\left( B_r(0) \right) \right)^\beta \right] \preceq r^{  f(\beta)/2}
\eqe 
where $f(\beta) = (2+\tfrac{\gamma^2}{2}) \beta - \tfrac{\gamma^2}{2} \beta^2 $ and the implicit constant depends only on $\gamma$ and $\beta$.  
Using~\eqref{eqn-deg-ball-moment}, the Chebyshev inequality, and the fact that $\mu_{h^{C^\zeta }}(\eta^{C^\zeta }\left(\left[-   1 , 1      \right]\right) ) =  2$, we get that for each $\beta \in (0,4/\gamma^2)$, 
\allb \label{eqn-deg-segment-small}
\BB P\left[\eta^{C^\zeta }\left(\left[-  1 ,  1      \right]\right) \subset B_{C^{-1/\zeta}}(0) \right]
 \leq \BB P\left[ \mu_{h^{C^\zeta }}\left( B_{C^{-1/\zeta}}(0) \right) \geq 2 \right]   
 \preceq C^{\beta \zeta  - f(\beta) / (2\zeta)  }   ,
\alle  
with the implicit constant depending only on $  \zeta$ and $\gamma$. Sending $\beta \rta 4/\gamma^2$, we see that the right side can be bounded by $C^{\frac{4}{\gamma^2} \zeta - 1/\zeta + o_C(1)}$. 

By~\cite[Proposition~3.4 and Remark~3.9]{ghm-kpz}, since $\eta^{C^\zeta} \eqD \eta$ viewed as curves modulo monotone re-parameterization, and by scale invariance, for each $k \in [0 , \log_8 C^{1/\zeta}+1]_{\BB Z}$ the event condition~\ref{item-deg-segment-ball} in the definition of $ G_C$ at scale $8^{-k}$ has probability at least $1-o_C^\infty(C)$. By taking a union bound over a logarithmic number of scales and combining the resulting estimate with our above bounds for the probabilities of the first two conditions, we obtain~\eqref{eqn-deg-segment-event}.
\end{proof}

\begin{proof}[Proof of~\eqref{eqn-fixed-pt-deg-out} of Proposition~\ref{prop-fixed-pt-deg}]
Fix $\zeta \in (0,1)$ which we will eventually send to 0. 
Suppose $C > 1$ and the event $G_C$ of Lemma~\ref{lem-deg-segment-event} occurs. 

Let $N' = \op{deg}_{\op{out}}(H_0)$ and let $\{[a_j ,b_j]\}_{j\in [1,N']_{\BB Z}}$ be a collection of intervals as in the definition of $\op{deg}_{\op{out}}(H_0)$, so that the interiors $(a_j , b_j)$ are disjoint, each $\eta([a_j,b_j]) $ is contained in the interior of $\ol B_0 $, and for each $j\in [1,N']_{\BB Z}$ one of $\eta(a_j)$ or $\eta(b_j)$ is contained in $\bdy \ol B_0 $ and the other is contained in $\eta([ -1,1])$. 

Let $k$ be the largest integer for which 
$\eta^{C^\zeta }\left(\left[- 1  ,  1       \right]\right) \subset B_{8^{-k}}(0)$. 
By the definition~\eqref{eqn-localize-set} of $\ol B_0 $, this implies that $R_{C^\zeta}^{-1}   \ol B_0  \subset B_{8^{-k+1}}(0)$. 
By conditions~\ref{item-deg-segment-upper} and~\ref{item-deg-segment-lower} in the definition of $ G_C$, we have $k \in [1 , \log_8 C^{1/\zeta}+1]_{\BB Z}$. 
Hence condition~\ref{item-deg-segment-ball} in the definition of $  G_C$ implies that for $j\in [1,N']_{\BB Z}$ the curve segment $ \eta^{C^\zeta }([a_j ,b_j])  $ contains a ball of radius at least $C^{-1/2} 8^{-k } $ which is itself contained in $B_{8^{-k+1}}(0)$.   
These balls for different choices of interval $[a,b]$ intersect only along their boundaries, so
\eqbn
\op{deg}_{\op{out}}(H_0) \leq 64 C \quad \op{on} \:   G_C .
\eqen 
Using the estimate for $\BB P[ G_C]$ from Lemma~\ref{lem-deg-segment-event}, we obtain that for $C > 1$,
\eqb \label{eqn-free-gff-deg-out}
\BB P\left[ \op{deg}_{\op{out}}(H_0) >  64 C \right] \leq C^{\frac{4}{\gamma^2} \zeta-1/\zeta + o_C(1)} .
\eqe 
Sending $\zeta \rta 0$, then multiplying this estimate $pC^{p-1}$ and integrating from 1 to $\infty$ with respect to $C$ shows that~\eqref{eqn-fixed-pt-deg-out} holds.
\end{proof}

\begin{proof}[Proof of Theorem~\ref{thm-moment}]
This follows from Propositions~\ref{prop-area-diam} and~\ref{prop-fixed-pt-deg} along with~\eqref{eqn-area-diam-localize-bound} and~\eqref{eqn-deg-localize-bound}.
\end{proof}

\section{Possible extensions}
\label{sec-extensions}

\noindent\textbf{Other discretizations of LQG.}
There is a variety of other natural random environments on $\BB C$ arising in the theory of LQG which are similar in spirit to the a priori embedding of the mated-CRT map but which do not admit analogous descriptions in terms of Brownian motion. 
For example, following~\cite[Section 1.4]{shef-kpz}, we could consider the adjacency graph of maximal dyadic squares with LQG mass at most 1. 
Alternatively, we could consider the adjacency graph of cells of space-filling SLE$_{\wt\kappa}$ parameterized by $\gamma$-LQG mass for $\wt\kappa \in (4,\infty)\setminus \{16/\gamma^2\}$.  

Theorem~\ref{thm-general-clt-uniform} together with arguments of the sort used in this paper can be used to show that the random walk in the above random environments converges to Brownian motion (i.e., Theorem~\ref{thm-rw-conv} is also true for such random environments).
To accomplish this, we would first consider the case of a 0-quantum cone (which makes hypothesis~\ref{item-hyp-resampling} true), then transfer to other GFF-type distributions using local absolute continuity. In each case the only work necessary beyond what is done in this paper is to verify the moment bound of hypothesis~\ref{item-hyp-moment}, which we expect can be done using similar arguments to the ones found in Section~\ref{sec-moment}. 
\medskip

\noindent\textbf{Other embeddings of the mated-CRT map.}
It may be possible to extend our techniques to show that one still has convergence to LQG for other embeddings of the mated-CRT map---such as Riemann uniformization, circle packing, or Smith embedding. The reason for this is that these other embeddings can be defined analogously to the Tutte embedding but using a different notion of ``discrete harmonic functions" on the mated-CRT map. Indeed, the statement that the Riemann uniformization embedding is close to the a priori embedding at large scales is equivalent to the statement that the Brownian motion on the mated-CRT map---defined by endowing each face with the conformal structure of a polygon---converges to ordinary Euclidean Brownian motion in the quenched sense. The circle packing can be seen as the Tutte embedding of a weighted version of the map with edge weights depending on the sizes of adjacent circles (see, e.g.,~\cite{dubejko-circle-packing}). See~\cite{gjn-macroscopic-circles} for some recent progress on the circle-packing of the mated-CRT map. 
The Smith embedding can be defined in terms of discrete harmonic functions on the map and its dual. 

In fact, the techniques of this paper would already allow us to treat the Smith embedding of the mated-CRT map if we knew the convergence of boundary-reflected random walk on the mated-CRT map and its dual under the a priori SLE/LQG embedding to boundary-reflected Brownian motion, i.e., if we had an analog of Theorem~\ref{thm-quenched-clt0} for reflected random walk. 
We now explain this in more detail. 

Let $\mcl G^{1/n}$ be the mated-CRT map with the disk topology as above, and let $\wh{\mcl G}^{ 1/n}$ be its planar dual. We do not include a vertex corresponding to the unbounded face, so that $\wh{\mcl G}^{ 1/n}$ is a planar map with boundary $\bdy \wh{\mcl G}^{1/n}$ consisting of the vertices of $\wh{\mcl G}^{1/n}$ whose corresponding faces have a vertex of $\bdy \mcl G^{1/n}$ on their boundaries. 

\begin{figure}[t!]
 \begin{center}
\includegraphics[scale=.65]{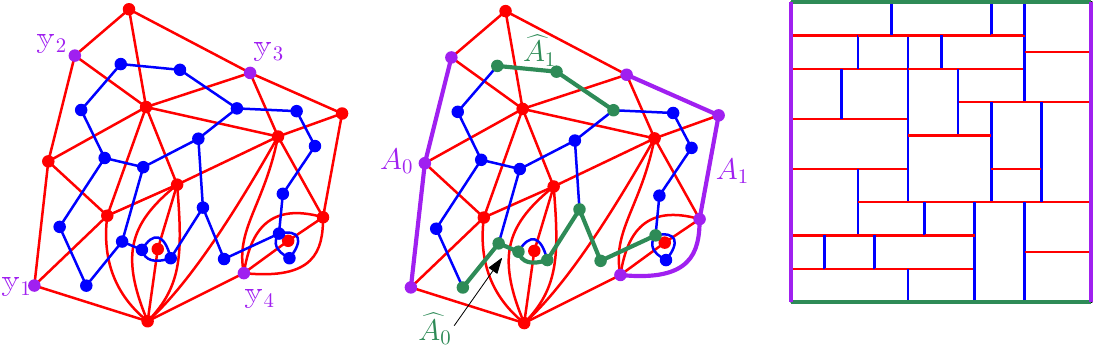}
\vspace{-0.01\textheight}
\caption{ \textbf{Left:} The mated-CRT map $\mcl G^{1/n}$ (red) and its dual $\wh{\mcl G}^{1/n}$ (blue).
\textbf{Middle:} The boundary arcs $A_0,A_1,\wh A_0,\wh A_1$ used in the definition of the Smith embedding.
\textbf{Right:} The Smith embedding of $\mcl G^{1/n}$. Each edge corresponds to a rectangle with horizontal (resp.\ vertical) coordinates determined by the hitting probabilities for simple random walk on $\mcl G^{1/n}$ (resp.\ $\wh{\mcl G}^{1/n}$) started from its endpoints. (This does not correspond to the actual Smith embedding of the map on the left). 
}\label{fig-smith}
\end{center}
\vspace{-1em}
\end{figure}

The \emph{Smith embedding} of $\mcl G^{1/n}$ associates a rectangle with each edge of $\mcl G^{1/n}$. See Figure~\ref{fig-smith} for an illustration.
To define this embedding, choose points $\BB y_1,\BB y_2,\BB y_3,\BB y_4\in\mcl V(\bdy\mcl G^{1/n})$, enumerated in counterclockwise order. Let $A_0$ (resp.\ $A_1$) be the counterclockwise boundary arc of $\bdy\mcl G^{1/n}$ from $\BB y_1$ to $\BB y_2$ (resp.\ $\BB y_3$ to $\BB y_4$). Also let $\wh A_0$ (resp.\ $\wh A_1$) be the largest arc of $\bdy \wh{\mcl G}^{1/n}$ with the property that none of its vertices correspond to faces of $\mcl G^{1/n}$ which share an edge with the counterclockwise arc of $\bdy\wh{\mcl G}^{1/n}$ from $\BB y_4$ to $\BB y_1$ (resp.\ $\BB y_2$ to $\BB y_3$). 

For an edge $e = \{x_1,x_2\} \in \mcl E\mcl G^{1/n}$, let $\wh e = \{\wh x_1 ,\wh x_2\} \in \wh{\mcl G}^{1/n}$ be the dual edge which crosses it.
For $i\in \{1,2\}$, let $\wh p_{x_i}$ (resp.\ $\wh p_{\wh x_i}$) be the probability that simple random walk on $\mcl G^{1/n}$ (resp.\ $\wh{\mcl G}^{1/n}$) started from $x_i$ (resp.\ $\wh x_i$) and reflected off $\bdy \mcl G^{1/n} \setminus (A_0 \cup A_1)$ (resp.\ $\bdy \wh{\mcl G}^{1/n} \setminus (\wh A_0 \cup \wh A_1)$) hits $A_1$ before $A_0$ (resp.\ $\wh A_1$ before $\wh A_0$). The rectangle associated with $e$ is given by
\eqbn
\left[ p_{x_1} \wedge p_{x_2}, p_{x_1} \wedge p_{x_2} \right] \times \left[ \wh p_{\wh x_1} \wedge \wh p_{\wh x_2}, \wh p_{\wh x_1} \wedge \wh p_{\wh x_2} \right] .
\eqen
One can check that under this embedding, two rectangles intersect along their horizontal (resp.\ vertical) boundaries if and only if the corresponding primal (resp.\ dual) edges share an endpoint.  

It is not hard to show using~\cite[Theorem 1.11]{gms-random-walk} and the estimates of Section~\ref{sec-moment} that under the a priori SLE/LQG embedding, the random walk on the \emph{dual} of the mated-CRT map converges to Brownian motion modulo time parametrization, in the quenched sense. 
Hence all that is needed to prove a version of Theorem~\ref{thm-tutte-conv0} for the Smith embedding is to show that the boundary-reflected random walks on the mated-CRT map and its dual converge to Brownian motion modulo time parametrization.

\medskip

\noindent\textbf{Other random planar map models.}  The results of this paper suggest a possible strategy for showing that the Tutte embeddings of other random planar maps---such as uniform triangulations, spanning tree-weighted planar maps, bipolar-orientation-decorated planar maps, etc.---converge to Liouville quantum gravity, although we emphasize that this is more speculative than the other potential extensions discussed here.

The above random planar maps $M$ are special since they can be bijectively encoded by means of a random walk on $\BB Z^2$ with certain i.i.d.\ increments~\cite{mullin-maps,bernardi-maps,shef-burger,bernardi-dfs-bijection,kmsw-bipolar}. We can use the two-dimensional variant of the KMT strong coupling theorem~\cite{kmt,zaitsev-kmt} to couple this encoding walk with the Brownian motion used to generate the mated-CRT map in such a way that the processes differ by at most $O(\log n)$ on the time interval $[-n,n]$. 
This gives us an a priori embedding of $M$ into $\BB C$ by sending the vertex corresponding to each step of the encoding walk to the space-filling SLE cell associated with the corresponding Brownian motion increment. 

One could then attempt to apply Theorem~\ref{thm-general-clt-uniform} or some variant thereof to the above embedding of $M$. 
One of the most significant (although not the only) difficulties with this approach is checking the finite expectation condition of hypothesis~\ref{item-hyp-moment}. This could potentially come from a fine analysis of the construction of the KMT coupling (to get up-to-constants, rather than $O(\log n)$, bounds at typical vertices) as well as arguments to show that the coupling error is not too strongly correlated with the Euclidean size of the SLE cells.

The above strong coupling approach is used in~\cite{ghs-map-dist,dg-lqg-dim,gm-spec-dim,gh-displacement} to deduce estimates for $M$ from estimates for the mated-CRT map, but the estimates in these works are less precise than the ones which would be needed to apply Theorem~\ref{thm-general-clt-uniform}. 
\medskip

\appendix

\section{Ergodicity for the 0-quantum cone}
\label{sec-ergodicity}

In this appendix we will prove the following statement, which is needed for the proof of ergodicity modulo scaling in Proposition~\ref{prop-0cone}. 

\begin{lem} \label{lem-cone-trivial}
Let $h$ be an embedding of the 0-quantum cone. Let $  F = F(h)$ be a bounded,  non-negative real-valued, measurable functional of $h$ which is scale and translation invariant, in the sense that, a.s., 
\eqb \label{eqn-cone-trivial-scale}
F(h(C \cdot + z)  + Q\log C ) = F(h) ,\quad \forall C > 0, \quad \forall z \in \BB C .
\eqe
Then $F$ is a.s.\ equal to a deterministic constant.  
\end{lem}

\begin{remark} \label{remark-gff-trivial}
The analog of Lemma~\ref{lem-cone-trivial} with $h$ replaced by a whole-plane GFF is essentially trivial. Indeed, if $h$ is the whole-plane GFF then we can write $h = \sum_{j=1}^\infty \xi_j \phi_j$ where the $\xi_j$'s are i.i.d.\ standard Gaussians and $\{\phi_j\}_{j\in\BB N}$ is an orthonormal basis for the Hilbert space used to define the whole-plane GFF, with each $\phi_j$ compactly supported. The whole-plane GFF analog of Lemma~\ref{lem-cone-trivial} then follows from the usual zero-one law for translation invariant events. However, there is not an obvious way to write an embedding of the 0-quantum cone as the sum of i.i.d.\ functions with compact supports, so a more complicated argument is necessary in this case. 
\end{remark} 

Before we discuss the proof of Lemma~\ref{lem-cone-trivial}, let us first record another tail triviality statement which is much easier to prove. 

\begin{lem} \label{lem-cone-tail}
The tail $\sigma$-algebra $\bigcap_{r>0} \sigma(h|_{\BB C\setminus B_r(0)})$ is trivial.
\end{lem}
\begin{proof} 
Let $R_1' := \inf\{r > 0 : h_r(0) + Q\log r = 0\}$. Note that $R_1'$ is \emph{not} the same as $R_1$ from~\eqref{eqn-mass-hit-time} since $R_1'$ is defined using an inf instead of a sup. The definition of the 0-quantum cone shows that if we condition on $R_1'$, the conditional law of $h|_{\BB C \setminus B_{R_1'}(0)}$ agrees in law with the corresponding restriction of a whole-plane GFF, normalized so that its circle average over $\bdy B_{R_1'}(0)$ is 0. 
The desired tail triviality for the 0-quantum cone therefore follows from the analogous statement for the whole-plane GFF; see, e.g.,~\cite[Lemma 2.2]{hs-euclidean}. 
\end{proof}

We now want to deduce Lemma~\ref{lem-cone-trivial} from Lemma~\ref{lem-cone-tail}. The idea of the proof is as follows.
For $z\in\BB C$, let $T_b(z) > 0$ be chosen so that the $\mu_h$-mass of the Euclidean ball $ B_{T_b(z)}(z) $ is $b$. 
By the martingale convergence theorem, for any fixed $z\in\BB C$, the conditional expectations $\BB E[F \,|\, h|_{B_{T_b(z)}(z) }]$ converge to $F$ as $b \rta \infty$. 
We want to use this to approximate $F$ by the average of $\BB E[F \,|\, h|_{B_{T_b(z)}(z) }]$ over all points $z$ in a large region. 
If $r$ is fixed, then when the region is large, the contribution to the average coming from points $z$ such that $B_{T_b(z)}(z) \cap B_r(0) \not=\emptyset$ is negligible. 
Hence if we can establish such an approximation, then we will have shown that $F \in \bigcap_{r>0} \sigma(h|_{\BB C\setminus B_r(0)})$, so we can conclude via Lemma~\ref{lem-cone-tail}. 

The main technical difficulty with this argument is that, a priori, the rate of convergence of $\BB E[F \,|\, h|_{B_{T_b(z)}(z) }]$ to $F$ as $b \rta \infty$ depends on $z$. So, it could be that when $|z|$ is large, we need to make $b = b(z)$ huge in order to say that $\BB E[F \,|\, h|_{B_{T_b(z)}(z) }]$ is close to $F$. In fact, it could be that we need to make $b$ so large that typically $T_b(z) > |z|$, which would defeat the above argument. 
To circumvent this difficulty, we will produce arbitrarily large random squares $S$ such that if $z$ is sampled uniformly from Lebesgue measure on $S$, then $(\BB C  ,h , z ,\infty)$ is again a 0-quantum cone. This allows us to choose the \emph{same} value of $b$ for every point $z$ when we take our averages. 

To produce these large random squares, we will use the concept of a \emph{dyadic system}, which leads to a convenient scale/translation invariant way of decomposing space into ``blocks". This concept was originally introduced in~\cite[Section 2.1]{gms-random-walk}. We will now review the definition.
Let $S \subset \BB C$ be a square (not necessarily dyadic). We write $|S|$ for the side length of $S$. A \emph{dyadic child} of $S$ is one of the four squares $S' \subset S$ (with $|S'| = |S|/2$) whose corners include one corner of $S$ and the center of $S$. A \emph{dyadic parent} of $S$ is one of the four squares with $S$ as a dyadic child. Note that each square has 4 dyadic parents and 4 dyadic children. A \emph{dyadic descendant} (resp.\ \emph{dyadic ancestor}) of $S$ is a square which can be obtained from $S$ by iteratively choosing dyadic children (resp.\ parents) finitely many times. 

A \emph{dyadic system} is a collection $\mcl D$ of closed squares (not necessarily dyadic) with the following properties.
\begin{enumerate}
\item If $S\in\mcl D$, then each of the four dyadic children of $S$ is in $\mcl D$.
\item If $S\in\mcl D$, then exactly one of the dyadic parents of $S$ is in $\mcl D$. 
\item Any two squares in $\mcl D$ have a common dyadic ancestor. 
\end{enumerate} 
The set of all side lengths of the squares in a dyadic system is precisely $\{2^{s+k}\}_{k\in\BB Z}$ for some $s\in [0,1]$ (determined by the system). 
If $\mcl D$ is a dyadic system, then for any $z\in \BB C$ there is a bi-infinite sequence of squares in $\mcl D$ which contain $z$. If $z$ does not lie on the boundary of a square in $\mcl D$, then this sequence is unique up to translation of the indices. If we are given any single square $S\in\mcl D$ and all of its dyadic ancestors, then $\mcl D$ is uniquely determined: $\mcl D$ is the set of all dyadic descendants of $S$ and its dyadic ancestors. This in particular allows us to define a topology on the space of dyadic systems, e.g., by looking at the local Hausdorff distance on the union of the origin-containing squares.

A \emph{uniform dyadic system} is the random dyadic system defined as follows. Let $s$ be sampled uniformly from $[0,1]$ and, conditional on $s$, let $w$ be sampled uniformly from $[0,2^s] \times [0,2^s]$. Set $S_0 := [0,2^s]\times [0,2^s] - w$. For $k \in \BB N$, inductively let $S_k$ be sampled uniformly from the four dyadic parents of $S_{k-1}$. Then let $\mcl D$ be the set of all dyadic descendants of $S_k$ for each $k\in\BB N$.

Throughout the rest of this subsection, we let $\mcl D$ be a uniform dyadic system sampled independently from $h$.
For $m > 0$ we define
\eqb \label{eqn-lqg-square}
\wh S_m := \text{largest dyadic square $S$ in $\mcl D$ containing 0 for which $\mu_h(S) \leq m$.} 
\eqe
The following lemma is the reason for introducing the uniform dyadic system in our proof. 
 
\begin{lem} \label{lem-cone-dyadic}
For $m > 0$, let $\wh S_m$ be as in~\eqref{eqn-lqg-square} and let $z_m$ be sampled uniformly from Lebesgue measure on $\wh S_m$, normalized to be a probability measure. 
Then $(\BB C , h , z_m , \infty)$ agrees in law with $(\BB C ,h , 0, \infty)$ as a quantum surface, i.e., there is a random constant $C_m > 0$ such that $h(C_m \cdot + z_m)  +Q\log C_m \eqD h$. 
\end{lem}
\begin{proof}
The proof is similar to the argument that condition 1 implies condition 5 in~\cite[Appendix A]{gms-random-walk}. 
Let $\rng h$ be a zero-boundary GFF on $\BB D$ sampled independently from $\mcl D$ and let $w$ be sampled uniformly from Lebesgue measure on $\BB D$, independently from $h$ and $\mcl D$. 
By the proof of~\cite[Proposition 4.13(ii)]{wedges}, the quantum surfaces $(\BB D , \rng h   +A , w)$ converge in the local total variation sense to $(\BB C , h , 0, \infty)$ as $A \rta \infty$. That is, for each $A >1$, there exists a random $c_A > 0$ such that for each fixed $r > 0$, the total variation distance between the laws of $(\rng h(c_A\cdot + w)   + Q\log c_A  +A )|_{B_r(0)}$ and $h|_{B_r(0)}$ tends to zero as $A\rta\infty$. 

For $A >1$ and $m  >0$, also let $\wh S_m^A(w)$ be the largest dyadic square $S$ in $\mcl D$ containing $w$ such that $\mu_{\rng h + A}(S) \leq m$. 
By~\cite[Lemma 2.1]{gms-random-walk}, $c_A^{-1}(\mcl D - w)$ is a uniform dyadic system independent from $\rng h$. 
Due to the LQG coordinate change formula, the square $\wh S_m^A(w)$ is determined by the pair $(\rng h(c_A\cdot + w)   + Q\log c_A  +A , c_A^{-1}(\mcl D - w) )$ in the same manner that $\wh S_m$ is determined by $(h , \mcl D)$. From the above local total variation convergence, it therefore follows that for each fixed $r >0$, 
\eqb \label{eqn-cone-dyadic-conv}
\left( (\rng h(c_A\cdot + w)   + Q\log c_A  +A)|_{B_r(0)} ,  \wh S_m^A(w) \right) 
\rta \left( h|_{B_r(0)} ,  \wh S_m\right) 
\eqe
in the total variation sense as $A\rta\infty$.

By~\eqref{eqn-cone-dyadic-conv}, it holds with probability tending to 1 as $A\rta\infty $ ($m$ fixed) that $\wh S_m^A(w)$ is contained in $\BB D$. 
Since $w$ is sampled uniformly from Lebesgue measure on $\BB D$, independently from $\mcl D$, it follows that the total variation distance between the conditional law of $w$ given $\left( (\rng h(c_A\cdot + w)   + Q\log c_A  +A)  ,  \wh S_m^A(w) \right) $ and the uniform measure on $\wh S_m^A(w)$ tends to zero as $A\rta\infty$. 
By combining this with~\eqref{eqn-cone-dyadic-conv}, we obtain the lemma statement.
\end{proof}

Using Lemma~\ref{lem-cone-dyadic}, we obtain the following ergodic averaging property for functionals of $h$.

\begin{lem} \label{lem-cone-ergodic}
Let $G = G(h)$ be a bounded measurable functional which is scale invariant in the sense that a.s.\ $G(h(C\cdot)  +Q\log C)) = G(h)$ for all $C>0$, equivalently $G$ is a.s.\ determined by the quantum surface $(\BB C ,h , 0,\infty)$.  
Almost surely, the limit
\eqb \label{eqn-cone-ergodic}
\lim_{m\rta\infty} \frac{1}{|\wh S_m|^2} \int_{\wh S_m} G(h(\cdot-z)) \,dz 
\eqe
exists.
\end{lem}
\begin{proof}
The proof is essentially identical to that of~\cite[Lemma 2.6]{gms-random-walk}, but we will give the proof for completeness. 
Let $\mcl F_m$ be the $\sigma$-algebra generated by the measurable functions $H = H(h,\mcl D)$ of $(h,\mcl D)$ which satisfy 
\eqb \label{eqn-dyadic-sigma-algebra}
H\left(h(C \cdot - z) + Q\log C  , C^{-1}(\mcl D-z) \right) = H( h ,   \mcl D) ,\quad \forall z\in \wh S_m ,\quad \forall C >0 .
\eqe
Then $\mcl F_m$ encodes $(h,\mcl D)$, viewed modulo translation within $\wh S_m$ and scaling, but not the location of the origin within the square $\wh S_m$.  
By Lemma~\ref{lem-cone-dyadic}, 
\eqb
\BB E\left[ G \,|\, \mcl F_m \right] = \frac{1}{|\wh S_m|^2} \int_{\wh S_m} G(h(\cdot-z)) \,dz .
\eqe
Since $\{\mcl F_m\}_{m > 0}$ is a decreasing family of $\sigma$-algebras, we can apply the backward martingale convergence theorem to get that the limit in~\eqref{eqn-cone-ergodic} exists a.s.\ and equals $\BB E\left[ G \,|\, \bigcap_{m>0}\mcl F_m \right]$. 
\end{proof}

\begin{proof}[Proof of Lemma~\ref{lem-cone-trivial}]
Throughout the proof we assume without loss of generality that the embedding $h$ is chosen so that $\mu_h(B_1(0)) =1$. 
For $b > 0$ and $z\in\BB C$, let
\eqb \label{eqn-inf-radius-def}
T_b(z) := \inf\left\{ r > 0  : \mu_h(B_r(0)) = b \right\} .
\eqe
We abbreviate $T_b = T_b(0)$, and note that our choice of embedding is equivalent to requiring $T_1=1$.   
We also let 
\eqb
\mcl S_b(z) := \left(B_{T_b}(z) , h|_{B_{T_b(z)}(z)}, z \right)  
\eqe
be the quantum surface obtained by restricting $h$ to $B_{T_b(z)}(z)$, and we abbreviate $\mcl S_b = \mcl S_b(0)$. 
\medskip

\noindent\textit{Step 1: approximating $F$ by $\BB E[F \,|\,\mcl S_b]$.}
Since $T_b \rta\infty$ a.s.\ as $b\rta\infty$, we have $\bigvee_{b > 1} \sigma(\mcl S_b) = \sigma(\BB C ,h , 0, \infty)$ up to events of probability zero.
By the scaling property of $F$ (i.e.,~\eqref{eqn-cone-trivial-scale} applied with $z =0$), $F$ is a functional of the quantum surface $(\BB C ,h , 0,\infty)$. 
By the martingale convergence theorem, a.s.\ $\BB E[F \,|\, \mcl S_b ] \rta F$ as $b\rta\infty$.
So, for each $\ep > 0$ we can find $b = b(\ep) > 1$ such that 
\eqb \label{eqn-cond-approx}
\BB E\left[ \left| F - \BB E[F \,|\,\mcl S_b]   \right|   \right] \leq \ep .
\eqe
\medskip

\noindent\textit{Step 2: approximating $F$ by $\BB E[F \,|\,\mcl S_b(z)]$ for a random $z$.} 
As in Lemma~\ref{lem-cone-dyadic}, for $m>0$ let $z_m$ be sampled from Lebesgue measure on the square $\wh S_m \in \mcl D$.
By Lemma~\ref{lem-cone-dyadic}, $(\BB C , h , z_m , \infty)$ is a 0-quantum cone. 
Hence, $(h(T_1(z_m) \cdot + z_m) + Q\log T_1(z_m) , \mcl S_b(z_m)  ) \eqD (h , \mcl S_b)$. 
By~\eqref{eqn-cone-trivial-scale}, $F(h(T_1(z_m) \cdot + z_m) + Q\log T_1(z_m) ) = F$. By~\eqref{eqn-cond-approx}, 
\eqb \label{eqn-cond-approx-k}
\BB E\left[ \left| F - \BB E[F \,|\, \mcl S_b(z_m) ]   \right| \right]  \leq \ep .
\eqe
\medskip

\noindent\textit{Step 3: approximating $F$ by a limit of averages over squares.} 
By averaging~\eqref{eqn-cond-approx-k} w.r.t.\ the law of $z_m$ (i.e., taking the conditional expectation given $(h,\mcl D)$), we get
\allb \label{eqn-cond-approx-avg}
\ep 
&\geq \BB E\left[ \BB E\left( \left| F - \BB E[F \,|\, \mcl S_b(z_m) ]   \right| \,|\, (h , \mcl D)  \right) \right] \notag\\ 
&= \BB E\left[ \frac{1}{|\wh S_m|^2} \int_{\wh S_m} \left| F - \BB E[F \,|\, \mcl S_b(z) ]   \right| \,dz  \right] \notag\\ 
&\geq \BB E\left[   \left| F -  \frac{1}{|\wh S_m|^2}\int_{\wh S_m}  \BB E[F \,|\, \mcl S_b(z) ] \,dz  \right|   \right]  ,
\alle
where $|\wh S_m|$ denotes the side length of $\wh S_m$. By Lemma~\ref{lem-cone-ergodic}, the limit 
\eqbn
\wt F := \lim_{m\rta\infty}  \frac{1}{|\wh S_m|^2}\int_{\wh S_m}  \BB E[F \,|\, \mcl S_b(z) ] \,dz
\eqen
exists a.s. By~\eqref{eqn-cond-approx-avg} and the dominated convergence theorem (recall that $F$ is bounded), 
\eqb \label{eqn-cond-approx-lim}
\BB E[|F - \wt F|] \leq \ep . 
\eqe
\medskip

\noindent\textit{Step 4: $\wt F $ is $\sigma(\mcl D)$-measurable.} 
We will now argue that $\wt F \in  \sigma(\mcl D) \vee \bigcap_{r>0} \sigma(h|_{\BB C\setminus B_r(0)}) $, so that by Lemma~\ref{lem-cone-tail} $\wt F$ is a.s.\ determined by $\mcl D$. To see this, let $\{S_k\}_{k\in\BB Z}$ be an enumeration of the origin-containing squares of $\mcl D$, in increasing order, which is chosen in a manner depending only on $\mcl D$ (e.g., we could fix the ordering by requiring that $S_0$ is the smallest square with Euclidean side length $\geq 1$). 
Then each $\wh S_m$ is one of the $S_k$'s and each $S_k$ is one of the $\wh S_m$'s, so $\wt F = \lim_{k \rta\infty}  \frac{1}{|S_k|^2}\int_{S_k}  \BB E[F \,|\, \mcl S_b(z) ] \,dz$. Using basic estimates for $\mu_h$, one easily checks that for each fixed $r>0$, the set of $z\in \BB C$ for which $B_{T_b(z)}(z) \cap B_r(0) \not=\emptyset$ has finite Lebesgue measure. This yields the formula
\eqb
 \wt F = \lim_{k \rta\infty}  \frac{1}{|S_k|^2}\int_{S_k}  \BB E[F \,|\, \mcl S_b(z) ] \BB 1_{(  B_{T_b(z)}(z) \cap B_r(0)  =\emptyset   )} \,dz .
\eqe
Therefore, $\wt F \in \sigma(\mcl D) \vee \sigma(h|_{\BB C\setminus B_r(0)})$. This holds for every $r$ and $\mcl D$ is independent from $h$, so by Lemma~\ref{lem-cone-tail}, $\wt F $ agrees a.s.\ with a $\sigma(\mcl D)$-measurable random variable. 

We know that $F$ is a functional of $h$, which is independent from $\mcl D$. By~\eqref{eqn-cond-approx-lim}, $F$ can be approximated arbitrarily closely by a random variable in $\sigma(\mcl D)$. Therefore, $F$ is a.s.\ equal to a deterministic constant.  
\end{proof}

\bibliography{cibiblong,cibib}

\newcommand{\etalchar}[1]{$^{#1}$}
\def\cprime{$'$}
\begin{thebibliography}{ABGGN16}

\bibitem[AB99]{ab-random-curves}
M.~Aizenman and A.~Burchard.
\newblock H\"older regularity and dimension bounds for random curves.
\newblock {\em Duke Math. J.}, 99(3):419--453, 1999, \arxiv{math/9801027}.
  \MR{1712629}

\bibitem[ABGGN16]{abgn-bdy}
O.~Angel, M.~T. Barlow, O.~Gurel-Gurevich, and A.~Nachmias.
\newblock Boundaries of planar graphs, via circle packings.
\newblock {\em The Annals of Probability}, 44(3):1956--1984, 2016,
  \arxiv{1311.3363}.

\bibitem[AG19]{ag-disk}
M.~{Ang} and E.~{Gwynne}.
\newblock {L}iouville quantum gravity surfaces with boundary as matings of
  trees.
\newblock {\em {Annales de l'Institut Henri Poincar{\'e}}}, to appear, 2019,
  \arxiv{1903.09120}.

\bibitem[AHNR16a]{ahnr-hyperbolic}
O.~{Angel}, T.~{Hutchcroft}, A.~{Nachmias}, and G.~{Ray}.
\newblock {Hyperbolic and Parabolic Unimodular Random Maps}.
\newblock {\em ArXiv e-prints}, December 2016, \arxiv{1612.08693}.

\bibitem[AHNR16b]{angel-hyperbolic}
O.~Angel, T.~Hutchcroft, A.~Nachmias, and G.~Ray.
\newblock Unimodular hyperbolic triangulations: Circle packing and random walk.
\newblock {\em Inventiones mathematicae}, 206(1):229--268, 2016,
  \arxiv{1501.04677}.

\bibitem[Ald91a]{aldous-crt1}
D.~Aldous.
\newblock The continuum random tree. {I}.
\newblock {\em Ann. Probab.}, 19(1):1--28, 1991. \MR{1085326 (91i:60024)}

\bibitem[Ald91b]{aldous-crt2}
D.~Aldous.
\newblock The continuum random tree. {II}. {A}n overview.
\newblock In {\em Stochastic analysis ({D}urham, 1990)}, volume 167 of {\em
  London Math. Soc. Lecture Note Ser.}, pages 23--70. Cambridge Univ. Press,
  Cambridge, 1991. \MR{1166406 (93f:60010)}

\bibitem[Ald93]{aldous-crt3}
D.~Aldous.
\newblock The continuum random tree. {III}.
\newblock {\em Ann. Probab.}, 21(1):248--289, 1993. \MR{1207226 (94c:60015)}

\bibitem[BAF16]{bf-rwre-survey}
G.~Ben~Arous and A.~Fribergh.
\newblock Biased random walks on random graphs.
\newblock In {\em Probability and statistical physics in {S}t. {P}etersburg},
  volume~91 of {\em Proc. Sympos. Pure Math.}, pages 99--153. Amer. Math. Soc.,
  Providence, RI, 2016, \arxiv{1406.5076}. \MR{3526827}

\bibitem[BC13]{benjamini-curien-uipq-walk}
I.~Benjamini and N.~Curien.
\newblock Simple random walk on the uniform infinite planar quadrangulation:
  subdiffusivity via pioneer points.
\newblock {\em Geom. Funct. Anal.}, 23(2):501--531, 2013, \arxiv{1202.5454}.
  \MR{3053754}

\bibitem[BDG20]{bdg-lqg-rw}
M.~Biskup, J.~Ding, and S.~Goswami.
\newblock Return probability and recurrence for the random walk driven by
  two-dimensional {G}aussian free field.
\newblock {\em Comm. Math. Phys.}, 373(1):45--106, 2020, \arxiv{1611.03901}.
  \MR{4050092}

\bibitem[Ber07a]{bernardi-dfs-bijection}
O.~Bernardi.
\newblock Bijective counting of {K}reweras walks and loopless triangulations.
\newblock {\em J. Combin. Theory Ser. A}, 114(5):931--956, 2007.

\bibitem[Ber07b]{bernardi-maps}
O.~Bernardi.
\newblock Bijective counting of tree-rooted maps and shuffles of parenthesis
  systems.
\newblock {\em Electron. J. Combin.}, 14(1):Research Paper 9, 36 pp.
  (electronic), 2007, \arxiv{math/0601684}. \MR{2285813 (2007m:05125)}

\bibitem[Ber15]{berestycki-lbm}
N.~Berestycki.
\newblock Diffusion in planar {L}iouville quantum gravity.
\newblock {\em Ann. Inst. Henri Poincar\'e Probab. Stat.}, 51(3):947--964,
  2015, \arxiv{1301.3356}. \MR{3365969}

\bibitem[BG20]{bg-lbm}
N.~{Berestycki} and E.~{Gwynne}.
\newblock {Random walks on mated-CRT planar maps and Liouville Brownian
  motion}.
\newblock {\em ArXiv e-prints}, March 2020, \arxiv{2003.10320}.

\bibitem[BHS18]{bhs-site-perc}
O.~{Bernardi}, N.~{Holden}, and X.~{Sun}.
\newblock {Percolation on triangulations: a bijective path to Liouville quantum
  gravity}.
\newblock {\em ArXiv e-prints}, July 2018, \arxiv{1807.01684}.

\bibitem[Bis11]{biskup-rwre-survey}
M.~Biskup.
\newblock Recent progress on the random conductance model.
\newblock {\em Probab. Surv.}, 8:294--373, 2011. \MR{2861133}

\bibitem[BS01]{benjamini-schramm-topology}
I.~Benjamini and O.~Schramm.
\newblock Recurrence of distributional limits of finite planar graphs.
\newblock {\em Electron. J. Probab.}, 6:no. 23, 13 pp. (electronic), 2001,
  \arxiv{0011019}. \MR{1873300 (2002m:82025)}

\bibitem[BSS{\etalchar{+}}40]{brooks-dissection}
R.~L. Brooks, C.~A.~B. Smith, A.~H. Stone, W.~T. Tutte, et~al.
\newblock The dissection of rectangles into squares.
\newblock {\em Duke Mathematical Journal}, 7(1):312--340, 1940.

\bibitem[Can94]{cannon-riemann-mapping}
J.~W. Cannon.
\newblock The combinatorial {R}iemann mapping theorem.
\newblock {\em Acta Mathematica}, 173(2):155--234, 1994.

\bibitem[CG20]{cg-liouville}
J.~Carmesin and A.~Georgakopoulos.
\newblock Every planar graph with the {L}iouville property is amenable.
\newblock {\em Random Structures Algorithms}, 57(3):706--729, 2020,
  \arxiv{1502.02542}. \MR{4144081}

\bibitem[CHN17]{chn-causal}
N.~{Curien}, T.~{Hutchcroft}, and A.~{Nachmias}.
\newblock {Geometric and spectral properties of causal maps}.
\newblock {\em {G}eometric and {F}unctional {A}nalysis}, to appear, 2017,
  \arxiv{1710.03137}.

\bibitem[Cur15]{curien-glimpse}
N.~Curien.
\newblock A glimpse of the conformal structure of random planar maps.
\newblock {\em Comm. Math. Phys.}, 333(3):1417--1463, 2015, \arxiv{1308.1807}.
  \MR{3302638}

\bibitem[DG18]{dg-lqg-dim}
J.~{Ding} and E.~{Gwynne}.
\newblock {The fractal dimension of {L}iouville quantum gravity: universality,
  monotonicity, and bounds}.
\newblock {\em {C}ommunications in {M}athematical {P}hysics}, 374:1877--1934,
  2018, \arxiv{1807.01072}.

\bibitem[DMS14]{wedges}
B.~{Duplantier}, J.~{Miller}, and S.~{Sheffield}.
\newblock {Liouville quantum gravity as a mating of trees}.
\newblock {\em {Asterisque}}, to appear, 2014, \arxiv{1409.7055}.

\bibitem[DS11]{shef-kpz}
B.~Duplantier and S.~Sheffield.
\newblock Liouville quantum gravity and {KPZ}.
\newblock {\em Invent. Math.}, 185(2):333--393, 2011, \arxiv{1206.0212}.
  \MR{2819163 (2012f:81251)}

\bibitem[{Dub}95]{dubejko-circle-packing}
T.~{Dubejko}.
\newblock {Random walks of circle packings}.
\newblock {\em ArXiv Mathematics e-prints}, August 1995, \arxiv{math/9508215}.

\bibitem[Geo16]{georg-poisson-bdy}
A.~Georgakopoulos.
\newblock The boundary of a square tiling of a graph coincides with the
  {P}oisson boundary.
\newblock {\em Inventiones mathematicae}, 203(3):773--821, 2016,
  \arxiv{1301.1506}.

\bibitem[GGN13]{gn-recurrence}
O.~Gurel-Gurevich and A.~Nachmias.
\newblock Recurrence of planar graph limits.
\newblock {\em Ann. of Math. (2)}, 177(2):761--781, 2013, \arxiv{1206.0707}.
  \MR{3010812}

\bibitem[GH18]{gh-displacement}
E.~{Gwynne} and T.~{Hutchcroft}.
\newblock {Anomalous diffusion of random walk on random planar maps}.
\newblock {\em {Probability and related fields}}, to appear, 2018,
  \arxiv{1807.01512}.

\bibitem[GHM20]{ghm-kpz}
E.~Gwynne, N.~Holden, and J.~Miller.
\newblock An almost sure {KPZ} relation for {SLE} and {B}rownian motion.
\newblock {\em Ann. Probab.}, 48(2):527--573, 2020, \arxiv{1512.01223}.
  \MR{4089487}

\bibitem[GHMS17]{kappa8-cov}
E.~Gwynne, N.~Holden, J.~Miller, and X.~Sun.
\newblock Brownian motion correlation in the peanosphere for {$\kappa>8$}.
\newblock {\em Ann. Inst. Henri Poincar\'e Probab. Stat.}, 53(4):1866--1889,
  2017, \arxiv{1510.04687}. \MR{3729638}

\bibitem[GHS16]{ghs-bipolar}
E.~{Gwynne}, N.~{Holden}, and X.~{Sun}.
\newblock {Joint scaling limit of a bipolar-oriented triangulation and its dual
  in the peanosphere sense}.
\newblock {\em ArXiv e-prints}, March 2016, \arxiv{1603.01194}.

\bibitem[GHS19]{ghs-dist-exponent}
E.~{Gwynne}, N.~{Holden}, and X.~{Sun}.
\newblock {A distance exponent for Liouville quantum gravity}.
\newblock {\em {Probability Theory and Related Fields}}, 173(3):931--997, 2019,
  \arxiv{1606.01214}.

\bibitem[GHS20]{ghs-map-dist}
E.~Gwynne, N.~Holden, and X.~Sun.
\newblock A mating-of-trees approach for graph distances in random planar maps.
\newblock {\em Probab. Theory Related Fields}, 177(3-4):1043--1102, 2020,
  \arxiv{1711.00723}. \MR{4126936}

\bibitem[GHSS18]{ghss-negative-moments}
C.~{Garban}, N.~{Holden}, A.~{Sep{\'u}lveda}, and X.~{Sun}.
\newblock {Negative moments for Gaussian multiplicative chaos on fractal sets}.
\newblock {\em ArXiv e-prints}, May 2018, \arxiv{1805.00864}.

\bibitem[GJN19]{gjn-macroscopic-circles}
O.~{Gurel-Gurevich}, D.~C. {Jerison}, and A.~{Nachmias}.
\newblock {A combinatorial criterion for macroscopic circles in planar
  triangulations}.
\newblock {\em ArXiv e-prints}, Jun 2019, \arxiv{1906.01612}.

\bibitem[GKMW18]{gkmw-burger}
E.~Gwynne, A.~Kassel, J.~Miller, and D.~B. Wilson.
\newblock Active {S}panning {T}rees with {B}ending {E}nergy on {P}lanar {M}aps
  and {SLE}-{D}ecorated {L}iouville {Q}uantum {G}ravity for {$\kappa > 8$}.
\newblock {\em Comm. Math. Phys.}, 358(3):1065--1115, 2018, \arxiv{1603.09722}.
  \MR{3778352}

\bibitem[GM16]{gwynne-miller-saw}
E.~{Gwynne} and J.~{Miller}.
\newblock {Convergence of the self-avoiding walk on random quadrangulations to
  SLE$_{8/3}$ on $\sqrt{8/3}$-Liouville quantum gravity}.
\newblock {\em {A}nnales de {l'ENS}}, to appear, 2016, \arxiv{1608.00956}.

\bibitem[GM17a]{gwynne-miller-char}
E.~{Gwynne} and J.~{Miller}.
\newblock {Characterizations of SLE$_{\kappa}$ for $\kappa \in (4,8)$ on
  Liouville quantum gravity}.
\newblock {\em {A}st{\'e}risque}, to appear, 2017, \arxiv{1701.05174}.

\bibitem[GM17b]{gwynne-miller-perc}
E.~{Gwynne} and J.~{Miller}.
\newblock {Convergence of percolation on uniform quadrangulations with boundary
  to SLE$_{6}$ on $\sqrt{8/3}$-Liouville quantum gravity}.
\newblock {\em {A}st{\'e}risque}, to appear, 2017, \arxiv{1701.05175}.

\bibitem[GM17c]{gm-spec-dim}
E.~{Gwynne} and J.~{Miller}.
\newblock {Random walk on random planar maps: spectral dimension, resistance,
  and displacement}.
\newblock {\em {T}he {A}nnals of {P}robability}, to appear, 2017,
  \arxiv{1711.00836}.

\bibitem[GMS18]{gms-random-walk}
E.~{Gwynne}, J.~{Miller}, and S.~{Sheffield}.
\newblock {An invariance principle for ergodic scale-free random environments}.
\newblock {\em ArXiv e-prints}, July 2018, \arxiv{1807.07515}.

\bibitem[GMS19]{gms-harmonic}
E.~{Gwynne}, J.~{Miller}, and S.~{Sheffield}.
\newblock {Harmonic functions on mated-{CRT} maps}.
\newblock {\em Electron. J. Probab.}, 24:no. 58, 55, 2019, \arxiv{1807.07511}.

\bibitem[GMS20]{gms-poisson-voronoi}
E.~Gwynne, J.~Miller, and S.~Sheffield.
\newblock The {T}utte {E}mbedding of the {P}oisson--{V}oronoi {T}essellation of
  the {B}rownian {D}isk {C}onverges to {$\sqrt{8/3}$}-{L}iouville {Q}uantum
  {G}ravity.
\newblock {\em Comm. Math. Phys.}, 374(2):735--784, 2020, \arxiv{1809.02091}.
  \MR{4072229}

\bibitem[GP19]{gp-dla}
E.~{Gwynne} and J.~{Pfeffer}.
\newblock {External diffusion limited aggregation on a spanning-tree-weighted
  random planar map}.
\newblock {\em {A}nnals of {P}robability}, to appear, 2019, \arxiv{1901.06860}.

\bibitem[GR13]{gill-rohde-type}
J.~T. Gill and S.~Rohde.
\newblock On the {R}iemann surface type of random planar maps.
\newblock {\em Rev. Mat. Iberoam.}, 29(3):1071--1090, 2013, \arxiv{1101.1320}.
  \MR{3090146}

\bibitem[GRV16]{grv-lbm}
C.~Garban, R.~Rhodes, and V.~Vargas.
\newblock Liouville {B}rownian motion.
\newblock {\em Ann. Probab.}, 44(4):3076--3110, 2016, \arxiv{1301.2876}.
  \MR{3531686}

\bibitem[HLLS18]{hlls-cut-pts}
N.~{Holden}, G.~F. {Lawler}, X.~{Li}, and X.~{Sun}.
\newblock {Minkowski content of Brownian cut points}.
\newblock {\em ArXiv e-prints}, March 2018, \arxiv{1803.10613}.

\bibitem[HLS18]{hls-sle6}
N.~{Holden}, X.~{Li}, and X.~{Sun}.
\newblock {Natural parametrization of percolation interface and pivotal
  points}.
\newblock {\em ArXiv e-prints}, April 2018, \arxiv{1804.07286}.

\bibitem[HRV18]{hrv-disk}
Y.~Huang, R.~Rhodes, and V.~Vargas.
\newblock Liouville quantum gravity on the unit disk.
\newblock {\em Ann. Inst. Henri Poincar\'{e} Probab. Stat.}, 54(3):1694--1730,
  2018, \arxiv{1502.04343}. \MR{3825895}

\bibitem[HS18]{hs-euclidean}
N.~Holden and X.~Sun.
\newblock S{LE} as a mating of trees in {E}uclidean geometry.
\newblock {\em Comm. Math. Phys.}, 364(1):171--201, 2018, \arxiv{1610.05272}.
  \MR{3861296}

\bibitem[Kah85]{kahane}
J.-P. Kahane.
\newblock Sur le chaos multiplicatif.
\newblock {\em Ann. Sci. Math. Qu\'ebec}, 9(2):105--150, 1985. \MR{829798
  (88h:60099a)}

\bibitem[KMSW19]{kmsw-bipolar}
R.~Kenyon, J.~Miller, S.~Sheffield, and D.~B. Wilson.
\newblock Bipolar orientations on planar maps and {${\rm SLE}_{12}$}.
\newblock {\em Ann. Probab.}, 47(3):1240--1269, 2019, \arxiv{1511.04068}.
  \MR{3945746}

\bibitem[KMT76]{kmt}
J.~Koml\'os, P.~Major, and G.~Tusn\'ady.
\newblock An approximation of partial sums of independent {RV}'s, and the
  sample {DF}. {II}.
\newblock {\em Z. Wahrscheinlichkeitstheorie und Verw. Gebiete}, 34(1):33--58,
  1976. \MR{0402883}

\bibitem[{Le }13]{legall-uniqueness}
J.-F. {Le Gall}.
\newblock Uniqueness and universality of the {B}rownian map.
\newblock {\em Ann. Probab.}, 41(4):2880--2960, 2013, \arxiv{1105.4842}.
  \MR{3112934}

\bibitem[{Lee}17]{lee-conformal-growth}
J.~R. {Lee}.
\newblock {Conformal growth rates and spectral geometry on distributional
  limits of graphs}.
\newblock {\em ArXiv e-prints}, January 2017, \arxiv{1701.01598}.

\bibitem[Lee18]{lee-uniformizing}
J.~R. Lee.
\newblock Discrete {U}niformizing {M}etrics on {D}istributional {L}imits of
  {S}phere {P}ackings.
\newblock {\em Geom. Funct. Anal.}, 28(4):1091--1130, 2018, \arxiv{1701.07227}.
  \MR{3820440}

\bibitem[LSW17]{lsw-schnyder-wood}
Y.~{Li}, X.~{Sun}, and S.~S. {Watson}.
\newblock {Schnyder woods, SLE(16), and Liouville quantum gravity}.
\newblock {\em ArXiv e-prints}, May 2017, \arxiv{1705.03573}.

\bibitem[{Lup}15]{lupu-loop-soup-cle}
T.~{Lupu}.
\newblock {Convergence of the two-dimensional random walk loop soup clusters to
  CLE}.
\newblock {\em ArXiv e-prints}, February 2015, 1502.06827.

\bibitem[Mie13]{miermont-brownian-map}
G.~Miermont.
\newblock The {B}rownian map is the scaling limit of uniform random plane
  quadrangulations.
\newblock {\em Acta Math.}, 210(2):319--401, 2013, \arxiv{1104.1606}.
  \MR{3070569}

\bibitem[Moo28]{moore}
R.~L. Moore.
\newblock Concerning upper semi-continuous collections of continua.
\newblock {\em Trans. Amer. Math. Soc.}, 27(4):416--428, 1928.

\bibitem[MS16a]{lqg-tbm2}
J.~{Miller} and S.~{Sheffield}.
\newblock {Liouville quantum gravity and the Brownian map II: geodesics and
  continuity of the embedding}.
\newblock {\em {Annals of Probability}}, to appear, 2016, \arxiv{1605.03563}.

\bibitem[MS16b]{lqg-tbm3}
J.~{Miller} and S.~{Sheffield}.
\newblock {Liouville quantum gravity and the Brownian map III: the conformal
  structure is determined}.
\newblock {\em {P}robability {T}heory and {R}elated {F}ields}, to appear, 2016,
  \arxiv{1608.05391}.

\bibitem[MS16c]{ig1}
J.~Miller and S.~Sheffield.
\newblock Imaginary geometry {I}: interacting {SLE}s.
\newblock {\em Probab. Theory Related Fields}, 164(3-4):553--705, 2016,
  \arxiv{1201.1496}. \MR{3477777}

\bibitem[MS16d]{ig2}
J.~Miller and S.~Sheffield.
\newblock Imaginary geometry {II}: {R}eversibility of
  {$\operatorname{SLE}_\kappa(\rho_1;\rho_2)$} for {$\kappa\in(0,4)$}.
\newblock {\em Ann. Probab.}, 44(3):1647--1722, 2016, \arxiv{1201.1497}.
  \MR{3502592}

\bibitem[MS16e]{ig3}
J.~Miller and S.~Sheffield.
\newblock Imaginary geometry {III}: reversibility of {$\mathrm{SLE}_\kappa$}
  for {$\kappa\in(4,8)$}.
\newblock {\em Ann. of Math. (2)}, 184(2):455--486, 2016, \arxiv{1201.1498}.
  \MR{3548530}

\bibitem[MS17]{ig4}
J.~Miller and S.~Sheffield.
\newblock Imaginary geometry {IV}: interior rays, whole-plane reversibility,
  and space-filling trees.
\newblock {\em Probab. Theory Related Fields}, 169(3-4):729--869, 2017,
  \arxiv{1302.4738}. \MR{3719057}

\bibitem[MS19]{sphere-constructions}
J.~Miller and S.~Sheffield.
\newblock Liouville quantum gravity spheres as matings of finite-diameter
  trees.
\newblock {\em Ann. Inst. Henri Poincar\'{e} Probab. Stat.}, 55(3):1712--1750,
  2019, \arxiv{1506.03804}. \MR{4010949}

\bibitem[MS20]{lqg-tbm1}
J.~Miller and S.~Sheffield.
\newblock Liouville quantum gravity and the {B}rownian map {I}: the {${\rm
  QLE}(8/3,0)$} metric.
\newblock {\em Invent. Math.}, 219(1):75--152, 2020, \arxiv{1507.00719}.
  \MR{4050102}

\bibitem[Mul67]{mullin-maps}
R.~C. Mullin.
\newblock On the enumeration of tree-rooted maps.
\newblock {\em Canad. J. Math.}, 19:174--183, 1967. \MR{0205882 (34 \#5708)}

\bibitem[Nak04]{nakayama-lqg}
Y.~Nakayama.
\newblock Liouville field theory: a decade after the revolution.
\newblock {\em International Journal of Modern Physics A},
  19(17n18):2771--2930, 2004.

\bibitem[RS05]{schramm-sle}
S.~Rohde and O.~Schramm.
\newblock Basic properties of {SLE}.
\newblock {\em Ann. of Math. (2)}, 161(2):883--924, 2005, \arxiv{math/0106036}.
  \MR{2153402 (2006f:60093)}

\bibitem[RV11]{rhodes-vargas-log-kpz}
R.~Rhodes and V.~Vargas.
\newblock K{PZ} formula for log-infinitely divisible multifractal random
  measures.
\newblock {\em ESAIM Probab. Stat.}, 15:358--371, 2011, \arxiv{0807.1036}.
  \MR{2870520}

\bibitem[RV14]{rhodes-vargas-review}
R.~Rhodes and V.~Vargas.
\newblock Gaussian multiplicative chaos and applications: {A} review.
\newblock {\em Probab. Surv.}, 11:315--392, 2014, \arxiv{1305.6221}.
  \MR{3274356}

\bibitem[Sch93]{schramm-square-tiling}
O.~Schramm.
\newblock Square tilings with prescribed combinatorics.
\newblock {\em Israel Journal of Mathematics}, 84(1):97--118, 1993.

\bibitem[Sch00]{schramm0}
O.~Schramm.
\newblock Scaling limits of loop-erased random walks and uniform spanning
  trees.
\newblock {\em Israel J. Math.}, 118:221--288, 2000, \arxiv{math/9904022}.
  \MR{1776084 (2001m:60227)}

\bibitem[She07]{shef-gff}
S.~Sheffield.
\newblock Gaussian free fields for mathematicians.
\newblock {\em Probab. Theory Related Fields}, 139(3-4):521--541, 2007,
  \arxiv{math/0312099}. \MR{2322706 (2008d:60120)}

\bibitem[She16a]{shef-zipper}
S.~Sheffield.
\newblock Conformal weldings of random surfaces: {SLE} and the quantum gravity
  zipper.
\newblock {\em Ann. Probab.}, 44(5):3474--3545, 2016, \arxiv{1012.4797}.
  \MR{3551203}

\bibitem[She16b]{shef-burger}
S.~Sheffield.
\newblock Quantum gravity and inventory accumulation.
\newblock {\em Ann. Probab.}, 44(6):3804--3848, 2016, \arxiv{1108.2241}.
  \MR{3572324}

\bibitem[Shi85]{shimura-cone}
M.~Shimura.
\newblock Excursions in a cone for two-dimensional {B}rownian motion.
\newblock {\em J. Math. Kyoto Univ.}, 25(3):433--443, 1985. \MR{807490
  (87a:60095)}

\bibitem[Smi01]{smirnov-cardy}
S.~Smirnov.
\newblock Critical percolation in the plane: conformal invariance, {C}ardy's
  formula, scaling limits.
\newblock {\em C. R. Acad. Sci. Paris S\'er. I Math.}, 333(3):239--244, 2001,
  \arxiv{0909.4499}. \MR{1851632 (2002f:60193)}

\bibitem[SS09]{ss-dgff}
O.~Schramm and S.~Sheffield.
\newblock Contour lines of the two-dimensional discrete {G}aussian free field.
\newblock {\em Acta Math.}, 202(1):21--137, 2009, \arxiv{math/0605337}.
  \MR{2486487 (2010f:60238)}

\bibitem[SS13]{ss-contour}
O.~Schramm and S.~Sheffield.
\newblock A contour line of the continuum {G}aussian free field.
\newblock {\em Probab. Theory Related Fields}, 157(1-2):47--80, 2013,
  \arxiv{1008.2447}. \MR{3101840}

\bibitem[Ste03]{stephenson-circle-packing}
K.~Stephenson.
\newblock Circle packing: a mathematical tale.
\newblock {\em Notices of the AMS}, 50(11):1376--1388, 2003.

\bibitem[Zai98]{zaitsev-kmt}
A.~Y. Zaitsev.
\newblock Multidimensional version of the results of {K}oml\'os, {M}ajor and
  {T}usn\'ady for vectors with finite exponential moments.
\newblock {\em ESAIM Probab. Statist.}, 2:41--108, 1998. \MR{1616527}

\end{thebibliography}

\bibliographystyle{hmralphaabbrv}

\end{document}